\documentclass [10pt, leqno]{amsart}
\usepackage {graphicx}
\usepackage{wrapfig}
\usepackage{amsmath,amssymb, amsthm}
\usepackage[colorlinks,linkcolor=blue,citecolor=blue,urlcolor=blue, linktocpage=true,dvips]{hyperref}

\usepackage{xcolor}

\definecolor{light-gray}{gray}{0.45}

\numberwithin{equation}{section}

\newtheorem*{theorem*}{Theorem}

\newtheorem{teorA}{Theorem}

\newtheorem{teorB}{Theorem}

\newtheorem{theor}{Theorem}
\newtheorem{theorem}{Theorem}[section]
\newtheorem{prop}[theorem]{Proposition}
\newtheorem{lemma}[theorem]{Lemma}
\newtheorem{corollary}[theorem]{Corollary}

\newtheorem{remark}[theorem]{Remark}

\def\acum{\mathcal{S}\mbox{\hskip-0.3pt}}

\def\P{\mathbf{P}}

\def\E{\mathbf{E}}

\def\uno{\text{\bf 1}}
\def\lcm{\text{\rm \,lcm}}
\begin{document}

\title[Distribution of gcd and lcm of $r$-tuples of integers]{On the probability distribution of the gcd and lcm of $r$-tuples of integers}

 \author{Jos\'{e} L. Fern\'{a}ndez}
 \address{Departamento de Matem\'{a}ticas, Universidad Aut\'{o}noma de Madrid, 28049-Madrid, Spain.}
\email{joseluis.fernandez@uam.es}

 \author{Pablo Fern\'{a}ndez}
 \address{Departamento de Matem\'{a}ticas, Universidad Aut\'{o}noma de Madrid, 28049-Madrid, Spain.}
\email{pablo.fernandez@uam.es}
\thanks{The research of both authors is partially supported by Fundaci\'{o}n Akusmatika. The second named author is partially supported by the Spanish Ministerio de Ciencia e Innovaci\'{o}n, project no. MTM2011-22851.}

\keywords{Distribution and moments of gcd and lcm, pairwise coprimality, waiting times.}
\date{\today}
\subjclass{11K65, 11N37, 11A25, 60E05.}

\begin{abstract}
{This paper is devoted to the study of statistical properties of the greatest common divisor and the least common multiple of random samples of positive integers.}
\end{abstract}

 \maketitle

\section{Introduction}\label{section:intro}

For  any given integer $n \ge 2$, let us denote by $X^{(n)}_1,
X^{(n)}_2, \ldots$ a sequence of independent random variables
uniformly distributed in $\{1, 2, \ldots, n\}$  and defined in a
certain given probability space endowed with a probability $\P$. For
a concrete realization we may take the unit interval with Lebesgue
measure and Borel $\sigma$-algebra as the probability space, and for
$j \ge 1$, the variable $ X^{(n)}_j$ whose value at $\omega\in[0,1]$
is 1 plus the $j$-th digit of the expansion in base~$n$ of $\omega$.

\smallskip
We are interested in studying the probability distributions of the random variables
$$
\gcd(X_1^{(n)},\dots, X_r^{(n)}) \quad\text{and}\quad \lcm(X_1^{(n)},\dots, X_r^{(n)}) \quad\text{for $r\ge 2$,}
$$
{\it i.e.}, the gcd and the lcm of random $r$-tuples of integers.

Dirichlet's basic and classical result asserts that the  proportion of coprime pairs of integers in $\{1, 2, \ldots, n\}$,
$$
\frac{1}{n^2} \#\big\{(i,j): 1\le i,j\le n;\ \gcd(i,j)=1\big\},
$$
tends to  ${1}/{\zeta(2)}={6}/{\pi^2}$ as $n$ tends to
$\infty$, which, in the probabilistic setting introduced above, reads:
$$
\lim_{n\to \infty}  \P\big(\gcd(X^{(n)}_1, X^{(n)}_2)=1\big)=\frac{1}{\zeta(2)}\, .
$$
See, for instance, \cite{HW}, Theorem 332.

\smallskip

The limiting behavior of the whole distribution of the $\gcd$ of
pairs follows immediately from the above: for each integer $k \ge
1$,
$$
\lim_{n \to \infty}\P\big(\gcd\big(X^{(n)}_1, X^{(n)}_2\big)=k\big)=\frac{1}{\zeta(2)}\frac{1}{k^2}\, .
$$

The probability distributions and moments of the gcd and the lcm of pairs of
integers are described asymptotically in the following two known theorems:
\begin{teorA}\label{theor:gcd of pairs}
{\rm a)} The mass function of the {\rm gcd} of pairs of integers satisfies
\begin{align}\label{eq:mass gcd pairs}
\P\big(\gcd(X^{(n)}_1,
X^{(n)}_2)=k\big)&=\frac{1}{\zeta(2)}\frac{1}{k^2}+O\Big(\frac{1+\ln(n/k)}{nk}\Big)\quad\text{for
$1\le k\le n$.}
\end{align}
{\rm b)} The moments of the {\rm gcd} of pairs of integers are given by
\begin{align}\label{eq:mean gcd pairs}
\E\big(\gcd(X^{(n)}_1,
X^{(n)}_2)\big)&=\frac{1}{\zeta(2)}\ln(n)+C+O\Big(\frac{\ln(n)}{\sqrt{n}}\Big);
\\
\E\big(\gcd(X^{(n)}_1,
X^{(n)}_2)^q\big)&=\frac{n^{q-1}}{(q+1)}\Big[\frac{2\zeta(q)}{\zeta(q+1)}-1\Big]+O(n^{q-2}
\ln(n))\,,\ \ \text{\rm for $q\ge 2$;}\!\!\label{eq:moments gcd pairs}
\end{align}

\end{teorA}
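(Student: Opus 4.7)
For part (a), the key reduction is that $\gcd(X_1^{(n)}, X_2^{(n)}) = k$ iff $X_i^{(n)} = k Y_i$ with $\gcd(Y_1, Y_2) = 1$ and $1 \le Y_i \le \lfloor n/k \rfloor$. This yields $\P(\gcd = k) = Q(\lfloor n/k \rfloor)/n^2$, where $Q(N)$ counts coprime pairs in $\{1,\ldots,N\}^2$. Möbius inversion gives $Q(N) = \sum_{d \le N} \mu(d) \lfloor N/d \rfloor^2$; replacing $\lfloor N/d \rfloor$ by $N/d + O(1)$, completing the series $\sum_d \mu(d)/d^2 = 1/\zeta(2)$ with tail error $O(1/N)$, and bounding the cross term via $\sum_{d \le N} 1/d \ll \ln N$, produces $Q(N) = N^2/\zeta(2) + O(N \ln N)$. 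Setting $N = \lfloor n/k \rfloor$ gives the claim (with the ``$1+$'' in the error accounting for the regime $k \asymp n$, where $\ln(n/k)$ vanishes but the $O(N)$ contribution persists).

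For part (b) (mean), start from the classical identity $\gcd(a,b) = \sum_{d \mid a,\, d \mid b} \phi(d)$, so
\[
\E\big(\gcd(X_1^{(n)}, X_2^{(n)})\big) = \frac{1}{n^2} \sum_{d=1}^n \phi(d) \lfloor n/d \rfloor^2.
\]
A naive linearization $\lfloor n/d \rfloor = n/d + O(1)$ leaves an $O(1)$ residue, which is too crude. The plan is to unfold $\phi = \mu \ast \mathrm{id}$ and apply the Dirichlet hyperbola method, splitting the resulting double sum at $ab = \sqrt{n}$: the symmetry, combined with the tail bound $\sum_{d>M} \mu(d)/d^2 = O(1/M)$ and the elementary $\sum_{d \le M} 1/d \ll \ln M$, consolidates the main term into $n^2 \ln n /\zeta(2) + C n^2$ with error $O(n^{3/2} \ln n)$. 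Dividing by $n^2$ yields the stated $O(\ln n/\sqrt{n})$ error and identifies $C$ as an explicit constant coming from $\sum_{d \ge 1} \phi(d) \chi(\text{hyperbola boundary})/d^2$.

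For the higher moments ($q \ge 2$), the analogue of Euler's identity is $a^q = \sum_{d \mid a} J_q(d)$ with $J_q = \mu \ast \mathrm{id}_q$ the Jordan totient, giving
\[
\E\big(\gcd(X_1^{(n)}, X_2^{(n)})^q\big) = \frac{1}{n^2} \sum_{d=1}^n J_q(d) \lfloor n/d \rfloor^2.
\]
Unlike the case $q=1$, the dominant contribution now comes from \emph{large} $d$ (the summand grows as $d^{q-2}$ in main order), so reindex by $v = \lfloor n/d \rfloor$ and use the summatory estimate $\sum_{d \le M} J_q(d) = M^{q+1}/((q+1)\zeta(q+1)) + O(M^q \ln M)$. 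An Abel summation collapses $\sum_{v \ge 1} v^2 \big( v^{-(q+1)} - (v+1)^{-(q+1)} \big)$ to $2\zeta(q) - \zeta(q+1)$, producing the constant $\tfrac{1}{q+1}\big[\tfrac{2\zeta(q)}{\zeta(q+1)} - 1\big]$ and, after multiplying by the error of the Jordan summatory, the claimed $O(n^{q-2} \ln n)$ remainder.

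The main obstacle I anticipate is the tight $O(\ln n/\sqrt{n})$ error in the mean: it is of Dirichlet-divisor strength and genuinely requires the hyperbola cancellation rather than a term-by-term estimate, and it forces one to track the constant term $C$ through both halves of the split sum. The higher-moment analysis is more mechanical once one recognizes that the mass concentrates at large $d$ and performs the telescoping correctly; the main danger there is mis-evaluating the telescoped series, since the appearance of both $\zeta(q)$ and $\zeta(q+1)$ in the final constant is a direct footprint of the $v^2$ weight against the $(q+1)$-st power differences.
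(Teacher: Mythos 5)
Your proposal is correct in outline, and it is worth noting that the paper itself does not prove Theorem A at all: part (a) is dismissed as following ``from the usual bounds in Dirichlet's result'' (which is exactly your argument, correctly carried out, including the observation that the $1+$ in the error absorbs the regime $k\asymp n$), and part (b) is cited to Cohen. So you are supplying proofs where the paper supplies references. Your treatment of the higher moments $q\ge 2$ is in substance identical to the paper's own Section~3 proof of the general-$r$ statement b3) specialized to $r=2$: the paper converts $\sum_j \varphi_q(j)\lfloor n/j\rfloor^2$ via the convolution--summatory identity into $\sum_j D_2(j)\,\acum{\varphi_q}(\lfloor n/j\rfloor)$ with $D_2(j)=2j-1$, and the series $\sum_j (2j-1)j^{-(q+1)}=2\zeta(q)-\zeta(q+1)$ is precisely your telescoped sum $\sum_v v^2\big(v^{-(q+1)}-(v+1)^{-(q+1)}\big)$; the two computations are the same Abel summation read in opposite directions. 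Two small technical points there: the summatory bound for the Jordan totient with $q\ge2$ is $O(M^q)$ with no logarithm (the paper's Lemma~2.1), and keeping your superfluous $\ln M$ would degrade the $q=2$ error to $O(\ln^2 n)$ rather than the stated $O(\ln n)$; and the hyperbola split is at $a\le\sqrt n$ versus $b\le\sqrt n$ with a correction term, not ``at $ab=\sqrt n$'' as you phrase it. The only genuinely incomplete piece is the mean \eqref{eq:mean gcd pairs}: you correctly diagnose that a term-by-term linearization of $\lfloor n/d\rfloor$ fails and that the $O(\ln n/\sqrt n)$ error requires the hyperbola method applied to $\sum_{dm\le n}\phi(d)(2m-1)$ (equivalently, writing $\lfloor n/d\rfloor^2=\sum_{m\le n/d}(2m-1)$ and unfolding $\phi=\mu*\mathrm{id}$), but you describe the computation rather than perform it; since this is exactly the step the paper outsources to Cohen, and the hyperbola calculation does close as you predict, I would count this as a correct plan rather than a gap, though a referee would ask you to execute it and to exhibit the constant $C$ explicitly.
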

The estimate \eqref{eq:mass gcd pairs} follows directly from the usual
bounds in Dirichlet's result. Estimates \eqref{eq:mean gcd pairs} and \eqref{eq:moments gcd pairs} appear, for instance, in a paper of Cohen (take  $g(n)=1$ if $n=1$, and $g(n)=0$ elsewhere, in the notation of
Theorem in page 168 of \cite{Co}). The constant~$C$ is recorded explicitly there.
See also~\cite{ED2004}.

\begin{teorA}\label{theor:lcm of pairs}
{\rm a)} For $0< t\le 1$, the distribution function of the {\rm lcm} of pairs
of integers satisfies:
\begin{align}\label{eq:pdf lcm pairs}
\P\big(\lcm(X^{(n)}_1, X^{(n)}_2)\le t\, n^2\big)&=1-
\frac{1}{\zeta(2)}\sum_{j=1}^{\lfloor 1/t\rfloor}\frac{1-jt
(1-\ln(jt))}{j^2} +O_t\Big(\frac{\ln(n)}{n}\Big).\!\!\!\!
\end{align}
{\rm b)} The moments of the {\rm lcm} of pairs of integers satisfy
\begin{align}
\E\big(\lcm(X^{(n)}_1,
X^{(n)}_2)^q\big)&=\frac{\zeta(q+2)}{\zeta(2)(q+1)^2}\,
n^{2q}+O(n^{2q-1} \ln(n))\,,\quad\text{\rm for $q\ge
1$;}\label{eq:moments lcm pairs}
\end{align}

\end{teorA}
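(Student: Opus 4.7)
The key identity is $\lcm(i,j) = ij/\gcd(i,j)$. Parameterize $i = da$, $j = db$ with $\gcd(a,b) = 1$, so that $d = \gcd(i,j)$ and $\lcm(i,j) = dab$. Using M\"obius inversion to decouple the coprimality constraint (via $\sum_{e \mid \gcd(a,b)} \mu(e) = [\gcd(a,b)=1]$), we obtain for any function $f$,
\[
\frac{1}{n^2}\sum_{i,j=1}^n f(\lcm(i,j)) = \frac{1}{n^2}\sum_{d \ge 1}\sum_{e \ge 1}\mu(e)\sum_{a', b' = 1}^{\lfloor n/(de)\rfloor} f(de^2 a' b').
\]
Both (a) and (b) follow by specializing $f$.

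For (b), take $f(x) = x^q$, so the inner double sum factors as $d^q e^{2q}\,S_q(\lfloor n/(de)\rfloor)^2$ with $S_q(M) = \sum_{a=1}^M a^q = M^{q+1}/(q+1) + O(M^q)$. The leading contribution is
\[
\frac{n^{2q}}{(q+1)^2}\Bigl(\sum_{d\ge 1}\frac{1}{d^{q+2}}\Bigr)\Bigl(\sum_{e\ge 1}\frac{\mu(e)}{e^2}\Bigr) = \frac{\zeta(q+2)}{\zeta(2)(q+1)^2}\,n^{2q},
\]
while the $O(M^q)$ slack and the truncation of the $e$-sum at $e \le n/d$ together contribute $O(n^{2q-1}\ln n)$ after summing in absolute value over $(d,e)$ with $de \le n$.

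For (a), let $f$ be the indicator of $[0, tn^2]$. The inner sum becomes the lattice count $N(K, L) := \#\{(a',b')\in [1,K]^2 : a'b'\le L\}$ with $K = \lfloor n/(de)\rfloor$, $L = tn^2/(de^2)$, and the key ratio $L/K^2 = td$. Comparing with the area $\int_0^K \min(K, L/x)\,dx$ gives $N(K,L) = K^2$ when $td \ge 1$ and $N(K,L) = L\bigl(1 + \ln(K^2/L)\bigr) + O(K)$ when $td < 1$. Split the $d$-sum at $\lfloor 1/t \rfloor$: the tail, together with $\sum_{e\ge 1}\mu(e)/e^2 = 1/\zeta(2)$ and $\sum_{d>\lfloor 1/t\rfloor} 1/d^2 = \zeta(2) - \sum_{j=1}^{\lfloor 1/t\rfloor}1/j^2$, produces the contribution $1 - \tfrac{1}{\zeta(2)}\sum_{j=1}^{\lfloor 1/t\rfloor} 1/j^2$; the head, using $L(1+\ln(K^2/L)) = (tn^2/(de^2))(1-\ln(td))$, produces $\tfrac{1}{\zeta(2)}\sum_{j=1}^{\lfloor 1/t\rfloor} jt(1-\ln(jt))/j^2$. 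Recombining into a single sum yields the displayed formula.

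\textbf{Main obstacle.} The delicate step is controlling the error in part (a): each pair $(d,e)$ contributes an $O(K) = O(n/(de))$ lattice-count slack, and a crude summation over $de \le n$ would produce an extra logarithmic factor. Reaching the claimed $O_t(\ln n /n)$ requires exploiting the M\"obius tail estimate $\sum_{e > E} \mu(e)/e^2 = O(1/E)$, the fact that only the finitely many $d \le 1/t$ participate in the area-approximation regime (so the sum over $d$ costs $O_t(1)$ rather than $O(\ln n)$), and the exact count $\lfloor n/(de)\rfloor^2$ in the tail (with only roundoff error). A subregime $L < K$ (occurring when $e \gtrsim tn$) must also be checked: a trivial bound $N(K,L) \le L$ summed over that range contributes only $O_t(1/n)$, absorbed in the error. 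Part (b) is analogous but cleaner, since the smooth integrand avoids any hyperbola-boundary subtleties.
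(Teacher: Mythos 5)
The paper does not actually prove Theorem B: part a) is quoted from Diaconis--Erd\"{o}s \cite{ED2004} and part b) from Ces\`{a}ro \cite{Ce1}, the text only observing that the $r=2$ case of Theorems \ref{teor:pdf lcm-r} and \ref{teor:moments lcm-r} recovers the limiting values (without error terms) via $T_2=1/\zeta(2)$ and $\Omega_2(s)=s(1-\ln s)$. Your argument is the standard direct proof, and it rests on the same decomposition the paper uses for general $r$: partition by $d=\gcd(i,j)$, strip the coprimality condition by M\"obius inversion, and use that for coprime pairs the lcm equals the product, reducing everything to lattice-point counts under hyperbolas. The identity, the main-term computations, and the recombination into the stated formula all check out, and part b) is handled correctly. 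Two details deserve care. First, the bound $N(K,L)\le L$ is false (the unrestricted count of $ab\le L$ is of order $L\ln L$); with the correct bound the subregime $e>tn$ contributes $O_t(\ln n/n)$ rather than $O_t(1/n)$, which is still absorbed. Second, the tail $d>\lfloor 1/t\rfloor$ is where the extra logarithm genuinely threatens: summing the roundoff in $\lfloor n/(de)\rfloor^2$ over all $de\le n$ costs $O(\ln^2 n/n)$, and the M\"obius tail estimate alone does not remove it. The clean fix --- which your ``recombining into a single sum'' implicitly performs --- is to invoke the exact identity $\sum_{d\ge1}\#\{(a,b)\le n/d:\gcd(a,b)=1\}=n^2$, i.e.\ to compute the complementary probability $\P(\lcm(X_1^{(n)},X_2^{(n)})>tn^2)$, so that approximate lattice-point counts are only ever summed over the finitely many $d<1/t$, each costing $O((n/d)\ln(n/d))$ and hence $O_t(n\ln n)$ in total. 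With that step made explicit, the proof is complete.
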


Observe that the bound of the error term in~\eqref{eq:pdf lcm pairs} depends on $t$. (Throughout the paper, the notation $O_t$ means that the constant in the $O$-bound depends only on $t$.)

The estimate~\eqref{eq:pdf lcm pairs} is
more involved than the corresponding result for the gcd;  it is due to Diaconis and Erd\"{o}s
\cite{ED2004}. Notice that the denominator $j^2$ in formula
\eqref{eq:pdf lcm pairs} is missing in the statement of Theorem 1 in \cite{ED2004}.

Result \eqref{eq:moments lcm pairs} can be traced back all the way to Ces\`{a}ro (see
\cite{Ce1}, page 248). See also~Theorem 2 in \cite{ED2004}.

\smallskip

In this note we will prove a number of asymptotic results (Theorem \ref{teor:pdf gcd-r} and Theorems \ref{teor:pdf lcm-r}--\ref{teor:lcm-rigual3}) concerning the probability distributions (mass distribution and moments) of the gcd and the lcm of $r$-tuples of integers, for $r\ge 3$. The case of gcd is quite direct, but not so the case of lcm, as we see later.

\smallskip
Theorem \ref{theor:gcd of pairs} can be readily extended to higher moments. It is worth recording it, as we shall use these estimates elsewhere (see \cite{FF2}).

\begin{teorB}\label{teor:pdf gcd-r}
Let $r\ge 3$.

\smallskip
\noindent {\rm a)} For $1\le k\le n$,
\begin{equation}
\P\big(\gcd(X^{(n)}_1, \dots, X^{(n)}_r)=k\big)=\frac{1}{k^r\,
\zeta(r)}+ O\Big(\frac{1}{n\, k^{r-1}}\Big).\label{eq:intro mass function gcd-r}
\end{equation}

\medskip
\noindent {\rm b)} Let $q$ be a positive integer.
\begin{itemize}
\item[{\rm b1)}]
If\/ $1\le q\le r-2$,
\begin{equation}\label{eq:intro, moments rge3, rgeq+2}
\E\big(\gcd(X_1^{(n)},\dots, X_r^{(n)})^q\big)=\frac{\zeta(r-q)}{\zeta(r)}+ O_r\Big(\frac{\ln(n)}{n}\Big).
\end{equation}
\item[{\rm b2)}] If\/ $q=r-1$,
\begin{equation}\label{eq:intro, moments rge3, reqq+1}
\E\big(\gcd\big(X^{(n)}_1, \ldots, X^{(n)}_r\big)^{r-1}\big)=\frac{\ln(n)}{\zeta(r)} +O_r(1).
\end{equation}
\item[{\rm b3)}] Finally, for $q\ge r$,
\begin{equation}\label{eq:intro, moments rge3, rleq}
\E\big(\gcd\big(X^{(n)}_1, \ldots, X^{(n)}_r\big)^{q}\big)= D_{r,q}\, n^{q-r+1} + O_{r,q}(n^{q-r} \ln(n)),
\end{equation}
where the constant $D_{r,q}$ is given by
\begin{equation*}
D_{r,q}=\frac{1}{(q+1)\,\zeta(q+1)} \sum_{k=1}^r {r\choose k} (-1)^{k+1} \, \zeta(q-r+k+1).
\end{equation*}
\end{itemize}
\end{teorB}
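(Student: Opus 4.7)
For (a), Möbius inversion on the event $\{\gcd(X_1^{(n)},\ldots,X_r^{(n)})=k\}$ yields
\[
\P(\gcd=k) = \frac{1}{n^r}\sum_{d=1}^{\lfloor n/k\rfloor}\mu(d)\,\lfloor n/(kd)\rfloor^r.
\]
Replacing $\lfloor n/(kd)\rfloor^r$ by $(n/(kd))^r + O((n/(kd))^{r-1})$, extending $\sum_d \mu(d)/d^r$ to $1/\zeta(r)$ with tail error $O((k/n)^{r-1})$, and using that $\sum_{d\ge 1} 1/d^{r-1}$ converges for $r\ge 3$, the three contributions combine to $1/(k^r\zeta(r))+O(1/(nk^{r-1}))$.

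Cases (b1) and (b2) follow by substitution into $\E(\gcd^q) = \sum_{k=1}^n k^q\,\P(\gcd=k)$, which gives
\[
\E(\gcd^q) = \frac{1}{\zeta(r)}\sum_{k=1}^n \frac{1}{k^{r-q}} + O\!\Bigl(\frac{1}{n}\sum_{k=1}^n k^{q-r+1}\Bigr).
\]
If $1\le q\le r-2$, the main sum converges to $\zeta(r-q)$ and the error is $O(\ln n/n)$ (worst at $q=r-2$), giving (b1); if $q=r-1$, the main sum is $\ln n + O(1)$ and the error is $O(1)$, giving (b2).

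The main obstacle is (b3): when $q\ge r$, the naive error term $n^{-1}\sum_k k^{q-r+1}\asymp n^{q-r+1}$ is of the same order as the would-be main term, so (a) is not sharp enough. The plan is to bypass (a) and reorganize via Jordan's totient $J_q(d)=\sum_{c|d}\mu(c)(d/c)^q$, which satisfies $\sum_{d|N}J_q(d)=N^q$; this yields
\[
\sum_{\vec a\in[1,n]^r}\gcd(\vec a)^q = \sum_{d=1}^n J_q(d)\,\lfloor n/d\rfloor^r.
\]
An Abel-type telescoping $\lfloor n/d\rfloor^r = \sum_{m=1}^{\lfloor n/d\rfloor}(m^r-(m-1)^r)$, followed by a swap of summation, rewrites this as $\sum_{m=1}^n (m^r-(m-1)^r)\,\Phi_q(\lfloor n/m\rfloor)$, where $\Phi_q(M):=\sum_{d\le M}J_q(d) = M^{q+1}/((q+1)\zeta(q+1))+O(M^q)$, obtained by inserting $J_q=\mathrm{Id}^q*\mu$ and summing power sums. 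Expanding $m^r-(m-1)^r=\sum_{k=1}^r\binom{r}{k}(-1)^{k+1}m^{r-k}$, the leading contribution is
\[
\frac{n^{q+1}}{(q+1)\zeta(q+1)}\sum_{k=1}^r \binom{r}{k}(-1)^{k+1}\sum_{m=1}^n \frac{1}{m^{q-r+k+1}},
\]
and for $q\ge r$ and $k\ge 1$ each inner exponent is at least $2$, so the $m$-sums converge to $\zeta(q-r+k+1)$ with rapidly decaying tails whose contribution, after multiplication by $n^{q+1}$, is at most $O(n^q)$. After dividing by $n^r$ this reproduces exactly $D_{r,q}\,n^{q-r+1}$. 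The remainder from $\Phi_q$ contributes $O\bigl(n^q\sum_{m\le n}m^{r-1-q}\bigr)=O(n^q\ln n)$, with the $\ln n$ arising only in the critical case $q=r$; divided by $n^r$ this gives the announced $O(n^{q-r}\ln n)$. The genuinely substantive move is the switch from Möbius inversion on $\{\gcd=k\}$ to Jordan-totient decomposition combined with Abel summation, which is what defeats the divergence of $\sum_d J_q(d)/d^r$ at $q\ge r$.
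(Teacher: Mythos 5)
Your proposal is correct and follows essentially the same route as the paper: Möbius inversion for the mass function, direct summation against it for $q\le r-1$, and for $q\ge r$ the identity $\sum_{\mathbf{x}\le n}\gcd(\mathbf{x})^q=\sum_{d\le n}\varphi_q(d)\lfloor n/d\rfloor^r$ followed by the swap $\lfloor n/d\rfloor^r=\sum_{m\le n/d}(m^r-(m-1)^r)$ and the estimate $\acum{\varphi_q}(M)=M^{q+1}/((q+1)\zeta(q+1))+O(M^q)$, which is exactly the paper's argument (your $J_q$ and $\Phi_q$ are its $\varphi_q$ and $\acum{\varphi_q}$). The error bookkeeping, including the $\ln n$ at the critical exponent, matches as well.
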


The estimate \eqref{eq:intro mass function gcd-r} is
straightforward, and it appears, with no bound on the error term, in Ces\`{a}ro (\cite{Ce3}, page 293). See also, for instance, \cite{Ch},
\cite{HS} and \cite{Ny}. For the sake of completeness, we prove Theorem \ref{teor:pdf gcd-r} in Section~\ref{section:gcd of r}, particularly of~\eqref{eq:intro, moments rge3, rleq}.

\smallskip
Observe that b1) implies in particular that the mean of the $\gcd$
of an $r$-tuple of integers in $\{1, 2\ldots, n\}$ has a finite
limit for $r \ge 3$:
$$
\lim_{n \to \infty} \E\big(\gcd\big(X^{(n)}_1, \dots,
X^{(n)}_r\big)\big)=\frac{\zeta(r-1)}{\zeta(r)}\, ,
$$
reflecting the fact that, on average, the $\gcd$ of an $r$-tuple is quite
close to 1, for moderately large~$r$. Notice also that the constant $D_{2,q}$ reduces to
$$
\frac{1}{(q+1)}\, \frac{1}{\zeta(q+1)}\,
\big(2\zeta(q)-\zeta(q+1)\big).
$$
as in \eqref{eq:moments gcd pairs}.
We should mention that the asymptotic estimates of moments of gcd above are valid also for non-integer $q$, but we limit ourselves to the integer~case.

\smallskip

The random behavior of the least common multiple of $r$-tuples  is subtler for $r\ge 3$ than for $r=2$. The random variable $\lcm\big(X^{(n)}_1, \dots,X^{(n)}_r\big)$ could be normalized in several manners; for instance, in terms of $n^r$, of its maximum possible value $\lcm(1,\dots,n)$, or in terms of the product of the $X_j$'s:
$$
\frac{\lcm\big(X^{(n)}_1, \dots,X^{(n)}_r\big)}{n^r},\quad\frac{\lcm\big(X^{(n)}_1, \dots,X^{(n)}_r\big)}{\lcm(1,\dots,n)},\quad\text{or}\quad \frac{\lcm\big(X^{(n)}_1, \dots,X^{(n)}_r\big)}{X^{(n)}_1 \cdots X_r^{(n)}}.
$$
In all three alternatives, we obtain a random variable with values in $[0,1]$.
We will focus on the first alternative (but see Propositions \ref{prop:prob_lcm_divide_by_product_r} and \ref{prop:moments_lcm_divide_by_product_r} for the third one).
Recall that Theorem~\ref{theor:lcm of pairs} claims that the sequence of variables $$
\mathcal{L}_n=\dfrac{\lcm(X^{(n)}_1, X^{(n)}_2)}{n^2}\, ,\quad n \ge 1$$ converges in distribution to a random variable $\mathcal{L}$ with values in $[0,1]$ whose complementary distribution function is given by
$$
\P(\mathcal{L} >t)=\frac{1}{\zeta(2)}\sum_{j=1}^{\lfloor 1/t\rfloor}\frac{1-jt
(1-\ln(jt))}{j^2}\, ,
$$
and whose moments are given by $\E(\mathcal{L}^q)=\frac{\zeta(q+2)}{\zeta(2) (q+1)^2}, \ q \ge 1$.

\smallskip

To state our results about the distribution function of the lcm, we introduce the following notation: for $r\ge 2$ and $s> 0$, denote by
\begin{equation}\label{eq:def of Ar}
\mathcal{A}_r(s):=\big\{(x_1,\dots, x_r): 0\le x_1,\dots, x_r\le
1\,, \ x_1\cdots x_r\le s\big\}
\end{equation}
the part of the unit positive $r$-cube where $x_1\cdots
x_r\le  s$. We write $\Omega_r(s)$ for the volume of $\mathcal{A}_r(s)$. Observe that $\Omega_r(s)=1$ for $s\ge 1$. For $s<1$,
\begin{equation}
\label{eq:formula for Omegar}
\Omega_r(s)=s\sum_{k=0}^{r-1} \frac{\ln(1/s)^k}{k!}
\end{equation}
(see Lemma \ref{lemma:volumen bajo hiperbola}).
For each $r\ge 2$, write
\begin{equation}
\label{eq:def of Tr}
T_r:=\prod_p \Big(1-\frac{1}{p}\Big)^{r-1}\, \Big(1+\frac{r-1}{p}\Big).
\end{equation}

The constant $T_r$ is the asymptotic proportion of $r$-tuples of integers that are pairwise coprime (see~\cite{CB2001} and~\cite{To2004}). Clearly, $\lim_{r\to \infty} T_r=0$; in fact, it  does so very rapidly: $\lim_{r \to \infty}T_r^{1/r} \ln(r)=e^{-\gamma}$, where $\gamma$ is Euler's constant, see \cite{Hwang}. The first values are: $T_2=1/\zeta(2)\approx 0.60793$, $T_3\approx 0.28675$, $T_4\approx 0.11488$, etc.
\begin{theor}\label{teor:pdf lcm-r} Let $r\ge 3$. Then,
for $0< t\le 1$,
\begin{align}\label{eq:intro pdf lcm rge2-conzeta}
\liminf_{n\to\infty}\P\big(\lcm\big(X^{(n)}_1, &\dots,X^{(n)}_r\big)\le t n^r\big)
\ge 1-\frac{1}{\zeta(r)}
\sum_{j=1}^\infty
\frac{1-\Omega_r(t\, j^{r-1})}{j^r}\,,
\\
\label{eq:intro pdf lcm rge2-conTr}
\limsup_{n\to\infty}\P\big(\lcm\big(X^{(n)}_1, &\dots,X^{(n)}_r\big)\le t n^r\big)
\le 1-T_r\,
\sum_{j=1}^\infty
\frac{1-\Omega_r(t\, j^{r-1})}{j^r}\,.
\end{align}

\end{theor}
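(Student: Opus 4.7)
The plan is to work with the complementary function $p_n(t) := \P(\lcm(X_1^{(n)}, \ldots, X_r^{(n)}) > t\, n^r)$; the theorem is equivalent to
$$T_r\, S(t) \le \liminf_n p_n(t) \le \limsup_n p_n(t) \le \frac{S(t)}{\zeta(r)}, \quad\text{where } S(t) := \sum_{j \ge 1} \frac{1 - \Omega_r(t j^{r-1})}{j^r}.$$
Because $\Omega_r(s) = 1$ for $s \ge 1$ by \eqref{eq:formula for Omegar}, only terms with $t j^{r-1} < 1$ are nonzero, so $S(t)$ is effectively a finite sum over $1 \le j \le J := \lfloor (1/t)^{1/(r-1)}\rfloor$.

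The first move is to condition on $d = \gcd(X_1, \ldots, X_r)$, which partitions $\{1, \ldots, n\}^r$ via the bijection $X_i = d\, Y_i$ with $Y_i \in \{1, \ldots, m\}$, $m = \lfloor n/d \rfloor$, and $\gcd(Y_1, \ldots, Y_r) = 1$. The crucial algebraic identity is
$$\lcm(X_1, \ldots, X_r) \;=\; d \cdot \lcm(Y_1, \ldots, Y_r) \;\le\; d \cdot Y_1 \cdots Y_r \;=\; \frac{X_1 \cdots X_r}{d^{r-1}},$$
with equality iff the $Y_i$ are pairwise coprime. Since $X_1 \cdots X_r \le n^r$, the event $\{X_1 \cdots X_r > t n^r d^{r-1}\}$ is empty for $d > J$, which automatically truncates the sum. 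I would therefore obtain
\begin{align*}
p_n(t) &\le \sum_{d=1}^{J} \P\big(X_1 \cdots X_r > t n^r d^{r-1},\, \gcd = d\big),\\
p_n(t) &\ge \sum_{d=1}^{J} \P\big(X_1 \cdots X_r > t n^r d^{r-1},\, X/d \text{ pairwise coprime}\big),
\end{align*}
the events in the lower bound being disjoint since pairwise coprimality of $Y=X/d$ forces $\gcd(X)=d$.

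Applying $X = dY$ turns each summand into $n^{-r}$ times a count of lattice points $Y \in \{1, \ldots, m\}^r$ satisfying the respective coprimality condition and the hyperbolic constraint $\prod Y_i > t n^r / d$; in normalized form $\prod (Y_i/m) > t n^r/(d m^r) \to t d^{r-1}$, whose complementary volume in $[0,1]^r$ is $1 - \Omega_r(t d^{r-1})$. For the upper bound, a single Möbius inversion $\uno[\gcd(Y)=1] = \sum_k \mu(k) \uno[k\mid Y_i\;\forall i]$ combined with the volume approximation yields the density $1/\zeta(r)$ of coprime tuples in the cube, so the $d$-th term tends to $(1 - \Omega_r(t d^{r-1}))/(d^r \zeta(r))$. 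For the lower bound, multi-parameter Möbius inversion
$$\uno[Y \text{ pairwise coprime}] \;=\; \sum_{(d_{ij})_{i<j}} \prod_{i<j} \mu(d_{ij})\, \uno\big[d_{ij} \mid Y_i,\; d_{ij} \mid Y_j\big]$$
extracts the Euler product $T_r$ as the density of pairwise coprime tuples (cf.~\cite{CB2001,To2004}), multiplied by the same volume factor, giving a $d$-th-term limit of $T_r(1 - \Omega_r(t d^{r-1}))/d^r$.

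Because each sum has only $J$ terms, interchanging limit and sum is immediate, and summing the term-by-term limits delivers the desired liminf and limsup bounds. The main technical burden is in the lower bound: one must show that the joint count of pairwise coprime $r$-tuples satisfying a hyperbolic product constraint factorizes asymptotically as $T_r \cdot m^r \cdot (1 - \Omega_r(\cdot))$, with an error term small enough that each $d$-summand passes to its limit. The multi-parameter nature of the Möbius sum (indexed by $\binom{r}{2}$ moduli, each generating divisibility constraints that must be bundled into $\lcm$'s before a lattice-point volume approximation is applied) makes this error analysis significantly more delicate than the single-Möbius computation for the upper bound, and is where the core work lies.
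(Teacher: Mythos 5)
Your proposal is correct and follows essentially the same route as the paper: condition on $d=\gcd$, use $\lcm(\mathbf{x})\le x_1\cdots x_r/\prod_{i<j}\gcd(x_i,x_j)\le X_1\cdots X_r/d^{r-1}$ with equality exactly on the pairwise-coprime quotients, and then count coprime (resp.\ pairwise coprime) $r$-tuples under a hyperbolic product constraint, with $1/\zeta(r)$ and $T_r$ as the respective densities. The one ingredient you flag as the "core work" --- that pairwise coprime tuples in a box with the constraint $\prod y_i\le s$ have asymptotic density $T_r\,\Omega_r(\cdot)$ --- is precisely the equidistribution statement (Lemma \ref{lemma:PC equidistributed} and \eqref{eq:equidistribution of PC in Ar}) that the paper itself imports from \cite{FF1} rather than proving here, so your sketch matches the paper's proof in both structure and level of detail.
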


As $\Omega_r(s)=1$ for $s\ge 1$, the series in \eqref{eq:intro pdf lcm rge2-conzeta} and \eqref{eq:intro pdf lcm rge2-conTr}, for each $0\le t\le 1$, are actually finite sums (the range extends to those $j$ such that $j^{r-1}\le1/t$). Notice also that the right hand side of~\eqref{eq:intro pdf lcm rge2-conzeta} is a distribution function, with value 0 as~$t\to 0$, and value 1 as $t\to 1$. The right hand side of \eqref{eq:intro pdf lcm rge2-conTr} is not a distribution function, as it takes the value $1-T_r\zeta(r)$ as $t\to 0$ (see Figure \ref{comparison case r=3} for a depiction of the case~$r=3$).

\smallskip

Setting $r=2$ in Theorem \ref{teor:pdf lcm-r}, we  recover \eqref{eq:pdf lcm pairs}, since $T_2={1}/{\zeta(2)}$ and $ \Omega_2(s)=s(1-\ln(s))$.

\medskip

For the moments of the lcm of $r$-tuples, we  prove:
\begin{theor}\label{teor:moments lcm-r}
Let $r\ge 3$. For each integer $q\ge 1$,
\begin{align}\label{eq:intro, moments lcm rge2-conzeta}
\limsup_{n\to\infty}
\frac{\E\big(\lcm\big(X^{(n)}_1, \dots,X^{(n)}_r\big)^q\big)}{n^{rq}}&\le
\frac{1}{\zeta(r)}\ \frac{\zeta(r(q+1)-q)}{(q+1)^r} \,. 
\\
\label{eq:intro, moments lcm rge2-conTr}
\liminf_{n\to\infty}
\frac{\E\big(\lcm\big(X^{(n)}_1, \dots,X^{(n)}_r\big)^q\big)}{n^{rq}}&\ge
T_r\ \frac{\zeta(r(q+1)-q)}{(q+1)^r} \,. 
\end{align}
\end{theor}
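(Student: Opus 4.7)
The plan is to sandwich $\lcm(X_1,\ldots,X_r)$ between two simpler quantities depending only on the product $X_1\cdots X_r$ and on $d := \gcd(X_1,\ldots,X_r)$, and then estimate the moments of these bounds by M\"obius and Euler-product techniques. Setting $Y_j:=X_j/d$ (so $\gcd(Y_j)=1$), one has $\lcm(X_j)=d\,\lcm(Y_j)\le d\,Y_1\cdots Y_r = X_1\cdots X_r/d^{r-1}$, with equality iff the $Y_j$ are pairwise coprime. Thus
\begin{equation*}
\frac{X_1\cdots X_r}{d^{r-1}}\,\unogrande_{\{Y_j\text{ pairwise coprime}\}} \;\le\; \lcm(X_1,\ldots,X_r) \;\le\; \frac{X_1\cdots X_r}{d^{r-1}},
\end{equation*}
so that, after raising to the $q$-th power and taking expectation, Theorem~\ref{teor:moments lcm-r} reduces to matching asymptotic estimates for the expectations of the two sides.

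For the upper bound \eqref{eq:intro, moments lcm rge2-conzeta}, the change of variables $X_j=dY_j$ (with $\gcd(Y_j)=1$, $Y_j\le\lfloor n/d\rfloor$) gives
\begin{equation*}
\E\!\left[\frac{(X_1\cdots X_r)^q}{d^{q(r-1)}}\right] = \frac{1}{n^r}\sum_{d=1}^n d^{\,q}\!\!\sum_{\substack{Y_j\le n/d\\ \gcd(Y_j)=1}}(Y_1\cdots Y_r)^q.
\end{equation*}
A M\"obius inversion on $\gcd(Y_j)=1$, together with $\sum_{z\le M}z^q=M^{q+1}/(q+1)+O(M^q)$ and $\sum_{e\ge 1}\mu(e)/e^r=1/\zeta(r)$, yields (for $r\ge 3$)
\begin{equation*}
\sum_{\substack{Y_j\le N\\ \gcd(Y_j)=1}}(Y_1\cdots Y_r)^q = \frac{N^{r(q+1)}}{(q+1)^r\,\zeta(r)}+O_r\bigl(N^{r(q+1)-1}\bigr).
\end{equation*}
Since the resulting outer exponent $r(q+1)-q=(r-1)q+r\ge 5$ for $r\ge 3$, $q\ge 1$, the sum $\sum_d d^{-(r(q+1)-q)}$ converges absolutely to $\zeta(r(q+1)-q)$ and the error contribution is $O_{r,q}(n^{rq-1})$. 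Assembling these facts gives
\begin{equation*}
\E\!\left[\frac{(X_1\cdots X_r)^q}{d^{q(r-1)}}\right] = \frac{\zeta(r(q+1)-q)}{\zeta(r)\,(q+1)^r}\,n^{rq}+O_{r,q}(n^{rq-1}),
\end{equation*}
which implies \eqref{eq:intro, moments lcm rge2-conzeta}.

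For the lower bound \eqref{eq:intro, moments lcm rge2-conTr}, the same change of variables reduces matters to the $q$-weighted count of \emph{pairwise} coprime $r$-tuples in $[1,N]^r$. Using the expansion $\unogrande_{\text{p.c.}}=\prod_{i<j}\sum_{d_{ij}\mid Y_i,Y_j}\mu(d_{ij})$, swapping summations, and writing each $Y_j=a_j z_j$ with $a_j:=\lcm_{i\neq j}d_{ij}$, one recognises the main term as an Euler product $\prod_p L_p$ times $N^{r(q+1)}/(q+1)^r$, where
\begin{equation*}
L_p = \sum_{G\subseteq K_r}\frac{(-1)^{|E(G)|}}{p^{\,k(G)}} = \Big(1-\tfrac{1}{p}\Big)^{r-1}\Big(1+\tfrac{r-1}{p}\Big),
\end{equation*}
$k(G)$ denoting the number of non-isolated vertices of $G$; this is the standard local identity for pairwise coprimality (see \cite{CB2001}, \cite{To2004}), so $\prod_p L_p = T_r$ and
\begin{equation*}
\sum_{\substack{Y_j\le N\\ Y_j\text{ p.c.}}}(Y_1\cdots Y_r)^q = \frac{T_r\,N^{r(q+1)}}{(q+1)^r}+O_r\bigl(N^{r(q+1)-1}\bigr).
\end{equation*}
Since every summand is non-negative, one may truncate the $d$-sum at $d\le D$, let $n\to\infty$ for fixed $D$, and then send $D\to\infty$, to conclude
\begin{equation*}
\liminf_{n\to\infty}\frac{\E[\lcm(X_j)^q]}{n^{rq}} \;\ge\; \frac{T_r}{(q+1)^r}\sum_{d\ge 1}\frac{1}{d^{r(q+1)-q}} \;=\; \frac{T_r\,\zeta(r(q+1)-q)}{(q+1)^r},
\end{equation*}
which is \eqref{eq:intro, moments lcm rge2-conTr}. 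The main technical obstacle is the evaluation $\prod_p L_p = T_r$, i.e.\ the combinatorial subgraph identity above, together with uniform control of the $O_r(N^{r(q+1)-1})$ error in the $d$-averaging; but both are precisely the ingredients already present behind the known pairwise coprimality density.
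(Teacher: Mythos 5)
Your proof is correct and rests on the same two\textendash sided strategy as the paper: the upper bound comes from factoring out $d=\gcd(\mathbf{x})$ and using $\lcm(\mathbf{y})\le y_1\cdots y_r$ together with a M\"obius inversion over $\gcd(\mathbf{y})=1$ (your computation reproduces the paper's Lemma \ref{lema:funcion fkr} and the ensuing estimate verbatim), and the lower bound comes from restricting to the tuples whose reduced parts are pairwise coprime, which is exactly the paper's restriction to tuples with all pairwise gcds equal to a common $k$. The one genuine difference is how the weighted pairwise\textendash coprime count $\sum_{\mathbf{y}\le N,\,\mathbf{y}\in\mathrm{PC}}(y_1\cdots y_r)^q\sim T_rN^{r(q+1)}/(q+1)^r$ is justified: you propose to derive it directly by expanding the indicator as $\prod_{i<j}\sum_{d_{ij}\mid y_i,y_j}\mu(d_{ij})$ and evaluating the local factors $\bigl(1-\tfrac1p\bigr)^{r-1}\bigl(1+\tfrac{r-1}{p}\bigr)$, whereas the paper invokes the equidistribution of PC tuples (Lemma \ref{lemma:PC equidistributed}, via \eqref{eq:equidistribution for moments}), imported from \cite{FF1} and itself proved by extending the Cai--Bach/Toth argument. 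Your route is more self-contained on paper, but the subgraph identity and, above all, the uniform control of the error over the $\binom{r}{2}$ M\"obius variables are precisely the content of that lemma, so you have relocated rather than removed the technical work; note also that for the $\liminf$ only the leading asymptotic for each fixed $d$ is needed, so your truncation at $d\le D$ (in place of the paper's dominated convergence) is a perfectly adequate, slightly more elementary, way to finish.
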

\noindent Again, for $r=2$, since $T_2=1/\zeta(2)$, we recover \eqref{eq:moments lcm pairs} of Theorem \ref{theor:lcm of pairs}.

\medskip
Theorems \ref{teor:pdf lcm-r} and \ref{teor:moments lcm-r} do tell us that
$$
\P\big(\lcm\big(X^{(n)}_1, \dots,X^{(n)}_r\big)> t n^r\big)\asymp
\sum_{j=1}^\infty
\frac{1-\Omega_r(t\, j^{r-1})}{j^r}\,,
$$
and
$$
\E\big(\lcm\big(X^{(n)}_1, \dots,X^{(n)}_r\big)^q\big)\asymp n^{rq}\,,
$$
but, asymptotically, they only provide  upper and lower estimates for the distribution function and moments of the lcm of $r$-tuples of integers in the limit $n\to\infty$. We have not been able to handle the combinatorics that would lead to establish precise asymptotic estimates for general $r$; but we have studied in detail the case~$r=3$ and obtained the precise result contained in Theorem~\ref{teor:lcm-rigual3}. To state it, we need to introduce some more notation. We will denote by $\omega(m)$ the number of \textit{distinct} prime factors of $m$, and for each $r\ge 2$ we will write $\Upsilon_r(m)$ for the arithmetic function given by~$\Upsilon_r(1)=1$ and
\begin{equation}\label{eq:denition of Delta}
\Upsilon_r(m)=\prod_{p|m} \frac{(1+(r-2)/p)}{(1+(r-1)/p)}\quad\text{for $m\ge 2$.}
\end{equation}
The function $\Upsilon_r$ is multiplicative, and $\Upsilon_r(m)<1$, for $m>1$. The case  $r=3$,
$$
\Upsilon_3(m)=\prod_{p|m}\frac{1+1/p}{1+2/p},
$$
will be of special interest. Finally, we shall denote by $J$ the Dirichlet series:
\begin{equation}\label{eq:definition of T(s)}
J(s)=\sum_{m=1}^\infty \frac{\Upsilon_3(m)\, 3^{\omega(m)}}{m^{s}}\,,   \quad \Re(s)>1\,.
\end{equation}

\begin{theor}\label{teor:lcm-rigual3}
{\rm a)} For $0\le t\le 1$,
\begin{align}\nonumber
\lim_{n\to\infty}
\P\big(\lcm\big(X^{(n)}_1, X^{(n)}_2, &\,X^{(n)}_3\big)\le t n^3\big)
\\ \label{eq:pdf lcm-rigual3}
&=1-T_3\sum_{j=1}^\infty \frac{1}{j^3} \, \sum_{m=1}^{\infty} \frac{\Upsilon_3(m)\, 3^{\omega(m)}}{m^2}\ \big(1-\Omega_3(tj^2m)\big),
\end{align}

\smallskip
{\rm b)} For integer $q\ge 1$,
\begin{equation}\label{eq:moments lcm-rigual3}
\lim_{n\to\infty}\frac{\E\big(\lcm(X^{(n)}_1,
X^{(n)}_2, X^{(n)}_3)^q\big)}{n^{3q}}=T_3\,\frac{\zeta(2q+3)}{(q+1)^3} \ J(q+2)\,,
\end{equation}
\end{theor}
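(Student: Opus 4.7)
The plan is to introduce a bijective decomposition of each triple $(x_1,x_2,x_3)$ that separates out the triple-gcd, the pairwise-gcd residues, and the pairwise-coprime cores, then to perform the asymptotic count in these new coordinates. Set $d=\gcd(x_1,x_2,x_3)$ and $x_i=dy_i$, so $\gcd(y_1,y_2,y_3)=1$; then put $a=\gcd(y_2,y_3)$, $b=\gcd(y_1,y_3)$, $c=\gcd(y_1,y_2)$ (which are then forced to be pairwise coprime); and finally write $y_1=bcy_1'$, $y_2=acy_2'$, $y_3=aby_3'$. A direct check shows that $y_1',y_2',y_3'$ must then be pairwise coprime and satisfy $\gcd(y_1',a)=\gcd(y_2',b)=\gcd(y_3',c)=1$, and conversely any septuple $(d,a,b,c,y_1',y_2',y_3')$ satisfying these constraints reconstructs a unique triple. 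Under this bijection one verifies that
\[
\lcm(x_1,x_2,x_3)=d\cdot abc\cdot y_1'y_2'y_3'.
\]

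The key local density computation is a residues-mod-$p$ enumeration: admissible triples $(y_1',y_2',y_3')$ have local density $(1-1/p)^2(1+2/p)$ at a prime $p\nmid abc$, and $(1-1/p)^2(1+1/p)$ at a prime $p\mid abc$ (exactly one of $a,b,c$ absorbs such a prime). The ratio being precisely $\Upsilon_3(p)$, the product over primes yields global density $T_3\,\Upsilon_3(m)$, where $m=abc$. For part (b), writing $M_1=n/(dbc)$, $M_2=n/(dac)$, $M_3=n/(dab)$ (so that $M_1M_2M_3=n^3/(d^3m^2)$), the inner moment sum should satisfy
\[
\sum_{\substack{y_i'\le M_i\\\text{admissible}}}(y_1'y_2'y_3')^q
\ \sim\ T_3\,\Upsilon_3(m)\,\frac{(M_1M_2M_3)^{q+1}}{(q+1)^3}
=\frac{T_3\,\Upsilon_3(m)}{(q+1)^3}\,\frac{n^{3(q+1)}}{d^{3(q+1)}m^{2(q+1)}}.
\]
Multiplying by the external weight $(dm)^q$ and summing over $d\ge1$ and over ordered pairwise-coprime factorizations $abc=m$ (which contribute the combinatorial factor $3^{\omega(m)}$, since each prime dividing $m$ may be assigned to any of the three slots) yields
\[
\frac{\E(\lcm^q)}{n^{3q}}\longrightarrow\frac{T_3}{(q+1)^3}\,\sum_{d}\frac{1}{d^{2q+3}}\,\sum_{m}\frac{\Upsilon_3(m)\,3^{\omega(m)}}{m^{q+2}}
=\frac{T_3\,\zeta(2q+3)\,J(q+2)}{(q+1)^3},
\]
which is \eqref{eq:moments lcm-rigual3}.

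For part (a) the same scheme is run with the weight $\lcm^q$ replaced by the indicator $[\lcm>tn^3]$. Setting $u_i=y_i'/M_i$, the cutoff becomes $u_1u_2u_3>td^2m$, and by \eqref{eq:formula for Omegar} the corresponding region in the unit cube has volume $1-\Omega_3(td^2m)$; the same accumulation over $d$ and $m$ then produces \eqref{eq:pdf lcm-rigual3}. The principal technical obstacle is to make the ``density times volume'' estimates uniform in $(d,a,b,c)$, with error terms summable in both $d$ and $m$ so that the limit can be passed inside the double sum. This requires M\"obius inversion of the several layers of coprimality constraints together with elementary lattice-point bounds, both for the box $\prod_i[1,M_i]$ (for part (b)) and for the hyperbolic region $\{u_1u_2u_3>s\}\subset[0,1]^3$ (for part (a)); absolute convergence of $J(s)$ for $\Re(s)>1$ controls the tail in $m$, and $\sum d^{-3}$ handles the tail in $d$ in part (a). Part (a) is the more delicate of the two, since the non-smooth cutoff $[u_1u_2u_3>s]$ introduces lattice-point errors depending on $t$ and $s$ that must still be shown negligible after the double summation, much as in the Diaconis--Erd\"{o}s treatment of the case $r=2$.
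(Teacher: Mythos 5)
Your proposal is correct and follows essentially the same route as the paper: the identical septuple factorization $(d,a,b,c,y_1',y_2',y_3')$ with $\lcm = d\,(abc)\,(y_1'y_2'y_3')$ (the paper's Lemma \ref{lemma:triple_representation}), the density $T_3\,\Upsilon_3(abc)$ for the constrained pairwise-coprime cores (the paper's Lemma \ref{lemma:PCE equidistributed}, via \eqref{eq:equidistribution of PCE in Ar} and \eqref{eq:equidistribution for moments PCE}), the factor $3^{\omega(m)}$ from ordered pairwise-coprime factorizations of $m=abc$, and dominated convergence over $(d,m)$ with the same majorants. The only difference is cosmetic: you sketch the Euler-factor computation behind the density $T_3\,\Upsilon_3(m)$, whereas the paper imports that equidistribution statement from \cite{FF1}.
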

Incidentally, observe that the case $t=0$ of \eqref{eq:pdf lcm-rigual3} implies the identity
\begin{equation}
 J(2)=\sum_{m=1}^{\infty} \frac{\Upsilon_3(m)\, 3^{\omega(m)}}{m^2}=\frac{1}{T_3\, \zeta(3)}
\end{equation}
(see Remark \ref{remark:check T(2)}).
Figure \ref{comparison case r=3} compares the exact value \eqref{eq:pdf lcm-rigual3} with the lower and upper bounds given in \eqref{eq:intro pdf lcm rge2-conzeta} and \eqref{eq:intro pdf lcm rge2-conTr} for $r=3$.
\begin{figure}
\begin{center}
\resizebox{6.5cm}{!}{\includegraphics{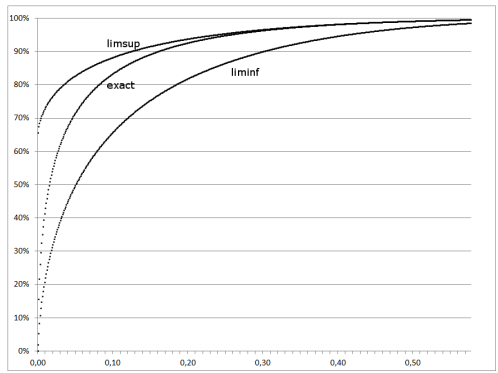}}
\end{center}
\caption{Comparison for the case $r=3$.}
\label{comparison case r=3}
\end{figure}

Finally, we  discuss certain \textit{waiting time} questions concerning sequences of successive gcds and lcms 
\begin{align*}
&z_1=x_1,\ z_2=\gcd(x_1,x_2),\ z_3=\gcd(x_1,x_2,x_3)\, \dots,
\\
&w_1=x_1,\ w_2=\lcm(x_1,x_2),\ w_3=\lcm(x_1,x_2,x_3)\, \dots,
\end{align*}
of integers $x_1,x_2,\dots$ drawn uniformly and independently from $\{1,\dots,n\}$.

The sequence $(z_j)$ decreases almost surely to 1, while the sequence $(w_j)$  increases almost surely to the number $\lcm(1,\dots,n)$. Their respective \textit{expected} waiting times are dealt with in Section~\ref{sec:waiting times}.

\smallskip
The paper is organized as follows:
we will analyze the properties of the gcd of $r$-tuples in Section~\ref{section:gcd of r}.
Section~\ref{section:lcm of r} contains the proofs of the results concerning the lcm of $r$-tuples. The particular case $r=3$ for lcm is studied in Section~\ref{lcm for r=3}. 
Finally, Section~\ref{sec:waiting times} contains the analysis of those waiting times related to gcd and~lcm.

\medskip
\textbf{Acknowledgments}. We thank Fernando Chamizo from the Universidad Aut\'{o}\-no\-ma de Madrid for some fruitful discussions.

\section{Preliminaries and notation}\label{sec:preliminaries}

Throughout the paper, for a vector $\mathbf{x}=(x_1, \ldots, x_r)$ of positive integers, we abbreviate $\gcd(\mathbf{x})=\gcd(x_1, \ldots, x_r)$ and $\lcm(\mathbf{x})=\lcm(x_1, \ldots, x_r)$. Also if $\boldsymbol{\beta}=(\beta_1, \ldots, \beta_r)$ is a vector of positive real numbers we economize and write $\mathbf{x} \le \boldsymbol{\beta}$ to mean $1 \le x_j \le \beta_j$, for $j=1, \ldots r$. Even further, for a positive number $z$ we simplify and write $\mathbf{x}\le z$ to mean that $1\le x_j \le z$, for $j=1, \ldots, r$.

\smallskip

We shall encounter a few times in what follows sums of the form
$$
\sum_{\mathbf{x}\le n} f\big(\gcd(\mathbf{x})\big),
$$
where $r\ge 1$ and $f$ is some arithmetic function. They can be readily seen to be
\begin{equation}\label{eq:Cesaro sum}
\sum_{\mathbf{x}\le n} f\big(\gcd(\mathbf{x})\big)=\sum_{j=1}^n (\mu* f)(j)\, \Big\lfloor \frac{n}{j}\Big\rfloor^r.
\end{equation}
Here, $\mu$ denotes the M\"{o}bius function and the symbol $*$ stands
for the {Dirichlet convolution}. The expression above is valid also for $r=1$, with the conventional understanding that $\gcd(j)=j$, for any integer $j\ge 1$.

Equation \eqref{eq:Cesaro sum} is sometimes referred as \textit{Ces\`{a}ro's formula} (see \cite{Ce1}, \cite{Ce3}). Its simple proof follows:

\begin{proof}[Proof of \eqref{eq:Cesaro sum}] Using the properties of the M\"{o}bius function, we have that, for any arithmetical function $F$,
\begin{equation}
\label{eq:use of mu}
\sum_{\mathbf{x}\le n, \,\gcd(\mathbf{x})=1} F(\mathbf{x})=\sum_{k=1}^n \mu(k) \sum_{\mathbf{x}\le n, \,k|\mathbf{x}} F(\mathbf{x})
=\sum_{k=1}^n \mu(k) \sum_{\mathbf{y}\le n/k} F(k\mathbf{y}).
\end{equation}
In our case,
\begin{align*}
\sum_{\mathbf{x}\le n} f\big(\gcd(\mathbf{x})\big)&=\sum_{d=1}^n f(d)\, \sum_{\mathbf{x}\le n, \gcd(\mathbf{x})=d} 1
=\sum_{d=1}^n f(d)\, \sum_{\mathbf{y}\le n/d, \gcd(\mathbf{y})=1} 1
\\
&=\sum_{kd\le n} f(d) \mu(k)\Big\lfloor\frac{n/d}{k}\Big\rfloor^r
=\sum_{j=1}^n \Big\lfloor\frac{n}{j}\Big\rfloor^r \sum_{kd=j} f(d)\, \mu(k)\,.\qedhere
\end{align*}

\end{proof}

We will use the following notation for the \textit{summatory function} $\acum{\alpha}$ of an arithmetic function $\alpha$:
$$
\acum{\alpha}(x)=\sum_{j\le x} \alpha(j)\quad\text{for each $x>0$.}
$$

The following arithmetic function:
\begin{equation}\label{eq:def de hr}
D_r(m)=m^r -(m-1)^r\,,\quad m\ge 1
\end{equation}
will appear several times in the paper; its summatory function is given by
\begin{equation}\label{eq:acumulador de hr}
\acum{D}_r(m)=m^r.
\end{equation}
Observe that $D_r(m)\le r m^{r-1}$.

For each integer $q\ge 1$, we denote  $I_q(n)=n^q$, $n \ge 1$; its summatory function satisfies
\begin{equation}\label{eq:acumulador de fq}
\acum{I_q}(m)=\sum_{k=1}^m I_q(k)=\sum_{k=1}^m k^q =\frac{m^{q+1}}{q+1} +O_q(m^q)\,.
\end{equation}

For each integer $q\ge 1$, denote by $\varphi_q$ the arithmetic function given by $\varphi_q(n)=(\mu* I_q)(n)$ (the so-called $q$-\textit{Jordan totient function}).
\begin{lemma}\label{lemma:acumulador de phiq}
For integer $q\ge 1$, the summatory function of $\varphi_q$ satisfies
\begin{equation}
\acum{\varphi_q}(n)=\begin{cases}
\dfrac{1}{(q+1)\,\zeta(q+1)}\, n^{q+1}+O_q(n^q) \quad \text{if $q \ge 2$,}
\\[9pt]
\dfrac{1}{2 \zeta(2)}\,n^2+O(n \ln n)  \quad \text{if $q=1$.}
\end{cases}
\end{equation}
\end{lemma}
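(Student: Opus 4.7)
The plan is to compute $\acum{\varphi_q}(n)$ by unwrapping the Dirichlet convolution $\varphi_q=\mu\ast I_q$, swapping the order of summation, and then inserting the known asymptotic for $\acum{I_q}$ from \eqref{eq:acumulador de fq}. Concretely, I would use the standard identity
\[
\acum{\varphi_q}(n)=\sum_{m\le n}(\mu\ast I_q)(m)=\sum_{d\le n}\mu(d)\,\acum{I_q}\!\Big(\Big\lfloor\frac{n}{d}\Big\rfloor\Big),
\]
obtained just by writing $m=de$ and summing first over $e$. This reduces the problem to controlling a weighted sum over a Möbius function whose coefficients have a clean main term.

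Next I would substitute \eqref{eq:acumulador de fq}, namely $\acum{I_q}(m)=\frac{m^{q+1}}{q+1}+O_q(m^q)$, and replace $\lfloor n/d\rfloor$ by $n/d$ at the cost of an admissible error (the binomial expansion of $(n/d+O(1))^{q+1}$ produces only terms of order $(n/d)^q$ or less, so nothing worse than the existing error term is introduced). This gives
\[
\acum{\varphi_q}(n)=\frac{n^{q+1}}{q+1}\sum_{d\le n}\frac{\mu(d)}{d^{q+1}}+O_q\!\Big(n^q\sum_{d\le n}\frac{1}{d^q}\Big).
\]
The final step is to evaluate the two sums separately depending on the size of $q$.

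For $q\ge 2$, the series $\sum_d \mu(d)/d^{q+1}$ converges absolutely to $1/\zeta(q+1)$, with tail $\sum_{d>n}\mu(d)/d^{q+1}=O(n^{-q})$; multiplied by $n^{q+1}/(q+1)$ this tail contributes $O_q(n)$, absorbed into the error. Simultaneously $\sum_{d\le n}d^{-q}=O_q(1)$, so the error term is $O_q(n^q)$, giving the first case of the lemma. For $q=1$, the corresponding Möbius series $\sum_d\mu(d)/d^2=1/\zeta(2)$ still converges with tail $O(1/n)$, contributing $O(n)$ to the main term and leaving the principal term $n^2/(2\zeta(2))$; but now $\sum_{d\le n}1/d=O(\ln n)$, so the error in the second display inflates to $O(n\ln n)$, producing the stated bound.

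No real obstacle is anticipated: the only mildly delicate point is making sure that the replacement of $\lfloor n/d\rfloor$ by $n/d$ inside $\acum{I_q}$ does not create error terms larger than those already present — which is immediate from $\acum{I_q}(m)=m^{q+1}/(q+1)+O_q(m^q)$ applied to $m=n/d+O(1)$. The dichotomy between $q\ge 2$ and $q=1$ is forced entirely by the divergence of $\sum 1/d$, which is precisely what produces the extra $\ln n$ factor.
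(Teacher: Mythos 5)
Your proposal is correct and follows essentially the same route as the paper, which simply invokes the identity $\acum{(\alpha*\beta)}(x)=\sum_{j\le x}\alpha(j)\,\acum{\beta}(x/j)$ together with \eqref{eq:acumulador de fq}; your write-up just makes explicit the tail estimates for $\sum\mu(d)/d^{q+1}$ and the $q=1$ versus $q\ge 2$ dichotomy coming from $\sum_{d\le n}d^{-q}$. All the estimates check out.
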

\noindent Notice that  $\varphi_1$ is just Euler's $\varphi$ function and that the case $q=1$ of Lemma \ref{lemma:acumulador de phiq} is just Dirichlet's theorem.
\begin{proof}
Apply the well-known expression (see \cite{Ap}, Theorem 3.10) for the summatory function of the Dirichlet convolution of two arithmetical functions $\alpha$ and $\beta$,
\begin{equation}
\label{eq:convolution accumulator}
\acum\,({\alpha*\beta})(x)=\sum_{j\le x} \alpha(j) \ \acum{\beta}\big(x/j\big)=\sum_{j\le x} \beta(j) \ \acum{\alpha}\big(x/j\big).
\end{equation}
and equation \eqref{eq:acumulador de fq}. 
\end{proof}

The following is an elementary calculus lemma which shall be useful in the discussion of distributional properties of lcm.
\begin{lemma}\label{lemma:volumen bajo hiperbola}
For each $r\ge 1$ and for all $s> 0$, denote by $\Omega_r(s)$ the volume of the
$r$-dimensional set $
\mathcal{A}_r(s):=\big\{(x_1,\dots, x_r): 0\le x_1,\dots, x_r\le
1\,, \ x_1\cdots x_r\le s\}$. Then $\Omega_r(s)=1$ for $s\ge 1$ and
\begin{equation*}
\Omega_r(s)=s\sum_{j=0}^{r-1} \frac{\ln(1/s)^j}{j!}\quad\text{for $s<1$.}
\end{equation*}
\end{lemma}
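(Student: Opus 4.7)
The plan is to prove this by induction on $r$, slicing along the last coordinate.

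The base case $r=1$ is immediate: $\mathcal{A}_1(s)=[0,s]$ for $s<1$, so $\Omega_1(s)=s$, which matches the formula (empty sum plus $s\cdot 1$). For the inductive step, I would fix $x_r\in[0,1]$ and note that the slice of $\mathcal{A}_r(s)$ at level $x_r$ is $\mathcal{A}_{r-1}(s/x_r)$ (with the convention that $\mathcal{A}_{r-1}(u)$ is the full unit cube for $u\ge 1$). Fubini then gives
\begin{equation*}
\Omega_r(s)=\int_0^1 \Omega_{r-1}\!\left(\frac{s}{x_r}\right) dx_r.
\end{equation*}
Split the integral at $x_r=s$. On $[0,s]$ the ratio $s/x_r\ge 1$ so the integrand is $1$, contributing $s$. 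On $[s,1]$ the inductive hypothesis applies:
\begin{equation*}
\Omega_{r-1}\!\left(\frac{s}{x_r}\right)=\frac{s}{x_r}\sum_{j=0}^{r-2}\frac{\ln(x_r/s)^j}{j!}.
\end{equation*}

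The main (minor) computational step is the substitution $u=\ln(x_r/s)$, $du=dx_r/x_r$, which converts the tail integral into $\int_0^{\ln(1/s)} u^j/j!\,du = \ln(1/s)^{j+1}/(j+1)!$. Shifting the summation index from $j$ to $k=j+1$ turns the tail into $s\sum_{k=1}^{r-1}\ln(1/s)^k/k!$. Adding back the head contribution $s$ (the $k=0$ term) yields the claimed formula
\begin{equation*}
\Omega_r(s)=s\sum_{k=0}^{r-1}\frac{\ln(1/s)^k}{k!}.
\end{equation*}
There is no real obstacle beyond the index bookkeeping; the cleanest presentation is to verify that the formula on the right satisfies $\Omega_r(s)=s+\int_s^1 (s/x_r)\sum_{j=0}^{r-2}\ln(x_r/s)^j/j!\,dx_r$, which is exactly what the slicing identity and inductive hypothesis yield.
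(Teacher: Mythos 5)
Your proof is correct and follows exactly the paper's approach: the paper's (one-line) proof consists of the base case $\Omega_1(s)=s$ and the slicing recursion $\Omega_r(s)=s+\int_s^1\Omega_{r-1}(s/x)\,dx$, leaving the induction and the logarithmic substitution to the reader. You have simply carried out those implicit computations, and they check out.
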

\begin{proof}
Observe that $\Omega_1(s)=s$ and that
$
\Omega_r(s)=s+\int_s^1 \Omega_{r-1}(s/x)\, dx
$ for $r\ge 2$.\end{proof}

A simple change of variables gives that if $0 \le \beta_1, \ldots, \beta_r\le 1$, then
\begin{equation}\label{eq:vol with betas}
\textrm{Vol}\big\{(x_1, \ldots, x_r): \ 0 \le x_j \le \beta_j, j=1, \ldots, r, \ x_1\cdots x_r \le s\big\}=B \,\Omega_r\big({s}/{B}\big)\, ,
\end{equation}
where $B=\prod_{j=1}^r \beta_j$. Observe that if $B\le s$, the statement is obvious.
\medskip

Later on, Section \ref{sec:waiting times}, while discussing waiting times, we shall need the standard Euler's extension to real argument of the harmonic numbers given by $H_n=\sum_{j=1}^n{1}/{j}$ for integer $n \ge 1$; and the standard approximation of $H_n$ by $\ln(n)$, which we record in the following:
\begin{lemma}\label{lemma:harmonic_numbers}
The parametric integral defined  for any real $a>0$ by
$$
H(a)=\int_0^{\infty} \big[1-(1-e^{-t})^a\big] dt
$$
satisfies
$$
H(a)=\ln(a)+\gamma+O\Big(\frac{1}{a}\Big)\, , \quad \text{as} \quad a \to \infty\, .
$$
Besides, of course, for any integer $n \ge 1$, $H(n)=H_n$.\end{lemma}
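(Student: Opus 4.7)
The plan is to first establish the identity $H(n)=H_n$ and then derive the asymptotic expansion. For the identity, I substitute $u=1-e^{-t}$, so that $du=(1-u)\,dt$ and the limits transform to $u\in[0,1]$. This gives
$$
H(a)=\int_0^1 \frac{1-u^a}{1-u}\,du.
$$
When $a=n$ is a positive integer, the factorization $1-u^n=(1-u)(1+u+\cdots+u^{n-1})$ together with termwise integration immediately yields $H(n)=\sum_{k=1}^n 1/k=H_n$.

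For the asymptotic, I perform the change of variable $s=a\,e^{-t}$ in the definition of $H(a)$, which gives
$$
H(a)=\int_0^a \frac{1-(1-s/a)^a}{s}\,ds.
$$
Comparing the integrand with its pointwise limit $(1-e^{-s})/s$, I split $H(a)=I(a)-J(a)$ where
$$
I(a)=\int_0^a \frac{1-e^{-s}}{s}\,ds,\qquad J(a)=\int_0^a \frac{(1-s/a)^a-e^{-s}}{s}\,ds.
$$
For $I(a)$, the classical integral representation of Euler's constant
$$
\gamma=\int_0^1 \frac{1-e^{-s}}{s}\,ds-\int_1^\infty \frac{e^{-s}}{s}\,ds,
$$
combined with $\int_1^a ds/s=\ln a$ and the bound $\int_a^\infty e^{-s}/s\,ds=O(e^{-a}/a)$, yields $I(a)=\ln a+\gamma+O(e^{-a}/a)$.

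The only delicate step, and the one I expect to be the main obstacle, is to show that $J(a)=O(1/a)$. I would split the integral at $s=\sqrt{a}$. On $[0,\sqrt{a}]$, the expansion $a\log(1-s/a)=-s-s^2/(2a)+O(s^3/a^2)$ leads to $(1-s/a)^a-e^{-s}=O(e^{-s}\,s^2/a)$ uniformly in that range (noting that $s^3/a^2\le s^2/a$ and $s^4/a^2\le s^2/a$ for $s\le\sqrt a$), so the integrand is $O(e^{-s}\,s/a)$ and its contribution is bounded by $(1/a)\int_0^\infty s\,e^{-s}\,ds=O(1/a)$. On $[\sqrt{a},a]$, the elementary inequality $\log(1-x)\le -x$ gives $(1-s/a)^a\le e^{-s}\le e^{-\sqrt{a}}$, so the integrand is at most $2e^{-\sqrt{a}}/s$, whose integral over $[\sqrt a,a]$ is $O(e^{-\sqrt{a}}\ln a)$, superpolynomially small. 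Assembling these pieces gives $H(a)=\ln(a)+\gamma+O(1/a)$, as required.
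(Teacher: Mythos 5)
Your proof is correct and complete. Note that the paper itself offers no proof of this lemma: it is stated as a standard fact (Euler's extension of the harmonic numbers and the usual $\ln$-approximation), so there is no argument to compare against. Your two substitutions are both right: $u=1-e^{-t}$ gives $H(a)=\int_0^1\frac{1-u^a}{1-u}\,du$ and hence $H(n)=H_n$, and $s=ae^{-t}$ gives $H(a)=\int_0^a\frac{1-(1-s/a)^a}{s}\,ds$. The treatment of $I(a)$ via the classical representation of $\gamma$ is standard, and your handling of $J(a)$ — the genuinely delicate step — is sound: on $[0,\sqrt a]$ the expansion $a\log(1-s/a)=-s-s^2/(2a)+O(s^3/a^2)$ with the exponent bounded gives $(1-s/a)^a-e^{-s}=O(e^{-s}s^2/a)$ and a contribution $O(1/a)$, while on $[\sqrt a,a]$ the inequality $0\le(1-s/a)^a\le e^{-s}\le e^{-\sqrt a}$ makes the tail superpolynomially small. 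For the record, a shorter route is to recognize $\int_0^1\frac{1-u^a}{1-u}\,du=\psi(a+1)+\gamma$, where $\psi$ is the digamma function, and quote its asymptotic expansion $\psi(a+1)=\ln a+O(1/a)$; your argument proves the same estimate from scratch, which is arguably more in the self-contained spirit the lemma deserves.
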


\section{Probability distribution of the gcd of $r$-tuples}\label{section:gcd of r}
In the present  section, we prove Theorem~\ref{teor:pdf gcd-r}, which describes the mass function and the moments of the random variable
$\gcd(X_1^{(n)},\dots, X_r^{(n)}),$ for $r\ge 3$.

\smallskip
Ces\`aro's formula \eqref{eq:Cesaro sum}, with $f=\delta_1$, where $\delta_1(n)=1$ if $n=1$ and is $0$ elsewhere, yields
\begin{equation}\label{eq:prob gcd=1 in terms of Cesaro}
\P(\gcd(X_1^{(n)},\dots, X_r^{(n)})=1) 
=\frac{1}{n^r}\sum _{j=1}^n \mu(j) \Big\lfloor\frac{n}{j}\Big\rfloor^r\, .
\end{equation}
which  readily gives that there exists a constant $C_r>0$ such that
$$
\Big|\P\big(\gcd(X^{(n)}_1, \dots, X^{(n)}_r)=1\big)-\frac{1}{\zeta(r)}\Big| \le C_r\,\frac{1}{n}\quad\text{for any integer $n \ge 1$.}
$$
As $\gcd(x_1,\dots,x_r)=k$ means that $k|x_1,\dots, k|x_r$ and
$\gcd(x_1/k,\dots,x_r/k)=1$, we deduce that there exists a constant $\widetilde C_r>0$ such that
\begin{equation*}
\Big|\P\big(\gcd\big(X^{(n)}_1,
\dots,X^{(n)}_r\big)=k\big)-\frac{1}{k^r\, \zeta(r)}\Big| \le \widetilde C_r\
\frac{1}{n\, k^{r-1}},
\end{equation*}
which is statement a) of Theorem \ref{teor:pdf gcd-r}.

\smallskip

Next, we turn to moments. 
Statements b1) and b2) are immediate consequences of
the estimate~\eqref{eq:intro mass function gcd-r}: just write
\begin{align*}
\E\big(\gcd(X_1^{(n)},\dots, X_r^{(n)})^q\big)=\frac{1}{\zeta(r)}\sum_{k=1}^n \frac{k^q}{k^r} +O_r\Big(\frac{1}{n}\sum_{k=1}^n \frac{k^q}{k^{r-1}}\Big).
\end{align*}
For $q\le r-2$, the sum above tends to $\zeta(r-q)/\zeta(r)$ as $n\to\infty$ with an error bound $O_r(1/n^{r-q-1})$, which in the worst case ($r=q+2$) is $O_r\big(1/n\big)$; while the right hand side $O$ term is, again in the worst case,   $O_r(\ln(n)/n).$

If $q=r-1$, we get that $\E\big(\gcd(X_1^{(n)},\dots,
X_r^{(n)})^{r-1}\big)=\ln(n)/{\zeta(r)} +O_r(1)$.

\medskip

b3) The argument above would not work for $q\ge r$, since the purported error term turns out to be of the same order as the main term.
Using Ces\`{a}ro's formula~\eqref{eq:Cesaro sum}, the identities \eqref{eq:acumulador de hr} and~\eqref{eq:convolution accumulator}, and Lemma~\ref{lemma:acumulador de phiq}, 
one writes
\begin{align*}
\E\big(\gcd\big(X^{(n)}_1, \ldots, &\,X^{(n)}_r\big)^{q}\big)
=\frac{1}{n^r} \sum_{j=1}^n \varphi_q(j)\, \Big\lfloor \frac{n}{j}\Big\rfloor^r
= \frac{1}{n^r}\, \sum_{j=1}^n \, D_r(j) \,\acum{\varphi_q}(\lfloor n/j\rfloor )
\\
&=\frac{1}{n^r} \,\frac{1}{(q+1)\, \zeta(q+1)}\sum_{j=1}^n D_r(j) \Big\lfloor\frac{n}{j}\Big\rfloor^{q+1}+O_q\Big(\frac{1}{n^r} \sum_{j=1}^n D_r(j) \frac{n^q}{j^q}\Big)
\\
&= \frac{n^{q-r+1}}{(q+1)\, \zeta(q+1)}\sum_{j=1}^n \frac{D_r(j)}{j^{q+1}} +O_q\Big(n^{q-r} \sum_{j=1}^n \frac{D_r(j)}{j^q}\Big)\, .
\end{align*}
As $D_r(j)\le r \, j^{r-1}$ and $q\ge r\ge 3$, this last error term is at most $O_{q,r}(n^{q-r} \ln(n))$. 
Finally, using the definition \eqref{eq:def de hr} of $D_r(j)$, and recalling again that $q\ge r$, we get that
\begin{align*}
&\sum_{j=1}^n \frac{D_r(j)}{j^{q+1}}=\sum_{j=1}^n \frac{j^r-(j-1)^r}{j^{q+1}}=\sum_{j=1}^n\frac{1}{j^{q+1}} \sum_{k=1}^r {r\choose k} (-1)^{k+1} j^{r-k}
\\
&\ \ =\sum_{k=1}^r {r\choose k} (-1)^{k+1} \sum_{j=1}^n \frac{1}{j^{q+1-r+k}}=\sum_{k=1}^r {r\choose k} (-1)^{k+1} \zeta({q-r+k+1})+O_{q,r}\Big(\frac{1}{n}\Big)\,,
\end{align*}
and this proves \eqref{eq:intro, moments rge3, rleq}.

\section{Probability distribution of the lcm of $r$-tuples}\label{section:lcm of r}
Observe that
\begin{equation}\label{eq:lcm as a product}
\lcm(a_1,\dots, a_r)=\frac{a_1\cdots a_r}{\prod_{i< j} \gcd(a_i,a_j)}\, \frac{\prod_{i< j< k} \gcd(a_i,a_j, a_k)}{\prod_{i< j< k< l} \gcd(a_i,a_j, a_k, a_l)}\cdots
\end{equation}
Notice that the lcm is the product of the numbers,
$\lcm(a_1,\dots, a_r)=a_1\cdots a_r$, if and only if they are \textit{pairwise coprime}, that is, $\gcd(a_i,a_j)=1$ for each $i\ne j$. For $r=2$, of course, there is no difference between coprimality and pairwise coprimality.

\subsection{Pairwise coprimality and equidistribution}
For a $r$-tuple of positive integers $\mathbf{x}$, we write $\mathbf{x}\in {\rm PC}$ if the $r$ components of $\mathbf{x}$ are pairwise coprime (that is, $\gcd(x_i,x_j)=1$ for $i\ne j$).
The following result was obtained by Toth and also by Cai--Bach (see \cite{To2004} and~\cite{CB2001}):
\begin{lemma}
\label{lemma:PC}For each $r\ge 2$,
\begin{equation}\label{eq:definition of Tr}
\lim_{n\to\infty} \P\big(\{X_1^{(n)}, \dots, X_r^{(n)}\}\in {\rm PC}\big)=\prod_{p} \Big(1-\frac{1}{p}\Big)^{r-1}\Big(1+\frac{r-1}{p}\Big):=T_r\,.
\end{equation}
\end{lemma}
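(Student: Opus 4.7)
The plan hinges on the local characterization: $\mathbf{x}=(x_1,\dots,x_r)$ is pairwise coprime if and only if, for every prime $p$, at most one coordinate of $\mathbf{x}$ is divisible by $p$. This makes it natural to expect the limiting probability to factor as an Euler product over primes, reproducing the formula for~$T_r$.

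First, I would unfold the pairwise-coprimality indicator via M\"obius inversion on each pair,
\[
\mathbf{1}[\mathbf{x}\in \mathrm{PC}]=\prod_{i<j}\sum_{d_{ij}\mid \gcd(x_i,x_j)}\mu(d_{ij})=\sum_{(d_{ij})_{i<j}}\Bigl(\prod_{i<j}\mu(d_{ij})\Bigr)\prod_{k=1}^r\mathbf{1}[D_k\mid x_k],
\]
where $D_k:=\lcm\{d_{ij}:k\in\{i,j\}\}$. Summing over $\mathbf{x}\le n$ and dividing by $n^r$ reduces the problem to estimating $\frac{1}{n^r}\sum_{(d_{ij})}\prod_{i<j}\mu(d_{ij})\prod_{k=1}^r\lfloor n/D_k\rfloor$. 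I would then truncate to $d_{ij}\le Y$ with a slowly growing parameter $Y=Y(n)$, and replace each $\lfloor n/D_k\rfloor$ by $n/D_k+O(1)$; the surviving main term is $\sum \prod_{i<j}\mu(d_{ij})/\prod_{k=1}^rD_k$.

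Second, since only squarefree $d_{ij}$ contribute, I would decompose $d_{ij}=\prod_p p^{\varepsilon_{ij}^{(p)}}$ with $\varepsilon_{ij}^{(p)}\in\{0,1\}$, so that $\prod_k D_k=\prod_p p^{|T(\varepsilon^{(p)})|}$, where $T(\varepsilon)\subseteq\{1,\dots,r\}$ denotes the set of indices incident to some edge $\varepsilon_{ij}=1$. The main term factors across primes as $\prod_p L_p$ with
\[
L_p=\sum_{\varepsilon\in\{0,1\}^{\binom{r}{2}}}\frac{(-1)^{|\varepsilon|}}{p^{|T(\varepsilon)|}}.
\]
Grouping by $S=T(\varepsilon)$ and using the vanishing identity $\sum_{\varepsilon\text{ supported on }S}(-1)^{|\varepsilon|}=0$ for $|S|\ge 2$, a M\"obius inversion on the Boolean lattice of subsets yields $\sum_{T(\varepsilon)=S}(-1)^{|\varepsilon|}=(-1)^{|S|}(1-|S|)$ when $|S|\ge 2$ and $0$ when $|S|=1$. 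Therefore
\[
L_p=1+\sum_{k=2}^r\binom{r}{k}(-1)^k(1-k)\,p^{-k},
\]
which a brief binomial identity shows equals $(1-1/p)^{r-1}(1+(r-1)/p)$; multiplying over $p$ produces~$T_r$.

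The main obstacle will be the error analysis: with $\binom{r}{2}$ summation indices $(d_{ij})$, both the tail of the sum over large $d_{ij}$ and the accumulated contributions of the $O(1)$-errors from the floors must be shown to be $o(n^r)$, and the interchange of $\lim_{n\to\infty}$ with the infinite product over primes must be justified. The saving observation is that $L_p=1+O_r(1/p^2)$, so $\prod_p L_p$ converges absolutely; a modest choice such as $Y=\log n$ then makes both the floor-error contribution, of size $O_r(n^{r-1}\,Y^{\binom{r}{2}})$, and the tail over $d_{ij}>Y$ negligible relative to~$n^r$.
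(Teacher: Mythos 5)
Your argument is correct, but it is not the paper's: the paper offers no proof of this lemma at all, simply citing Toth and Cai--Bach for the result, so you have supplied a self-contained derivation where the authors defer to the literature. Your route --- expanding $\prod_{i<j}\mathbf{1}[\gcd(x_i,x_j)=1]$ by M\"obius inversion on each pair, counting lattice points via $\lfloor n/D_k\rfloor$ with $D_k=\lcm\{d_{ij}:k\in\{i,j\}\}$, and factoring the resulting main term into an Euler product --- is the standard sieve proof of this statement, and your local computation checks out: the Boolean-lattice inversion giving $\sum_{T(\varepsilon)=S}(-1)^{|\varepsilon|}=(-1)^{|S|}(1-|S|)$ is right, and the identity $\binom{r-1}{k}-(r-1)\binom{r-1}{k-1}=(1-k)\binom{r}{k}$ confirms that $L_p=(1-1/p)^{r-1}(1+(r-1)/p)$, matching the heuristic that $T_r$'s local factor is the density of ``at most one coordinate divisible by $p$.'' What your approach buys over the bare citation is an explicit, quantitative error term (optimizing $Y$ gives a power-of-$\log$ saving, comparable to Toth's $O(n^{r-1}\log^{r-1}n)$). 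One small point to tighten: for the tail over $\max_{i<j} d_{ij}>Y$ you need absolute convergence of $\sum\prod_{i<j}|\mu(d_{ij})|\big/\prod_k D_k$, and your stated bound $L_p=1+O_r(1/p^2)$ concerns the signed local factor; the unsigned factor satisfies the same bound because every nonzero edge-vector $\varepsilon$ has $|T(\varepsilon)|\ge 2$, and you should say so explicitly, since that structural fact (rather than cancellation) is what makes the tail and the interchange of limits legitimate. This is a presentational gap, not a mathematical one.
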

When $r=2$, the constant  $T_2$ is $1/\zeta(2)$ and we recover \eqref{eq:intro mass function gcd-r}.

Extending the argument of Cai and Bach, one could prove that the PC $r$-tuples are, in fact, equidistributed, in the following sense (see the details in \cite{FF1}):
\begin{lemma}
\label{lemma:PC equidistributed} Fix $r\ge 2$. Then, for any function $f\in C([0,1]^r)$,
$$
\lim_{n\to\infty} \ \frac{1}{n^r} \sum_{{\mathbf{x}\le n,\, \mathbf{x}\in \text{\rm PC}}} f\Big(\frac{x_1}{n}, \dots, \frac{x_r}{n}\Big)= T_r\, \int_{[0,1]^r} f(u_1,\dots, u_r)\, du_1\cdots du_r.
$$
\end{lemma}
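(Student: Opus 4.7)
My plan is to deduce the lemma from a coordinate-wise refinement of Lemma \ref{lemma:PC}, followed by a standard approximation argument. The underlying claim is weak convergence of the empirical measures
\begin{equation*}
\nu_n := \frac{1}{n^r}\sum_{\mathbf{x}\le n,\,\mathbf{x}\in\text{\rm PC}}\delta_{\mathbf{x}/n}
\end{equation*}
on $[0,1]^r$ to $T_r$ times Lebesgue measure.

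The first step is to establish that for every $\boldsymbol{\beta}=(\beta_1,\ldots,\beta_r)\in(0,1]^r$,
\begin{equation*}
\lim_{n\to\infty}\frac{1}{n^r}\,\#\{\mathbf{x}\le n\boldsymbol{\beta}:\mathbf{x}\in\text{\rm PC}\}=T_r\,\beta_1\cdots\beta_r.
\end{equation*}
The proofs of Lemma \ref{lemma:PC} by T\'oth and by Cai--Bach begin from the M\"obius identity
\begin{equation*}
\unogrande[\mathbf{x}\in\text{\rm PC}]=\sum_{\mathbf{d}}\prod_{i<j}\mu(d_{ij})\,\unogrande[d_{ij}\mid x_i,\,d_{ij}\mid x_j],
\end{equation*}
summing $\mathbf{x}$ over $[1,n]^r$ and replacing the floors $\lfloor n/L_i(\mathbf{d})\rfloor$, with $L_i(\mathbf{d})=\lcm_{j\neq i}d_{\min(i,j),\max(i,j)}$, by $n/L_i(\mathbf{d})$ modulo a controlled error, thereby arriving at an Euler product evaluated prime-by-prime to $T_r$. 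I would rerun this computation with $n$ replaced coordinate-by-coordinate by $n\beta_i$; since the M\"obius expansion factorizes over coordinates, the main term simply acquires the extra factor $\beta_1\cdots\beta_r$ while the prime-by-prime evaluation is unchanged.

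The box limit extends by inclusion-exclusion on coordinates to every half-open rectangle $R=\prod_i(\alpha_i,\beta_i]\subseteq [0,1]^r$, giving $\nu_n(R)\to T_r\,\mathrm{Vol}(R)$. For general $f\in C([0,1]^r)$ and $\varepsilon>0$, uniform continuity yields, for $M$ large enough, step functions $g_-\le f\le g_+$ that are constant on each cube of the grid of mesh $1/M$ and satisfy $g_+ - g_- <\varepsilon$. Applying the rectangle convergence cube by cube gives $\lim_n \int g_\pm\,d\nu_n = T_r\int_{[0,1]^r}g_\pm$; sandwiching $f$ between $g_-$ and $g_+$ and letting $\varepsilon\to 0$ completes the proof.

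The only non-trivial piece is the refined box count: the T\'oth / Cai--Bach argument must be seen to be robust under coordinate-wise rescaling. Conceptually this is immediate, since the M\"obius inversion factorizes across coordinates and the tail estimate of the sum over $\mathbf{d}$ is unaffected by using per-coordinate bounds $n\beta_i$ instead of the uniform bound $n$; but the bookkeeping of the error terms must be carried out carefully, and it is exactly this that is worked out in \cite{FF1}.
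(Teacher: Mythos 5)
The paper does not actually prove this lemma: it says only that one ``extends the argument of Cai and Bach'' and refers to \cite{FF1} for the details. Your outline --- the coordinate-wise box count obtained by rerunning the M\"obius expansion with cutoffs $n\beta_i$, then rectangles by inclusion--exclusion, then a step-function sandwich for continuous $f$ --- is exactly that extension, correctly structured, and the single substantive step (the rescaled box count with its error bookkeeping) is the same step the paper itself defers to \cite{FF1}, which you defer in the same way.
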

Taking $f\equiv 1$, we recover Lemma \ref{lemma:PC}. Let us record now two applications of Lemma~\ref{lemma:PC equidistributed} which will be useful in forthcoming arguments. Fix $0< t\le 1$  and $\boldsymbol{\beta}\in\mathbb{R}^r$ with $0 \le \beta_1, \ldots, \beta_r \le 1$. Let $B=\prod_{j=1}^r \beta_j$ and define $\mathcal{B}=[0,\beta_1]\times\cdots \times [0,\beta_r]$. Choose $f=\uno_{\mathcal{B}\,\cap\, \mathcal{A}_r(t)}$, where $\mathcal{A}_r(t)$ the region given in \eqref{eq:def of Ar}.
Then, thanks to~\eqref{eq:vol with betas},
\begin{equation}
\label{eq:equidistribution of PC in Ar}
\lim_{n\to\infty}\frac{1}{n^r}\, \#\{\mathbf{x} \le n\,\boldsymbol{\beta};\mathbf{x}\in \textrm{PC}; x_1\cdots x_r\le t n^r\} = T_r\, B\, \Omega_r({t}/{B})\,,
\end{equation}
where $\Omega_r$ is the function given in \eqref{eq:formula for Omegar}.

\smallskip
For $q\ge 1$, take $f(u_1,\dots, u_r)=u_1^q\cdots u_r^q \cdot \uno_\mathcal{B}$. Then we get
\begin{equation}
\label{eq:equidistribution for moments}
\lim_{n\to\infty}\frac{1}{n^{r+qr}}\, \sum_{\substack{\mathbf{x}\le n \, \boldsymbol{\beta}\\ \mathbf{x}\in \textrm{PC}}} x_1^q\cdots x_r^q = T_r\, \frac{1}{(q+1)^r}\, B^{q+1}\,.
\end{equation}
Notice that actually the functions considered to derive these two examples are not continuous, as demanded by Lemma~\ref{lemma:PC equidistributed}, but a standard approximation argument yields the results: for instance, for \eqref{eq:equidistribution of PC in Ar}, consider
$$
f_\varepsilon=\Big(1-\dfrac{\text{dist}\big(\bullet\,,\mathcal{B}\cap\mathcal{A}_r(t)\big)}{\varepsilon}\Big)^+\, ,
$$
to let $\varepsilon \to 0$.

\smallskip

The following extension of Lemma~\ref{lemma:PC equidistributed} will be useful in Section~\ref{lcm for r=3}. Fix an $r$-tuple $\mathbf{a}=(a_1,\dots, a_r)$ $\in\textrm{PC}$. We will say that $\mathbf{x}\in \textrm{PC}_\mathbf{a}$ if the components $x_j$ are pairwise coprime and, additionally, $\gcd(x_1,a_1)=\cdots=\gcd(x_r,a_r)=1$. We refer,  again, the reader to~\cite{FF1}.

\begin{lemma}
\label{lemma:PCE equidistributed} Fix $r\ge 2$ and consider an $r$-tuple $\mathbf{a}=(a_1,\dots, a_r)\in\text{\rm PC}$. Then, for any function $f\in C([0,1]^r)$,
$$
\lim_{n\to\infty} \ \frac{1}{n^r} \sum_{{\mathbf{x}\le n,\, \mathbf{x}\in \text{\rm PC}_\mathbf{a}}} f\Big(\frac{x_1}{n}, \dots, \frac{x_r}{n}\Big)= T_r\, \, \Upsilon_r\Big(\prod_{j=1}^r a_j\Big)\int_{[0,1]^r} f(u_1,\dots, u_r)\, du_1\cdots du_r,
$$
where the function $\Upsilon_r$ is given by \eqref{eq:denition of Delta}.
\end{lemma}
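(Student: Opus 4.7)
The plan is to adapt the sieve argument underlying Lemma~\ref{lemma:PC equidistributed} by inserting local corrections at the primes dividing $A:=a_1\cdots a_r$. By the Stone--Weierstrass approximation and the mollification argument already sketched in the paper for Lemma~\ref{lemma:PC equidistributed}, it suffices to take $f=\uno_\mathcal{B}$ for the product box $\mathcal{B}=[0,\beta_1]\times\cdots\times[0,\beta_r]$; i.e., to prove
\begin{equation*}
\lim_{n\to\infty}\frac{1}{n^r}\,\#\{\mathbf{x}\le n\boldsymbol{\beta}:\mathbf{x}\in\text{PC}_\mathbf{a}\}=T_r\,\Upsilon_r(A)\,\beta_1\cdots\beta_r.
\end{equation*}
The key observation is that $\mathbf{x}\in\text{PC}_\mathbf{a}$ is a purely local condition at each prime $p$: at most one $x_j$ is divisible by $p$, and if $p\mid a_k$ for some (necessarily unique, since $\mathbf{a}\in\text{PC}$) index $k$ then $p\nmid x_k$.

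For a truncation $y>\max_j a_j$, let $\text{PC}_\mathbf{a}(y)$ be the set of $\mathbf{x}$ satisfying this local condition at every prime $p\le y$. Since the $p\mid\gcd(x_j,a_j)$-type failures cannot occur when $p>\max_j a_j$, any $\mathbf{x}\in\text{PC}_\mathbf{a}(y)\setminus\text{PC}_\mathbf{a}$ must have $p\mid\gcd(x_i,x_j)$ for some pair $i<j$ at a prime $p>y$, so
\begin{equation*}
\#\{\mathbf{x}\le n\boldsymbol{\beta}:\mathbf{x}\in\text{PC}_\mathbf{a}(y)\setminus\text{PC}_\mathbf{a}\}\le\binom{r}{2}\,n^r\sum_{p>y}\frac{1}{p^2}=O_r(n^r/y),
\end{equation*}
uniformly in $n$. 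A direct count of admissible residues modulo $p$ yields the local density of $\text{PC}_\mathbf{a}(y)$ at each prime $p\le y$: the factor $(1-1/p)^{r-1}(1+(r-1)/p)$ when $p\nmid A$ (the standard PC factor), and, when $p\mid a_k$, the modified factor $(1-1/p)^{r-1}(1+(r-2)/p)$, arising because $x_k$ is forced into a unit class mod $p$ while at most one of the remaining $r-1$ coordinates may vanish mod $p$. The ratio between these two local factors is exactly $\Upsilon_r(p)$ from~\eqref{eq:denition of Delta}, so by the Chinese remainder theorem
\begin{equation*}
\text{density}(\text{PC}_\mathbf{a}(y))=\prod_{\substack{p\le y\\ p\nmid A}}\big(1-\tfrac{1}{p}\big)^{r-1}\big(1+\tfrac{r-1}{p}\big)\,\prod_{p\mid A}\big(1-\tfrac{1}{p}\big)^{r-1}\big(1+\tfrac{r-2}{p}\big),
\end{equation*}
and this converges, as $y\to\infty$, to $T_r\,\Upsilon_r(A)$ (the Euler products are absolutely convergent, since every factor is $1+O(1/p^2)$).

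Because $\text{PC}_\mathbf{a}(y)$ is a union of residue classes modulo $Q_y:=\prod_{p\le y}p$, routine lattice-point counting in the axis-aligned box $n\boldsymbol{\beta}$ gives $\#\{\mathbf{x}\le n\boldsymbol{\beta}:\mathbf{x}\in\text{PC}_\mathbf{a}(y)\}=\text{density}(\text{PC}_\mathbf{a}(y))\,n^r\,\beta_1\cdots\beta_r+O_y(n^{r-1})$. Combining this asymptotic with the sieve inclusion $\text{PC}_\mathbf{a}\subset\text{PC}_\mathbf{a}(y)$ and the tail bound above sandwiches the ratio $\#\{\mathbf{x}\le n\boldsymbol{\beta}:\mathbf{x}\in\text{PC}_\mathbf{a}\}/n^r$ between two expressions that both tend to $T_r\,\Upsilon_r(A)\,\beta_1\cdots\beta_r$ after letting first $n\to\infty$ (with $y$ fixed) and then $y\to\infty$. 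The only delicate point is precisely the uniform-in-$n$ tail bound from the second display, which legitimises the interchange of the two limits; everything else is bookkeeping, and the passage back from box indicators to continuous $f$ is the standard mollification already invoked in the paper.
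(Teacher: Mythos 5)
Your argument is correct, and it is worth noting that the paper itself offers no proof of this lemma at all: both Lemma~\ref{lemma:PC equidistributed} and Lemma~\ref{lemma:PCE equidistributed} are delegated to the companion preprint \cite{FF1}, with only the remark that one ``extends the argument of Cai and Bach.'' Your sieve is precisely that natural extension, and all the pieces check out: the reduction to box indicators is legitimate (the limit measure gives no mass to box boundaries, so weak convergence follows from convergence on boxes); the local admissible-residue counts are right, namely $(p-1)^{r-1}(p+r-1)/p^r$ at primes $p\nmid A$ and $(p-1)^{r-1}(p+r-2)/p^r$ at primes $p\mid A$ (using that $\mathbf{a}\in\mathrm{PC}$ forces each such $p$ to divide exactly one $a_k$), whose ratio is exactly $\Upsilon_r(p)$; the Euler product converges since each factor is $1+O(1/p^2)$; and the tail bound $\binom{r}{2}\,n^r\sum_{p>y}p^{-2}=O_r(n^r/y)$, uniform in $n$ once $y>\max_j a_j$, is the correct justification for interchanging $n\to\infty$ with $y\to\infty$. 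You have correctly isolated that uniform tail estimate as the one non-routine point. So rather than matching or diverging from the paper's proof, your write-up supplies a self-contained argument where the paper only gives a citation; it could stand as a proof of the lemma as stated.
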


Taking $f\equiv 1$, we obtain that the proportion of $\textrm{PC}_\mathbf{a}$ $r$-tuples in $\{1,\dots,n\}^r$ tends to $T_r\,\Upsilon_r(\prod_{j=1}^r a_j)$ as $n\to\infty$.
Again, two special cases of interest:
\begin{align}
\label{eq:equidistribution of PCE in Ar}
\lim_{n\to\infty}\frac{1}{n^r}\, \#\{\mathbf{x}\le n \, \boldsymbol{\beta}; & \ \mathbf{x} \in \text{\rm PC}_\mathbf{a}; \ x_1\cdots x_r\le t n^r\}
\\\nonumber
&= T_r\, \Upsilon_r\Big(\prod_{j=1}^r a_j\Big)\, B\, \Omega_r({t}/{B})\, ,\quad\text{for $t\le 1$ fixed,}
\end{align}
and for $q\ge 1$,
\begin{equation}
\label{eq:equidistribution for moments PCE}
\lim_{n\to\infty}\frac{1}{n^{r+qr}}\!\!\! \sum_{{\mathbf{x}\le n \, \boldsymbol{\beta}\,, \mathbf{x}\in \text{\rm PC}_\mathbf{a}}} \!\!\!x_1^q\cdots x_r^q = T_r\, \Upsilon_r\Big(\prod_{j=1}^r a_j\Big)\, \frac{1}{(q+1)^r}\, B^{q+1}\,.
\end{equation}

\subsection{Distribution function of the lcm of $r$-tuples}
In this subsection we shall prove Theorem \ref{teor:pdf lcm-r}. Fix $r\ge 3$.

\subsubsection{Proof of \eqref{eq:intro pdf lcm rge2-conzeta}}   For $0 \le t \le 1$ and real $z\ge 1$, we introduce  the following counting functions:
\begin{align}
N_r(t,z)&=\#\{\mathbf{x}\le z :\  x_1\cdots x_r\le t z^r\}\label{eq:def of N}
\\
G_r(t,z)&=\#\{\mathbf{x}\le z :\ x_1\cdots x_r\le t z^r\,,\ \gcd(\mathbf{x})=1\}\label{eq:def of M}
\\
L_r(t,z)&=\#\{\mathbf{x}\le z :\, \lcm(\mathbf{x})\le t z^r\}\label{eq:def of K}
\end{align}
Our objective is to estimate
$$
\P\big(\lcm\big(X^{(n)}_1, \dots,X^{(n)}_r\big)\le t n^r\big)= \frac{L_r(t,n)}{n^r}\quad\text{as $n\to\infty$}\, ,
$$
and we shall use for that purpose convenient auxiliary estimates for $N$ and~$G$.

The following elementary lemma estimates the number of lattice
points in the positive $r$-cube $[1,z]^r$ such that $x_1\cdots x_r\le  t z^r$,  in
terms of the volume of the region: 
\begin{lemma}
\label{lemma:N in terms of volume} For $0<\delta\le t< 1$,
$$
\frac{1}{z^r} \, N_r(t,z)=\Omega_r(t)+O_\delta \Big(\frac{1}{z}\Big)\quad\text{as $z\to\infty$.}
$$
\end{lemma}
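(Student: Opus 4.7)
The plan is to proceed by induction on $r$, reducing the $r$-dimensional lattice count to an $(r-1)$-dimensional one by fixing the last coordinate. The base case $r=1$ is immediate, since $N_1(t,z)=\lfloor tz\rfloor$ and $\Omega_1(t)=t$, so $N_1(t,z)/z=t+O(1/z)$, with implicit constant independent of $t$.

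For the inductive step, I would partition $N_r(t,z)$ according to the value $x_r=k$, $1\le k\le \lfloor z\rfloor$. The constraint on the remaining $(r-1)$-tuple is $x_1\cdots x_{r-1}\le (tz/k)\,z^{r-1}$. When $k\le tz$ every $(r-1)$-tuple in $\{1,\dots,\lfloor z\rfloor\}^{r-1}$ qualifies (the bound is $\ge z^{r-1}$), contributing $\lfloor z\rfloor^{r-1}$; when $k>tz$ the count is exactly $N_{r-1}(tz/k,z)$, and crucially $tz/k> t\ge \delta$, so the inductive hypothesis applies with the same~$\delta$. This yields
\begin{equation*}
N_r(t,z)=\lfloor tz\rfloor\,\lfloor z\rfloor^{r-1}+\sum_{k=\lfloor tz\rfloor+1}^{\lfloor z\rfloor} N_{r-1}(tz/k,z).
\end{equation*}

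Next I would insert the inductive bound $N_{r-1}(tz/k,z)=z^{r-1}\Omega_{r-1}(tz/k)+O_\delta(z^{r-2})$. The error accumulates over $O(z)$ values of $k$ to $O_\delta(z^{r-1})$, while the leading term $z^{r-1}\sum_{k>tz}\Omega_{r-1}(tz/k)$ is a Riemann-type sum for $z^r\!\int_t^1\Omega_{r-1}(t/x)\,dx$. Since $x\mapsto \Omega_{r-1}(t/x)$ is monotone on $[t,1]$ with total variation at most~$1$, the standard monotone Riemann-sum comparison introduces an additional error of size $O(z^{r-1})$. Combining with $\lfloor tz\rfloor\lfloor z\rfloor^{r-1}=tz^r+O(z^{r-1})$ and the recursion $\Omega_r(t)=t+\int_t^1\Omega_{r-1}(t/x)\,dx$ already recorded in the proof of Lemma~\ref{lemma:volumen bajo hiperbola}, one obtains $N_r(t,z)=z^r\Omega_r(t)+O_\delta(z^{r-1})$, which is the assertion.

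The main delicacy will be to verify that the Riemann-sum approximation and the accumulated inductive error remain uniform in $t$ for $t\ge \delta$. The role of the hypothesis $t\ge \delta>0$ is precisely to keep the argument $tz/k$ of $\Omega_{r-1}$ bounded away from zero throughout the induction; without this, the volume $\Omega_r$ develops logarithmic singularities near the coordinate hyperplanes (reflected in the factor $\ln(1/s)^{r-1}$ in \eqref{eq:formula for Omegar}), and the implicit constants in the inductive step would blow up. An alternative, cleaner but less self-contained, route would be to apply directly a boundary lattice-counting estimate: the discrepancy $|N_r(t,z)-z^r\Omega_r(t)|$ is bounded by $z^{r-1}$ times the $(r-1)$-dimensional content of $\partial\mathcal{A}_r(t)$, which (upon parametrizing the hyperbolic piece by $u_r=t/(u_1\cdots u_{r-1})$) is easily seen to be $O_\delta(1)$ uniformly for $t\ge \delta$.
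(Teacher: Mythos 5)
Your proof is correct. The paper states this lemma without proof (it is merely labelled ``elementary''), so there is no argument of the authors' to compare against; your induction on the dimension is the natural way to supply the missing details, and it dovetails with the recursion $\Omega_r(s)=s+\int_s^1\Omega_{r-1}(s/x)\,dx$ that the paper itself uses to prove Lemma~\ref{lemma:volumen bajo hiperbola}. The two points that need checking both hold: first, the argument passed to the inductive hypothesis satisfies $\delta\le t\le tz/k<1$ throughout the range $tz<k\le\lfloor z\rfloor$, so the same $\delta$ (and hence a uniform implied constant) propagates through the induction; second, the Riemann-sum comparison for $x\mapsto\Omega_{r-1}(t/x)$ on $[t,1]$ costs only $O(z^{r-1})$ with an absolute constant, since that function is monotone with values in $[0,1]$ independently of $t$. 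Note that the error budget is tight: $O(z)$ values of $k$, each contributing an inductive error $O_\delta(z^{r-2})$, give exactly the claimed $O_\delta(z^{r-1})$, which is why the uniformity of the inductive constant over all first arguments in $[\delta,1)$ is essential and is correctly built into your formulation. Your closing remark also correctly identifies why the hypothesis $t\ge\delta$ cannot be dropped, in view of the logarithmic blow-up of $\Omega_r(s)/s$ as $s\to0$.
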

Observe that the bound dependes on $\delta$ and not on $t$.
The next lemma estimates the function $G_r(t,z)$:
\begin{lemma}
\label{lemma:estimate of M} {\rm a)} For $0<\delta\le t< 1$, we have that, as $z\to\infty$,
\begin{equation}
\label{eq:estimate for M, t<1}G_r(t,z)=\frac{\Omega_r(t)}{\zeta(r)}\, z^r + O_\delta(z^{r-1}).
\end{equation}
{\rm b)} For $t\ge 1$,
\begin{equation}
\label{eq:estimate for M, t>1}G_r(t,z)=\frac{1}{\zeta(r)}\, z^r + O(z^{r-1}).
\end{equation}
\end{lemma}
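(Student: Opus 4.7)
The plan is to decouple the coprimality constraint from the product constraint via Möbius inversion, and then plug in the volume estimate from Lemma \ref{lemma:N in terms of volume}. Following the same maneuver as in \eqref{eq:use of mu}, I rewrite $\uno_{\gcd(\mathbf{x})=1}=\sum_{k\mid \gcd(\mathbf{x})}\mu(k)$, substitute $\mathbf{x}=k\mathbf{y}$, and observe that $x_1\cdots x_r\le t z^r$ becomes $y_1\cdots y_r\le t(z/k)^r$. This gives
$$
G_r(t,z)=\sum_{k\le z}\mu(k)\, N_r\big(t,z/k\big).
$$

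For part a), fix $\delta\in(0,t]$ and apply Lemma \ref{lemma:N in terms of volume} uniformly: $N_r(t,z/k)=\Omega_r(t)(z/k)^r+O_\delta\big((z/k)^{r-1}\big)$. Splitting into main term and error:
$$
G_r(t,z)=\Omega_r(t)\,z^r\sum_{k\le z}\frac{\mu(k)}{k^r}+O_\delta\!\left(z^{r-1}\sum_{k\le z}\frac{1}{k^{r-1}}\right).
$$
Since $r\ge 3$, the series $\sum 1/k^{r-1}$ converges, so the error term is $O_\delta(z^{r-1})$. For the main term, $\sum_{k\le z}\mu(k)/k^r=1/\zeta(r)+O(z^{-(r-1)})$ (tail bound by the integral test), which contributes the claimed $\Omega_r(t) z^r/\zeta(r)$ plus an extra $O(\Omega_r(t)\,z)$ absorbed into $O_\delta(z^{r-1})$ since $\Omega_r(t)\le 1$ and $r\ge 2$.

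For part b), with $t\ge 1$ the constraint $x_1\cdots x_r\le tz^r$ is vacuous on $[1,z]^r$, so $N_r(t,z/k)=\lfloor z/k\rfloor^r$ and
$$
G_r(t,z)=\sum_{k\le z}\mu(k)\,\lfloor z/k\rfloor^r=\frac{z^r}{\zeta(r)}+O(z^{r-1}),
$$
which is the classical Dirichlet/Möbius count of coprime $r$-tuples, identical in spirit to \eqref{eq:prob gcd=1 in terms of Cesaro}. This matches the $t\to 1^-$ limit of part a) since $\Omega_r(1)=1$. The one delicate point is that the estimate in Lemma \ref{lemma:N in terms of volume} must be uniform in $t\in[\delta,1)$ (not merely hold for each fixed $t$), so that the implied constant survives the summation over $k$; I am reading the lemma's $O_\delta$-notation as supplying precisely this uniformity.
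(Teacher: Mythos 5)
Your proof is correct and follows essentially the same route as the paper: both reduce $G_r(t,z)=\sum_{k\le z}\mu(k)N_r(t,z/k)$ (the paper gets this identity by first partitioning $N_r$ over the value of the gcd and then Möbius-inverting, you by expanding the coprimality indicator directly, which is the same computation), then apply Lemma \ref{lemma:N in terms of volume} with the convergence of $\sum k^{-(r-1)}$ for $r\ge 3$, and handle $t\ge 1$ as the classical coprime count. Your reading of the $O_\delta$ as uniform in $t\in[\delta,1)$ is exactly the intended one.
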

\begin{proof}
The estimate \eqref{eq:estimate for M, t>1} is the case $k=1$ of \eqref{eq:intro mass function gcd-r}, as for $t\ge 1$, $G_r(t,z)$ counts coprime $r$-tuples.
For the case $t<1$, partitioning accordingly as the
value of the greatest common divisor, we can write
\begin{align*}
N_r(t,z)&=\sum_{d\le z} \#\{\mathbf{x}\le z :\ x_1\cdots x_r\le t z^r\,,\ \gcd(\mathbf{x})=d\}
\\
&=\sum_{d\le z} \#\{\mathbf{y}\le z/d :\ y_1\cdots y_r\le t (z/d)^r\,,\ \gcd(\mathbf{y})=1\}
=\sum_{d\le z}G_r(t, z/d).
\end{align*}
By M\"{o}bius inversion (see, for instance, Theorem~2.22 in~\cite{Ap}), 
we deduce that
$$
G_r(t,z)=\sum_{d\le z} \mu(d)\, N_r(t,z/d).
$$
Now, for $t\ge \delta$, using Lemma \ref{lemma:N in terms of volume} and $r\ge 3$, we get that,
\begin{align*}
G_r(t,z)&= \sum_{d\le z} \mu(d)\, \Big(\frac{z^r}{d^r}\, \Omega_r(t)+O_\delta \Big(\frac{z^{r-1}}{d^{r-1}}\Big)\Big)=\Omega_r(t) z^r \sum_{d\le z} \frac{\mu(d)}{d^r} +O_\delta\big(z^{r-1}\big)
\\
&=\Omega_r(t) \frac{z^r}{\zeta(r)}  +O_\delta\big(z^{r-1}\big),
\end{align*}as claimed.
\end{proof}

Our objective is to estimate~$L_r(t,z)$.
Let us write, again by partitioning,
\begin{align*}
&L_r(t,z)=\sum_{d\le z} \#\{\mathbf{x} \le z:\  \lcm(\mathbf{x})\le t z^r,\, \gcd(\mathbf{x})=d\}
\\
&\quad=\sum_{d\le z} \#\{\mathbf{y} \le z/d:\  \lcm(\mathbf{y})\le td^{r-1} \Big(\frac{z}{d}\Big)^r,\, \gcd(\mathbf{y})=1\}
\\
&\quad \ge \sum_{d\le z} \#\{\mathbf{y} \le z/d:\  y_1 \cdots y_r\le td^{r-1} \Big(\frac{z}{d}\Big)^r,\, \gcd(\mathbf{y})=1\}=\sum_{d\le z} G_r(t\, d^{r-1}, z/d)\, ,
\end{align*}
where we have used that if $\gcd(\mathbf{x})=d$ and if we write $x_i=dy_i$, for $i=1,\dots, r$, then $\gcd(\mathbf{y})=1$ and $\lcm(\mathbf{y})=\lcm(\mathbf{x})/d$. We have also used that $y_1 \cdots y_r \ge \lcm(\mathbf{y})$. The above inequality would turn into an equality if $r=2$.

Therefore,
$$
\P\big(\lcm\big(X^{(n)}_1, \dots,X^{(n)}_r\big)\le t n^r\big)= \frac{L_r(t,n)}{n^r}\ge \frac{1}{n^r}\sum_{d\le n} G_r(t\, d^{r-1}, n/d).
$$

To get the lower bound \eqref{eq:intro pdf lcm rge2-conzeta}, we estimate the latter sum. For that purpose, we split it into two sums, depending on whether the first argument of $G_r$ is less than or equal to 1, or not:
$$
\sum_{d\le n} G_r\big(t\, d^{r-1}, {\textstyle\frac{n}{d}}\big)=\sum_{d\le n,\, td^{r-1}\le 1} G_r\big(t\, d^{r-1}, {\textstyle\frac{n}{d}}\big)+\sum_{d\le n,\, td^{r-1}> 1} G_r(t\, d^{r-1}, {\textstyle\frac{n}{d}}\big):= \text{I}+\text{II}.
$$
For $td^{r-1}\le 1$, thanks to \eqref{eq:estimate for M, t<1}, we can write
$$
G_r\big(t\, d^{r-1}, {\textstyle\frac{n}{d}}\big)=\frac{1}{\zeta(r)} \,
\Omega_r(td^{r-1}) \,\Big(\frac{n}{d}\Big)^r+O_{t}\Big(\Big(\frac{n}{d}\Big)^{r-1}\, \Big)
$$
where we have used, in the $O$ term, that $td^{r-1}\ge t$ since $d\ge 1$. So the term I can be written
as
\begin{align*}
\text{I}&
= \frac{n^r}{\zeta(r)} \sum_{d\le n,\, td^{r-1}\le 1} \frac{\Omega_r(td^{r-1})}{d^r} +O_{t}\big(n^{r-1}\big)\,,
\end{align*}
where we have used that $r\ge 3$.

On the other hand, using now \eqref{eq:estimate for M, t>1} and $r \ge 3$,
\begin{align*}
\text{II}= \sum_{d\le n,\, td^{r-1}> 1} \frac{1}{\zeta(r)}\, \Big(\frac{n}{d}\Big)^r+O\Big(\sum_{d \le n}\Big(\frac{n}{d}\Big)^{r-1}\Big)
= \frac{n^r}{\zeta(r)} \sum_{d\le n,\, td^{r-1}> 1} \frac{1}{d^r} +O\big(n^{r-1})\,.
\end{align*}
Adding  up the expressions for I and II,
\begin{align*}
\sum_{d\le n} G_r&\,\big(t\, d^{r-1}, {\textstyle\frac{n}{d}}\big)
= \frac{n^r}{\zeta(r)} \sum_{d\le n} \frac{1}{d^r}+ \frac{n^r}{\zeta(r)} \sum_{d\le n,\, td^{r-1}\le  1} \frac{(\Omega_r(td^{r-1})-1)}{d^r}+O_{t}\big(n^{r-1}\big)
\\
&=n^r \Big(1-\frac{1}{\zeta(r)} \sum_{d\le n,\, td^{r-1}\le  1} \frac{(1-\Omega_r(td^{r-1}))}{d^r}\Big)+O_{t}\big(n^{r-1}\big)\,,
\end{align*}
from which inequality \eqref{eq:intro pdf lcm rge2-conzeta} is proved.

\subsubsection{Proof of \eqref{eq:intro pdf lcm rge2-conTr}}
We now obtain a lower  bound of the complementary probability by restricting to those $r$-tuples whose pairwise gcd are all equal and then partitioning according to the value of that common gcd.
Fix $0\le t\le 1$. Observe that
$$
\#\{\mathbf{x}\le n: \lcm(\mathbf{x})>tn^r\}
\ge
\sum_{k=1}^n  \#\{\mathbf{x}\le n: \lcm(\mathbf{x})>tn^r, \gcd(x_i,x_j)=k \text{ for $i\ne j$}\}\,.
$$

Writing $x_i=ky_i$, for each $i=1,\dots, r$, we get that  $\mathbf{y}\in \textrm{PC}$ and that $\lcm(\mathbf{x})=k\cdot y_1\cdots y_r$, and conversely. So we can write
\begin{align*}
&\frac{1}{n^r}\#\{\mathbf{x}\le n: \lcm(\mathbf{x})>tn^r\}
\ge
\frac{1}{n^r}\sum_{k=1}^n  \#\{\mathbf{y}\le {\textstyle\frac{n}{k}}: y_1\cdots y_r>tk^{r-1} {\textstyle(\frac{n}{k})^r}, \mathbf{y}\in \textrm{PC}\}
\\
&\qquad=
\sum_{tk^{r-1}<1}  \frac{1}{k^r}\ \frac{1}{(n/k)^r}\  \#\{\mathbf{y}\le {\textstyle\frac{n}{k}}: y_1\cdots y_r>tk^{r-1} {\textstyle(\frac{n}{k})^r}, \mathbf{y}\in \textrm{PC}\}\,.
\end{align*}

For fixed $t$, the sum has a finite number of terms, and the $k^{\text{th}}$ one tends to
$$
\frac{1}{k^r}T_r\,(1-\Omega_r(t k^{r-1})),\quad\text{as $n\to\infty$,}
$$
according to \eqref{eq:equidistribution of PC in Ar}. So the whole sum tends
$$
T_r\sum_{tk^{r-1}<1} \frac{1-\Omega_r(t k^{r-1})}{k^r},\quad\text{as $n\to\infty$,}
$$
and \eqref{eq:intro pdf lcm rge2-conTr} is proved.

\subsection{Moments of the lcm of $r$-tuples}
In this subsection, we shall prove Theorem \ref{teor:moments lcm-r}, with an argument akin to that in Theorem 10 of \cite{GS}.

\subsubsection{Proof of \eqref{eq:intro, moments lcm rge2-conzeta}}

The following lemma is  a direct application of~\eqref{eq:use of mu}:
\begin{lemma}
\label{lema:funcion fkr}Fix $q\ge 1$ and consider the summatory function $\acum{I_q}(n)=\sum_{j\le n} j^q$. Then, for $r\ge 2$,
\begin{equation}
\label{eq:estimacion fkr} 
\sum_{\substack{\mathbf{y}\le n,\\
\gcd(\mathbf{y})=1}} y_1^q\cdots y_r^q=\sum_{d\le n} \mu(d) \,d^{rq}\, \big[\acum{I_q}(n/d)\big]^r.
\end{equation}
\end{lemma}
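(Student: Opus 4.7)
The plan is to invoke formula \eqref{eq:use of mu} with the multiplicative test function $F(\mathbf{x}) = x_1^q \cdots x_r^q$ and then exploit the fact that $F$ factors across the coordinates, which makes the auxiliary $r$-fold sum collapse into the $r$-th power of a one-dimensional summatory function.

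Concretely, I would first substitute $F(\mathbf{x}) = x_1^q \cdots x_r^q$ into the right-hand side of \eqref{eq:use of mu}, obtaining
\begin{equation*}
\sum_{\substack{\mathbf{y}\le n \\ \gcd(\mathbf{y})=1}} y_1^q\cdots y_r^q
= \sum_{d=1}^{n} \mu(d) \sum_{\mathbf{z}\le n/d} (d z_1)^q \cdots (d z_r)^q.
\end{equation*}
Next, I would pull the common factor $d^{rq}=(d^q)^r$ out of the inner sum, since $(dz_1)^q\cdots(dz_r)^q = d^{rq}\, z_1^q\cdots z_r^q$. This reduces the task to evaluating $\sum_{\mathbf{z}\le n/d} z_1^q \cdots z_r^q$.

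The key (and utterly straightforward) observation is that the inner sum factorizes by Fubini for finite sums:
\begin{equation*}
\sum_{\mathbf{z}\le n/d} z_1^q \cdots z_r^q = \prod_{i=1}^{r} \Bigl(\sum_{z_i\le n/d} z_i^q\Bigr) = \bigl[\acum{I_q}(n/d)\bigr]^r,
\end{equation*}
by the very definition of $\acum{I_q}$. Assembling these steps yields exactly the claimed identity \eqref{eq:estimacion fkr}.

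There is really no obstacle here: the lemma is a bookkeeping restatement of \eqref{eq:use of mu} specialized to a separable $F$, combined with the elementary factorization of a product-of-coordinates sum over a product set. The only minor point to keep in mind is that the Möbius-inversion step \eqref{eq:use of mu} has already been established at the start of Section~\ref{sec:preliminaries}, so it may be cited directly, making the proof essentially a one-line computation.
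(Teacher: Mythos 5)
Your proposal is correct and is exactly the argument the paper intends: the paper dismisses the lemma as ``a direct application of \eqref{eq:use of mu}'', and your specialization to $F(\mathbf{x})=x_1^q\cdots x_r^q$, extraction of $d^{rq}$, and factorization of the inner sum over the product set into $\big[\acum{I_q}(n/d)\big]^r$ is precisely that application spelled out.
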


Applying this lemma and the trivial estimate $\lcm(\mathbf{y})\le y_1 \cdots y_r$ we get
\begin{align}
\nonumber \sum_{\mathbf{x}\le n} \lcm(\mathbf{x})^q
&=\sum_{d\le n} \sum_{\substack{\mathbf{x}\le n,
\\
\gcd(\mathbf{x})=d}} \lcm(\mathbf{x})^q =\frac{1}{n^r}\sum_{d\le n} d^q \sum_{\substack{\mathbf{y}\le n/d,\\
\gcd(\mathbf{y})=1}} \lcm(\mathbf{y})^q
\\
\label{eq:lcm in terms of Sq}
&\le \sum_{kd\le n} d^q \,\mu(k) \,k^{rq}\, \Big[\acum{I_q}\Big(\frac{n}{dk}\Big)\Big]^r\, .
\end{align}

This means that
$$
\E\big(\lcm\big(X^{(n)}_1, \dots,X^{(n)}_r\big)^q\le \frac{1}{n^r}\sum_{kd\le n} d^q \,\mu(k) \,k^{rq}\, \Big[\acum{I_q}\Big(\frac{n}{dk}\Big)\Big]^r,
$$
and we will get \eqref{eq:intro, moments lcm rge2-conzeta} by estimating this last sum. From \eqref{eq:acumulador de fq}, we deduce that
$$
\Big[\acum{I_q}\Big(\frac{n}{dk}\Big)\Big]^r=\frac{1}{(q+1)^r}\, \Big(\frac{n}{dk}\Big)^{r(q+1)}+  O_{q,r}\Big(\frac{n}{dk}\Big)^{r(q+1)-1}.
$$
Plugging this into \eqref{eq:lcm in terms of Sq}, the sum of leading terms is given by
\begin{align*}
n^{rq}\, \frac{1}{(q+1)^r}\, \frac{\zeta(r(q+1)-q)}{\zeta(r)} +O_{q,r}(n^{r(q-1)+1}),
\end{align*}
while the sum of the error terms is seen to be $O_{q,r}\big(n^{rq-1}\big)$. Adding this up, we obtain \eqref{eq:intro, moments lcm rge2-conzeta}.

\subsubsection{Proof of \eqref{eq:intro, moments lcm rge2-conTr}} Arguing as in the proof of \eqref{eq:intro pdf lcm rge2-conTr},
\begin{align*}
&\frac{1}{n^{rq+r}} \sum_{\mathbf{x}\le n} \lcm(\mathbf{x})^q
\ge \frac{1}{n^{rq+r}} \sum_{k=1}^n \sum_{\substack{\mathbf{x}\le n\\ \gcd(x_i,x_j)=k,  i\ne j}} \lcm(\mathbf{x})^q
= \sum_{k=1}^n  \frac{k^q}{n^{rq+r}}
\sum_{\substack{\mathbf{y}\le n/k\\ \mathbf{y}\in\textrm{PC}}}
y_1^q\cdots y_r^q\,.
\end{align*}
According to \eqref{eq:equidistribution for moments} (with
$\beta_j=1/k$), for each summand we have that
$$
\lim_{n \to \infty}  \frac{k^q}{n^{rq+r}} \sum_{\substack{\mathbf{y}\le n/k\\ \mathbf{y}\in\textrm{PC}}} y_1^q\cdots y_r^q=
\frac{T_r}{(q+1)^r}\frac{1}{k^{r(q+1)-q}}
$$
From the bound
$$
\frac{k^q}{n^{rq+r}} \sum_{\substack{\mathbf{y}\le n/k\\
\mathbf{y}\in\textrm{PC}}} y_1^q\cdots y_r^q\le \frac{k^q}{n^{rq+r}}\,
\Big(\frac{n}{k}\Big)^{rq}\,
\Big(\frac{n}{k}\Big)^r=\frac{1}{k^{r(q+1)-q}},
$$
dominated convergence and the fact  that $\sum_{k=1}^\infty
1/{k^{r(q+1)-q}} < +\infty$, we deduce
$$\lim_{n \to \infty}\frac{1}{n^{rq+r}} \sum_{k=1}^n \sum_{\substack{\mathbf{x}\le n\\ \gcd(x_i,x_j)=k,  i\ne j}} \lcm(\mathbf{x})^q
=\frac{T_r}{(q+1)^r}\ \zeta(r(q+1)-q)\,,
$$
and therefore,
$$
\begin{aligned}
\liminf_{n \to \infty}\frac{1}{n^{rq}}\E\big(\lcm(X^{(n)}_1,\ldots,
X^{(n)}_r\big)&=\liminf_{n \to \infty}\frac{1}{n^{rq+r}} \sum_{\mathbf{x}\le n} \lcm(\mathbf{x})^q\ge\frac{T_r\, \zeta(r(q+1)-q)}{(q+1)^r}\
.
\end{aligned}$$ This proves \eqref{eq:intro, moments lcm
rge2-conTr}.

\subsection{The logarithm of the lcm}\label{subsection:log of lcm} The structure of the lcm exhibited in \eqref{eq:lcm as a product} invites to consider its logarithm, as it may be written in terms of sums of logarithms of gcd's of different lengths:
\begin{align}\label{eq:ln of lcm as a sum}
\ln\big(\lcm(a_1,\dots, a_r)\big)=&\,\sum_{j=1}^r \ln(a_j)- \sum_{i< j}\ln(\gcd(a_i,a_j))
\\ \nonumber
&+ \sum_{i< j< k}\ln(\gcd(a_i,a_j, a_k))-\cdots
\end{align}
Using this, we can prove the following:
\begin{prop}
\label{prop:moments of ln of lcm}For $r\ge 2$,
\begin{equation}
\label{eq:moments of ln of lcm}
\lim_{n\to\infty} \Big[\E\big(\ln(\lcm(X_1^{(n)},\dots, X_r^{(n)}))\big)-\frac{r}{n}\sum_{j=1}^n \ln(j)\Big]=\sum_{k=2}^r {r\choose k} (-1)^k \,\frac{\zeta'(k)}{\zeta(k)}\, .
\end{equation}
\end{prop}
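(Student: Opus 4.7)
The plan is to take expectations on both sides of the identity \eqref{eq:ln of lcm as a sum}, exploit the fact that the variables $X_1^{(n)},\dots,X_r^{(n)}$ are i.i.d., and so reduce the problem to computing the limit of $\E\bigl[\ln(\gcd(X_1^{(n)},\dots,X_k^{(n)}))\bigr]$ for each $k\in\{2,\dots,r\}$. By the symmetry (exchangeability) of the joint distribution, every $k$-fold gcd $\gcd(X_{i_1}^{(n)},\dots,X_{i_k}^{(n)})$ has the same law as $\gcd(X_1^{(n)},\dots,X_k^{(n)})$, so the inner sum at level $k$ in \eqref{eq:ln of lcm as a sum} contributes $(-1)^{k-1}\binom{r}{k}\E\bigl[\ln(\gcd(X_1^{(n)},\dots,X_k^{(n)}))\bigr]$ to the expectation. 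Since $\E[\ln(X_1^{(n)})]=\frac{1}{n}\sum_{j=1}^n\ln(j)$ exactly, the $k=1$ layer in \eqref{eq:ln of lcm as a sum} accounts \emph{exactly} for the quantity $\frac{r}{n}\sum_{j=1}^n\ln(j)$ being subtracted, so proving \eqref{eq:moments of ln of lcm} reduces to
\[
\lim_{n\to\infty} \E\bigl[\ln(\gcd(X_1^{(n)},\dots,X_k^{(n)}))\bigr] \;=\; -\frac{\zeta'(k)}{\zeta(k)},\qquad k\ge 2.
\]

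To establish the above limit I would write $\E\bigl[\ln(\gcd(\cdots))\bigr]=\sum_{d=1}^n \ln(d)\,\P(\gcd(X_1^{(n)},\dots,X_k^{(n)})=d)$ and plug in the mass function estimate given by \eqref{eq:mass gcd pairs} (for $k=2$) or \eqref{eq:intro mass function gcd-r} (for $k\ge 3$). The main term becomes
\[
\frac{1}{\zeta(k)}\sum_{d=1}^n \frac{\ln(d)}{d^k},
\]
which, in view of the identity $\zeta'(s)=-\sum_{m\ge 1}\ln(m)/m^s$ valid for $\Re(s)>1$, tends to $-\zeta'(k)/\zeta(k)$; the tail is $O(\ln(n)/n^{k-1})\to 0$ for $k\ge 2$. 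Combining with the alternating binomial coefficients and noting the sign flip $(-1)^{k-1}\cdot(-1)=(-1)^k$ yields the closed form on the right-hand side of \eqref{eq:moments of ln of lcm}.

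The main obstacle is the error control for $k=2$, where the bound $O((1+\ln(n/d))/(nd))$ from \eqref{eq:mass gcd pairs} is weaker than the $O(1/(nd^{k-1}))$ available for $k\ge 3$. Summing the error against $\ln(d)$ from $d=1$ to $n$ requires splitting the range into small $d$ (where $\ln(n/d)$ is close to $\ln(n)$) and large $d$, after which the total error is bounded by $O(\ln^3(n)/n)$, which still tends to $0$. For $k\ge 3$ the corresponding estimate is much easier, yielding an error of order $O(1/n)$ since $\sum_{d\ge 1}\ln(d)/d^{k-1}$ converges. Once these routine bounds are assembled, the proposition follows.
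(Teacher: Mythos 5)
Your argument is correct, and it follows the paper's overall strategy exactly: take expectations in the inclusion--exclusion identity \eqref{eq:ln of lcm as a sum}, use exchangeability to collapse each level $k$ to $(-1)^{k-1}\binom{r}{k}\E\bigl[\ln(\gcd(X_1^{(n)},\dots,X_k^{(n)}))\bigr]$, observe that the $k=1$ layer cancels $\frac{r}{n}\sum_{j\le n}\ln(j)$ exactly, and reduce everything to showing $\lim_n\E\bigl[\ln(\gcd(X_1^{(n)},\dots,X_k^{(n)}))\bigr]=-\zeta'(k)/\zeta(k)$ for $k\ge 2$. Where you diverge is in how that inner limit is computed. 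The paper applies Ces\`aro's formula \eqref{eq:Cesaro sum} with $f=\ln$, uses $\mu*\ln=\Lambda$ (von Mangoldt), and evaluates $\sum_j\Lambda(j)/j^k=-\zeta'(k)/\zeta(k)$ directly; this treats all $k\ge 2$ uniformly with the single bound $\Lambda(j)\le\ln(j)$ and requires no case distinction. You instead sum $\ln(d)$ against the gcd mass function, using \eqref{eq:intro mass function gcd-r} for $k\ge 3$ and \eqref{eq:mass gcd pairs} for $k=2$, together with $\zeta'(s)=-\sum_m\ln(m)/m^s$; this works, but forces you to handle $k=2$ separately because the error term $O\bigl((1+\ln(n/d))/(nd)\bigr)$ is weaker, and your bound $O(\ln^3(n)/n)$ for the accumulated error there is right. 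The two computations are of course two faces of the same Dirichlet-series identity ($\Lambda=\mu*\ln$ versus $\zeta'/\zeta=\zeta'\cdot\zeta^{-1}$); the paper's route is slightly cleaner, yours has the merit of leaning only on the stated Theorems A and B$'$ rather than on an extra convolution identity.
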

\begin{proof}
Observe that, by Ces\`{a}ro's formula \eqref{eq:Cesaro sum},
\begin{align*}
\E\big(\ln(\lcm(X_1^{(n)},\dots, X_r^{(n)}))&\big)
= {r\choose 1} \E\big(\ln(X_1^{(n)})\big)- {r\choose 2} \E\big(\ln(\gcd(X_1^{(n)}, X_2^{(n)}))\big)+\cdots 
\\
&=\frac{r}{n} \sum_{j=1}^n \ln(j)-\sum_{k=2}^r {r\choose k} (-1)^k \sum_{j=1}^n (\mu*\ln)(j)\Big(\Big\lfloor\frac{n}{k}\Big\rfloor\, \frac{1}{n}\Big)^k
\\
&=\frac{r}{n} \sum_{j=1}^n \ln(j)-\sum_{k=2}^r {r\choose k} (-1)^k \sum_{j=1}^n \Lambda(j)\Big(\Big\lfloor\frac{n}{k}\Big\rfloor\, \frac{1}{n}\Big)^k\,.
\end{align*}
We have used the fact that $(\mu*\ln)(j)=\Lambda(j)$, where $\Lambda$ denotes the von Mangoldt's function (see Theorem 295 in \cite{HW}).  
For $k\ge 2$ fixed,
$$
\lim_{n\to\infty}\sum_{j=1}^n \Lambda(j)\Big(\Big\lfloor\frac{n}{j}\Big\rfloor\, \frac{1}{n}\Big)^k=\sum_{j=1}^\infty \frac{\Lambda(j)}{j^k}=-\frac{\zeta'(k)}{\zeta(k)}\,,
$$
where we have used the trivial estimate $\Lambda(n)\le \ln(n)$ and the standard expression of the Dirichlet series of the function~$\Lambda$ (see Theorem 294 in \cite{HW}).\end{proof}

By the way, from \eqref{eq:moments of ln of lcm} and Jensen's inequality one gets that
$$
\liminf_{n\to\infty}
\frac{\E\big(\lcm\big(X^{(n)}_1, \dots,X^{(n)}_r\big)^q\big)}{n^{rq}}\ge e^{q(H(r)-r)}\,,
$$
where $H(r)$ is the function defined in the right hand side of \eqref{eq:moments of ln of lcm}, a lower bound weaker than \eqref{eq:intro, moments lcm rge2-conTr}, but of the proper order.

\subsection{Alternative normalization of lcm}\label{subsection:alternatrive_normalization}

As we have mentioned before, one may alternatively normalize lcm by dividing it by the product of the numbers. With an argument similar, but simpler, to the one used to prove Theorem \ref{teor:pdf lcm-r}, one may derive:
\begin{prop}\label{prop:prob_lcm_divide_by_product_r}
For $r \ge 2$, and for every $0 < t \le 1$,
\begin{align*}
\liminf_{n\to\infty}\P\Big(\frac{\lcm\big(X^{(n)}_1, \dots,X^{(n)}_r\big)}{X^{(n)}_1\cdots X^{(n)}_r}\le t \Big)
&\ge 1-\frac{1}{\zeta(r)}
\sum_{j^{r-1} < 1/t}
\frac{1}{j^r}\,,
\\
\limsup_{n\to\infty}\P\Big(\frac{\lcm\big(X^{(n)}_1, \dots,X^{(n)}_r\big)}{X^{(n)}_1\cdots X^{(n)}_r}\le t \Big)
&\le 1-\ T_r
\ \sum_{j^{r-1} < 1/t}
\frac{1}{j^r}\,.
\end{align*}
Actually, for $r=2$, we have equality
 for every $0< t\le1:$
 $$
  \lim_{n\to\infty}\P\Big(\frac{\lcm\big(X^{(n)}_1, X^{(n)}_2\big)}{X^{(n)}_1 X^{(n)}_2}\le t \Big)
= 1- \frac{1}{\zeta(2)}
\ \sum_{j < 1/t}
\frac{1}{j^2}\,.
$$
\end{prop}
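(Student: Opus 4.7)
\textbf{Proof plan for Proposition \ref{prop:prob_lcm_divide_by_product_r}.}

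The plan is to estimate the complementary probability $\P\big(\lcm(\mathbf{X}^{(n)})/(X^{(n)}_1\cdots X^{(n)}_r)>t\big)$ by partitioning according to the value of $\gcd(\mathbf{X}^{(n)})$, and then sandwiching the resulting count between the $\gcd$-equal-to-$1$ tuples (from above) and the pairwise coprime tuples (from below). Write $\mathbf{x}=d\mathbf{y}$ whenever $\gcd(\mathbf{x})=d$, so that $\gcd(\mathbf{y})=1$ and
$$
\frac{\lcm(\mathbf{x})}{x_1\cdots x_r}=\frac{\lcm(\mathbf{y})}{d^{r-1}\,y_1\cdots y_r}\,.
$$
Consequently,
$$
\P\Big(\frac{\lcm(\mathbf{X}^{(n)})}{X^{(n)}_1\cdots X^{(n)}_r}>t\Big)=\frac{1}{n^r}\sum_{d\ge 1}\#\big\{\mathbf{y}\le n/d:\gcd(\mathbf{y})=1,\ \lcm(\mathbf{y})>t\,d^{r-1}\,y_1\cdots y_r\big\}.
$$
Since $\lcm(\mathbf{y})\le y_1\cdots y_r$, only those $d$ with $d^{r-1}<1/t$ contribute; in particular, this is a \emph{finite} sum for each fixed $t>0$, which sidesteps any uniform convergence issues.

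Next I would establish the two bounds. For the upper bound on $\P(>t)$ (hence the $\liminf$ claim), I would simply drop the $\lcm$ condition in each summand and use the case $k=1$ of \eqref{eq:intro mass function gcd-r} to get
$$
\frac{1}{n^r}\#\{\mathbf{y}\le n/d:\gcd(\mathbf{y})=1\}=\frac{1}{d^r\,\zeta(r)}+o(1)\quad(\text{as }n\to\infty),
$$
and then sum over the finitely many admissible $d$'s. For the lower bound on $\P(>t)$ (hence the $\limsup$ claim), I would restrict the counting inside each summand to $\mathbf{y}\in\textrm{PC}$: pairwise coprimality forces $\gcd(\mathbf{y})=1$ and also $\lcm(\mathbf{y})=y_1\cdots y_r$, so the $\lcm$ condition reduces to $t\,d^{r-1}<1$, which already holds for the $d$'s in the sum. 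Applying Lemma~\ref{lemma:PC} with $n/d$ in place of $n$ gives $\#\{\mathbf{y}\le n/d:\mathbf{y}\in\textrm{PC}\}/n^r=T_r/d^r+o(1)$, and summing yields the desired lower bound.

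For the final equality statement in the case $r=2$, I would just observe that pairwise coprimality coincides with $\gcd=1$ when $r=2$, so the two asymptotic counts agree (note $T_2=1/\zeta(2)$), and the $\liminf$ and $\limsup$ bounds collapse to the same value. I do not anticipate a real obstacle here: the only mildly delicate point is to make sure that the sum defining the probability is truly finite for fixed $t$ (so that taking $n\to\infty$ term by term is legitimate), but the monotonicity $\lcm(\mathbf{y})\le y_1\cdots y_r$ handles this cleanly.
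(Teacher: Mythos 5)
Your argument is correct and is essentially the adaptation the paper itself has in mind (it derives this proposition by the same scheme as Theorem~\ref{teor:pdf lcm-r}): partition by $\gcd(\mathbf{x})=d$ and use the coprime count for the $1/\zeta(r)$ bound, then restrict to $\mathbf{y}\in\mathrm{PC}$ (equivalently, to tuples with all pairwise gcds equal) for the $T_r$ bound, with the extra simplification — which you correctly exploit — that after normalizing by the product the lcm condition degenerates into the pure condition $td^{r-1}<1$, so no volume function $\Omega_r$ appears and each sum is finite. The $r=2$ equality for the same reason you give ($\mathrm{PC}=\{\gcd=1\}$ and $T_2=1/\zeta(2)$) matches the paper.
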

Of course, the statement above for $r=2$ gives the asymptotic behavior of the distribution function of $1/\gcd(X^{(n)}_1, X^{(n)}_2)$. The limiting distribution is discrete: it assigns mass $\frac{1}{\zeta(2)k^2}$ to the point ${1}/{k}$, for every integer $k \ge 1$; in contrast to the  limit distribution in
Theorem \ref{theor:lcm of pairs}, part (a), which has no point masses.

For moments, we have:
\begin{prop}\label{prop:moments_lcm_divide_by_product_r}
For $r \ge 2$, and integer $q \ge 1$,
$$
\begin{aligned}
\limsup_{n \to \infty} \E\Big(\Big(\frac{\lcm\big(X^{(n)}_1, \dots,X^{(n)}_r\big)}{X^{(n)}_1\cdots X^{(n)}_r}\Big)^q\Big) &\le \frac{1}{\zeta(r)}\,\zeta\big(r(q+1)-q\big)\,,
\\
\liminf_{n \to \infty} \E\Big(\Big(\frac{\lcm\big(X^{(n)}_1, \dots,X^{(n)}_r\big)}{X^{(n)}_1\cdots X^{(n)}_r}\Big)^q\Big) &\ge \ T_r \ {\zeta\big(r(q+1)-q\big)}\,.
\end{aligned}
$$
Actually, for $r=2$, we have equality for every integer $q\ge 1:$
$$
\lim_{n \to \infty} \E\Big(\Big(\frac{\lcm\big(X^{(n)}_1, X^{(n)}_2\big)}{X^{(n)}_1\cdot X^{(n)}_2}\Big)^q\Big) = \frac{1}{\zeta(2)}\,\zeta\big(q+2\big)\,.
$$
\end{prop}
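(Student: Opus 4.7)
The plan is to reduce the problem to the arguments already deployed for Theorem~\ref{teor:moments lcm-r} by pulling out the gcd factor. Writing $d=\gcd(\mathbf{x})$ and $x_i=dy_i$, one has $\gcd(\mathbf{y})=1$ and
$$
\frac{\lcm(\mathbf{x})}{x_1\cdots x_r}=\frac{\lcm(\mathbf{y})}{d^{\,r-1}\, y_1\cdots y_r}\,.
$$
So the $q$-th moment splits as
$$
\E\Big(\Big(\tfrac{\lcm(\mathbf{x})}{x_1\cdots x_r}\Big)^q\Big)=\frac{1}{n^r}\sum_{d\le n}\frac{1}{d^{q(r-1)}}\sum_{\substack{\mathbf{y}\le n/d\\\gcd(\mathbf{y})=1}}\Big(\frac{\lcm(\mathbf{y})}{y_1\cdots y_r}\Big)^{\!q}.
$$

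For the upper bound I would use the trivial estimate $\lcm(\mathbf{y})\le y_1\cdots y_r$, so the inner sum is bounded by the number of coprime $r$-tuples below $n/d$, which by the $k=1$ case of \eqref{eq:intro mass function gcd-r} equals $(n/d)^r/\zeta(r)+O((n/d)^{r-1})$. Plugging this in gives
$$
\E\le \frac{1}{\zeta(r)}\sum_{d\le n}\frac{1}{d^{\,r(q+1)-q}}+O\Big(\frac{1}{n}\sum_{d\le n}\frac{1}{d^{\,r(q+1)-q-1}}\Big),
$$
and since $r(q+1)-q\ge 3$ and $r(q+1)-q-1\ge 2$ for $r\ge 2,\,q\ge 1$, the error is $O(1/n)$ and the main term tends to $\zeta(r(q+1)-q)/\zeta(r)$, as desired.

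For the lower bound I would mimic the proof of \eqref{eq:intro, moments lcm rge2-conTr}: restrict the outer sum to those $r$-tuples for which $\gcd(x_i,x_j)=k$ for all $i\ne j$. Under this constraint, writing $x_i=ky_i$ forces $\mathbf{y}\in\mathrm{PC}$, so $\lcm(\mathbf{y})=y_1\cdots y_r$ exactly and the ratio equals $1/k^{r-1}$. Consequently
$$
\E\ge \sum_{k\le n}\frac{1}{k^{q(r-1)+r}}\cdot\frac{1}{(n/k)^r}\,\#\{\mathbf{y}\le n/k:\mathbf{y}\in\mathrm{PC}\}.
$$
By Lemma~\ref{lemma:PC} the $k$-th summand tends to $T_r/k^{r(q+1)-q}$, and it is dominated by $1/k^{r(q+1)-q}$, a summable sequence. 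Dominated convergence yields $\liminf \E\ge T_r\,\zeta(r(q+1)-q)$.

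Finally, for $r=2$ both bounds coincide because $T_2=1/\zeta(2)$, so the squeeze gives equality. (Alternatively, one can note that $\lcm(x_1,x_2)/(x_1x_2)=1/\gcd(x_1,x_2)$ and apply the mass function from Theorem~\ref{theor:gcd of pairs}\,a) directly, which offers a clean independent verification.) The only mildly delicate point is justifying the passage to the limit in the lower bound, for which dominated convergence with the summable majorant $1/k^{r(q+1)-q}$ is exactly what is needed; no other step presents any real obstacle.
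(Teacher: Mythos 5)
Your argument is correct and is precisely the ``similar, but simpler'' adaptation of the proof of Theorem~\ref{teor:moments lcm-r} that the paper has in mind: pull out $d=\gcd(\mathbf{x})$ and bound the ratio by $1$ to get the upper estimate via the coprime-tuple count, and restrict to tuples with constant pairwise gcd plus dominated convergence for the lower one. The only cosmetic point is that for $r=2$ the coprime-pair count carries an error $O(z\ln z)$ rather than $O(z^{r-1})$ (so one should cite Theorem~\ref{theor:gcd of pairs}\,a) rather than \eqref{eq:intro mass function gcd-r} in that case), which still yields an overall $O(\ln(n)/n)$ error and does not affect the conclusion.
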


\section{The case $r=3$}\label{lcm for r=3}

The analysis of this case $r=3$ rests on a particular partition of the triples of integers which is based in the following factorization lemma:
\begin{lemma}\label{lemma:triple_representation}
To any triple of  integers $(x,y,z)$ we may assign uniquely  an integer~$D$ and triples of integers $(a,b,c)\in \text{\upshape PC}$ and $(u,v,w)\in \text{\upshape PC}_{(c,b,a)}$ so that
$$
x=D \, (ab)\, u,\quad
y=D\,(ac)\,v,\quad
z=D \,(bc)\, w.
$$
In fact, $
D=\gcd(x,y,z)
$,
\begin{equation}
\begin{cases}
a=\gcd(x,y)/D,\\
b=\gcd(x,z)/D,\\
c=\gcd(y,z)/D,
\end{cases} \quad\text{and}\quad
\begin{cases}
u={x}/{(Dab)},\\
v={y}/{(Dac)},\\
w={z}/{(Dbc)}.
\end{cases}
\end{equation}
Moreover,
$$
\lcm(x,y,z)=D (abc)(uvw)\,.
$$
\end{lemma}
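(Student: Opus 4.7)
The plan is to take the displayed formulas as definitions of $D, a, b, c, u, v, w$ and verify the stated properties in order. The integrality of $a, b, c$ is immediate, since $D = \gcd(x,y,z)$ divides every pairwise gcd. Next I would show $(a,b,c) \in \text{PC}$ by contradiction: a prime $p$ dividing both $a$ and $b$ would force $pD$ to divide $\gcd(x,y)$ and $\gcd(x,z)$, hence each of $x, y, z$, contradicting the maximality of $D$. Once $\gcd(a,b)=1$ is established, the divisibilities $Da \mid x$ and $Db \mid x$ combine to give $Dab \mid x$, so $u$ is a positive integer; analogously for $v$ and $w$.

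To obtain $(u,v,w)\in\text{PC}_{(c,b,a)}$ I would compute
$$
Da = \gcd(x,y) = \gcd(Dabu, Dacv) = Da\cdot\gcd(bu, cv),
$$
which forces $\gcd(bu,cv)=1$ and hence simultaneously $\gcd(u,v)=\gcd(u,c)=\gcd(v,b)=1$; the analogous identities from $\gcd(x,z)=Db$ and $\gcd(y,z)=Dc$ supply the remaining relations $\gcd(u,w)=\gcd(w,a)=\gcd(v,w)=1$. For uniqueness, if another tuple $(D',a',b',c',u',v',w')$ satisfied the properties, then the same coprimality constraints would force $\gcd(a'b'u', a'c'v', b'c'w')=1$ (by a short case split on which of $a',b',c',u',v',w'$ a prime could divide), yielding $D' = \gcd(x,y,z) = D$; the identity $\gcd(x,y) = D'a'$ then forces $a'=a$, and similarly $b'=b$, $c'=c$, $u'=u$, $v'=v$, $w'=w$.

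The $\lcm$ identity I would verify prime by prime. Fix a prime $p$ and set $\delta=v_p(D)$, $\alpha=v_p(a)$, $\beta=v_p(b)$, $\gamma=v_p(c)$, $\mu=v_p(u)$, $\nu=v_p(v)$, $\omega=v_p(w)$. The constraints above translate into: at most one of $\alpha,\beta,\gamma$ is positive, at most one of $\mu,\nu,\omega$ is positive, and $\min(\mu,\gamma)=\min(\nu,\beta)=\min(\omega,\alpha)=0$. A brief case analysis then shows
$$
\max\{v_p(x), v_p(y), v_p(z)\} = \delta+\alpha+\beta+\gamma+\mu+\nu+\omega = v_p\big(D\cdot abc\cdot uvw\big),
$$
giving the desired identity. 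The only real obstacle is the bookkeeping in this last step; once the coprimality constraints have been listed, every case is immediate.
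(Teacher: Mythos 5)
Your verification is correct and is precisely the ``direct'' argument the paper alludes to but does not write out (the paper's proof consists of the single remark that the proof is direct, with the reminder that $(u,c)$, $(v,b)$, $(w,a)$ must be coprime pairs --- exactly the constraints you isolate and use). All the key steps check out: the identity $\gcd(Dabu,Dacv)=Da\,\gcd(bu,cv)$ and its two analogues deliver the full membership in $\mathrm{PC}_{(c,b,a)}$, and your list of $p$-adic constraints is exactly what excludes the bad pairings (e.g.\ $\mu,\gamma>0$) that would break the $\lcm$ identity.
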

The proof is direct; we just insist that $(u,c), (v,b), (w,a)$ are required to be coprime couples. For pairs of integers $(x,y)$ the analogous representation is $x=Du, y=Dv$, with $D=\gcd(x,y)$ and $u,v$ coprime.

\subsection{Proof of part a) of Theorem \ref{teor:lcm-rigual3}}

Fix an integer $n\ge 1$ and $0 <t \le 1$. We follow Lemma \ref{lemma:triple_representation} and partition the required counting:
\begin{align*}
\#\{1 \le &\,x,y,z \le n\,; \lcm(x,y,z)>t n^3\}
\\
&=
\sum_{D=1}^\infty \sum_{(a,b,c) \in \textrm{PC}} \#\Bigg\{\ \begin{aligned}&u\le n/(Dab)\\ &v\le n/(Dac)\\&w\le n/(Dbc)\end{aligned} : (u,v,w)\in\textrm{PC}_{(c,b,a)}, \
 \ uvw > t\frac{n^3}{Dabc}
\Bigg\}
\end{align*}
Now, according to \eqref{eq:equidistribution of PCE in Ar}, the argument of this double sum, for fixed $D$ and fixed $a,b,c$, is asymptotically
$$
\sim n^3 \, T_3 \, \Upsilon_3(abc) \, \frac{1}{D^3 (abc)^2}\,\Big(1-\Omega_3(tD^2 abc)\Big)\,
$$
and is bounded by
$$
\le n^3 \, \frac{1}{D^3 (abc)^2}\,
$$
Since
$$
\sum_{D=1}^\infty \sum_{(a,b,c) \in \textrm{PC}}\frac{1}{D^3 (abc)^2}=\zeta(3)\sum_{(a,b,c) \in \textrm{PC}}\frac{1}{(abc)^2}=\zeta(3) \sum_{m=1}^\infty \frac{3^{\omega(m)}}{m^2} < +\infty\,,
$$
dominated convergence gives the result. Here we have used that for any given integer $m$ there are exactly $3^{\omega(m)}$ triples $(a,b,c) \in \textrm{PC}$ such that $m=abc$.

\begin{remark}\label{remark:check T(2)}
{\upshape For $t=0$, equation \eqref{eq:pdf lcm-rigual3} reads $T_3\,\zeta(3)\,J(2) =1$. This begs to be proved directly. Observe that the function $\hat\Upsilon_3(m)=\Upsilon_3(m)3^{\omega(m)}$ is multiplicative, and that, for any prime $p$ and any positive integer $a$, $\hat\Upsilon_3(p^a)=\hat\Upsilon_3(p)=3\frac{1+1/p}{1+2/p}$. We can write the Dirichlet series defined in \eqref{eq:definition of T(s)} as a product over primes:
$$
J(s)=\sum_{m=1}^\infty \frac{\tilde\Upsilon_3(m)}{m^s}=\prod_p \Big(1+\frac{\tilde\Upsilon_3(p)}{p^s}+\frac{\tilde\Upsilon_3(p^2)}{p^{2s}}+\cdots)=\prod_p \Big(1+\frac{3(p+1)}{(p+2)(p^s-1)}\Big),
$$
so
$$
J(2)=\prod_p \Big(1+\frac{3}{(p+2)(p-1)}\Big)=\prod_p \Big(\frac{p^2+p+1}{(p+2)(p-1)}\Big).
$$
The reader may check, from the definition of $T_3$ in \eqref{eq:definition of Tr} and the Euler product expression for $\zeta(s)=\prod_{p}\frac{1}{1-p^{-s}}$, that $T_3\,\zeta(3)=\prod_p\frac{(p-1)(p+2)}{p^2+p+1}$.}
\end{remark}

\subsection{Proof of part b) of Theorem \ref{teor:lcm-rigual3}}

Fix $q\ge 1$. Lemma \ref{lemma:triple_representation} allow us to write
$$
\sum_{x,y,z \le n} \lcm(x,y,z)^q=\sum_{D=1}^\infty \sum_{(a,b,c)\in \textrm{PC}} \Big(D^q (abc)^q {\sum}^{\prime}_{D;\, a,b,c} (uvw)^q\Big)\,,
$$
where for $D$ and $(a,b,c)$ fixed, the corresponding sum ${\sum}^{\prime}_{D;\, a,b,c}$ extends over
$$
\big\{(u,v,w) \in \textrm{PC}, \ u \le n/[D(ab)],\
v \le n/[D(ac)], \
w \le n/[D(bc)]
\big\}.
$$

By \eqref{eq:equidistribution for moments PCE}, each  ${\sum}^{\prime}_{D;\, a,b,c}$ is seen to be, asymptotically
$$
{\sum}^{\prime}_{D;\, a,b,c} \ \sim n^{3q+3}\, T_3 \ \Upsilon_3(abc) \frac{1}{(q+1)^3} \Big(\frac{1}{D^3 (abc)^2}\Big)^{q+1}, \quad \text{as} \ n \to \infty\,,
$$
and is bounded by
$$
{\sum}^{\prime}_{D;\, a,b,c}  \ \le n^{3q+3} \Big(\frac{1}{D^3 (abc)^2}\Big)^{q+1}\, .
$$
Since
$$
\sum_{D=1}^\infty \sum_{(a,b,c)\in \textrm{PC}} D^q (abc)^q \Big(\frac{1}{D^3 (abc)^2}\Big)^{q+1}=\zeta(2q+3)\sum_{m=1}^\infty \frac{3^{\omega(m)}}{m^{q+2}} <\infty,
$$
dominated convergence gives that
$$
\lim_{n \to \infty}\frac{1}{n^{3q+3}} \sum_{x,y,z \le n} \lcm(x,y,z)^q=T_3 \frac{1}{(q+1)^3}\zeta(2q+3)\sum_{m=1}^\infty \frac{\Upsilon_3(m)\, 3^{\omega(m)}}{m^{q+2}}
,
$$
as claimed in \eqref{eq:moments lcm-rigual3}.

\subsection{Case $r=3$, with alternative normalization}

We have for every $0 < t \le 1$,
$$
\lim_{n \to \infty} \P \Big(\Big(\frac{\lcm\big(X^{(n)}_1, X^{(n)}_2,X^{(n)}_3\big)}{X^{(n)}_1\ X^{(n)}_2\ X^{(n)}_3}\Big) \le t\Big)=T_3 \sum_{D^2 m \ge 1/t} \frac{1}{D^3} \frac{\Upsilon_3(m) 3^{\omega(m)}}{m^2}\, .
$$
Also, for integer $q \ge 1$:
$$
\lim_{n \to \infty}  \E \Big(\Big(\frac{\lcm\big(X^{(n)}_1, X^{(n)}_2,X^{(n)}_3\big)}{X^{(n)}_1\ X^{(n)}_2\ X^{(n)}_3}\Big)^q\Big)=T_3 \ \zeta(2q+3) J(2+q)\, .
$$

\section{Waiting times}\label{sec:waiting times}
Fix $n\ge 2$ and consider the following experiment: draw integers
$x_1,x_2,\dots$ uniformly and independently from $\{1,\dots,n\}$,
and calculate the sequences of successive gcds and lcms:
\begin{align*}
&z_1=x_1,\ z_2=\gcd(x_1,x_2),\ z_3=\gcd(x_1,x_2,x_3),\dots
\\
&w_1=x_1,\ w_2=\lcm(x_1,x_2),\ w_3=\lcm(x_1,x_2,x_3),\dots
\end{align*}
The sequence $(z_j)$ is decreasing, and each $z_j\ge 1$, while the
sequence $(w_j)$ is increasing and each
$w_j \le \lcm(1,\dots,n)$.

For each of these random sequences we are interested in the random
variable that registers the first time when they reach their
respective limiting values. Again, the case of the lcm is quite more involved than the case of the gcd.

\subsection{Waiting time for the gcd}\label{subsec:wt for gcd}
For fixed $n$, consider the (decreasing) sequence $(\mathcal{Z}_m^{(n)})$ of random variables given~by
$$
\mathcal{Z}_1^{(n)}=X_1^{(n)},\quad \mathcal{Z}_m^{(n)}=\gcd(\mathcal{Z}_{m-1}^{(n)}, X_m^{(n)})=\gcd(X_1^{(n)},\dots, X_m^{(n)})\quad\text{for $m\ge 2$;}
$$

Denote by $\mathcal{T}_n$ the first time when the sequence $(\mathcal{Z}_m^{(n)})$ reaches the value $1$. The variable $\mathcal{T}_n$ takes values $1,2,\dots$
\begin{lemma}
\label{lemma:waiting time for gcd}
{\rm a)} For fixed $n$,
$
\P(\mathcal{Z}_m^{(n)}=1)\to 1$ as $m\to\infty$.

\smallskip
\noindent {\rm b)} The mass function of $\mathcal{T}_n$ is given by
\begin{equation}\label{eq:dist of Tn gcd}
\P(\mathcal{T}_n>m)=-\sum_{k=2}^n \mu(k) \Big(\frac{1}{n}\Big\lfloor \frac{n}{k}\Big\rfloor\Big)^m\quad\text{for $m\ge 1$}.
\end{equation}
and, for each $m\ge 1$,
\begin{equation}\label{eq:dist of T gcd}
\lim_{n\to\infty}\P(\mathcal{T}_n> m)=1-\frac{1}{\zeta(m)}.
\end{equation}
\end{lemma}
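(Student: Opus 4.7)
The whole lemma is essentially a direct corollary of Ces\`{a}ro's formula \eqref{eq:Cesaro sum} applied with $r=m$ and $f=\delta_1$ (the arithmetic function which is $1$ at $1$ and $0$ elsewhere), the same move that led to \eqref{eq:prob gcd=1 in terms of Cesaro}. For part (a), one need not even invoke the formula: if any one of $X_1^{(n)},\dots,X_m^{(n)}$ equals $1$, the running gcd is already $1$, so
$$
\P\big(\mathcal{Z}_m^{(n)}>1\big)\le \P\big(X_j^{(n)}\ne 1,\ j=1,\dots,m\big)=\Big(1-\frac{1}{n}\Big)^m,
$$
which tends to $0$ as $m\to\infty$ for fixed $n$.

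For the mass function formula in part (b), I would start from $\{\mathcal{T}_n>m\}=\{\gcd(X_1^{(n)},\dots,X_m^{(n)})>1\}$ and write
$$
\P(\mathcal{T}_n>m)=1-\P\big(\gcd(X_1^{(n)},\dots,X_m^{(n)})=1\big)=1-\sum_{k=1}^n \mu(k)\Big(\frac{1}{n}\Big\lfloor\frac{n}{k}\Big\rfloor\Big)^m,
$$
where the second equality is Ces\`{a}ro's formula with $f=\delta_1$, just as in \eqref{eq:prob gcd=1 in terms of Cesaro}. The $k=1$ term contributes $\mu(1)\cdot 1=1$, which cancels the leading $1$ and yields \eqref{eq:dist of Tn gcd}.

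For the limit \eqref{eq:dist of T gcd} with $m\ge 2$, the pointwise bound $\lfloor n/k\rfloor/n\le 1/k$ provides the summable majorant $\{1/k^m\}_{k\ge 2}$; since each summand in \eqref{eq:dist of Tn gcd} converges termwise to $-\mu(k)/k^m$ as $n\to\infty$, dominated convergence gives
$$
\lim_{n\to\infty}\P(\mathcal{T}_n>m)=-\sum_{k=2}^\infty \frac{\mu(k)}{k^m}=1-\sum_{k=1}^\infty\frac{\mu(k)}{k^m}=1-\frac{1}{\zeta(m)}.
$$
The border case $m=1$ must be treated separately because $\sum \mu(k)/k$ fails to converge absolutely; but here $\P(\mathcal{T}_n>1)=\P(X_1^{(n)}>1)=1-1/n\to 1$, matching $1-1/\zeta(1)$ under the natural convention $1/\zeta(1)=0$. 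The only real subtlety is thus securing the dominated-convergence step at $m=2$, which is still fine since $\sum 1/k^2<\infty$; all other steps are mechanical consequences of the M\"{o}bius--Ces\`{a}ro machinery already developed in Section~\ref{section:gcd of r}.
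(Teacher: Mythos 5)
Your proposal is correct, and part (b) follows exactly the paper's route: identify $\{\mathcal{T}_n>m\}$ with $\{\gcd(X_1^{(n)},\dots,X_m^{(n)})>1\}$, apply Ces\`aro's formula with $f=\delta_1$ (i.e.\ \eqref{eq:prob gcd=1 in terms of Cesaro} with $r=m$), cancel the $k=1$ term, and pass to the limit using the summable majorant $1/k^m$ for $m\ge2$, with $m=1$ checked directly from $\P(\mathcal{T}_n>1)=1-1/n$. The only genuine divergence is in part (a): the paper reuses the M\"obius-sum representation to get the quantitative bound $|1-\P(\mathcal{Z}_m^{(n)}=1)|\le\sum_{k\ge2}k^{-m}\le 3\cdot 2^{-m}$, whereas you observe that the running gcd is already $1$ as soon as some $X_j^{(n)}=1$, giving $\P(\mathcal{Z}_m^{(n)}>1)\le(1-1/n)^m$. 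Your argument is more elementary and avoids any number theory; the paper's version buys a decay rate in $m$ that is uniform in $n$ (geometric with ratio $1/2$ rather than $1-1/n$), though nothing in the lemma requires that uniformity. Both are complete proofs.
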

Notice that the case $m=1$ of \eqref{eq:dist of Tn gcd} reduces to
$\P(\mathcal{T}_n>1)=1-{1}/{n}$. The case $m=1$ of
\eqref{eq:dist of T gcd} is then obvious.
\begin{proof}
a) Observe that $\P(\mathcal{Z}_1^{(n)}=1)=1/n$. Recall from \eqref{eq:prob gcd=1 in terms of Cesaro} that, for $m\ge 2$,
$$
\P(\mathcal{Z}_m^{(n)}=1)=\P(\gcd(X_1^{(n)},\dots, X_m^{(n)})=1)=\frac{1}{n^m}\sum_{k=1}^n \mu(k)\, \Big\lfloor \frac{n}{k}\Big\rfloor^m ,
$$
so
$$
|1-\P(\mathcal{Z}_m^{(n)}=1)|\le \sum_{k=2}^\infty \frac{1}{k^m}\le \frac{3}{2^m}\,
$$
and therefore,
$\P(\mathcal{Z}_m^{(n)}=1)\to 1 $ as $m\to\infty$ (for fixed $n$).

\newpage

b) Observe that, for each $m\ge 1$, the events $\{\mathcal{T}_n\le m\}$ and $\{\mathcal{Z}_m^{(n)}=1\}$ coincide,
 and therefore,
\begin{align*}
\P\{\mathcal{T}_n> m\}&=1-\P\{\mathcal{T}_n\le m\}=1-
\frac{1}{n^m}\sum_{k=1}^n \mu(k)\, \Big\lfloor \frac{n}{k}\Big\rfloor^m =-\sum_{k=2}^n \mu(k)\, \Big(\frac{1}{n}\Big\lfloor \frac{n}{k}\Big\rfloor\Big)^m.
\end{align*}
From here one deduces immediately that, for $m \ge 2$,
$$\Big|\P\big(\mathcal{T}_n>
m\big)-\Big(1-\frac{1}{\zeta(m)}\Big)\Big|=O\Big(\frac{\ln(n)}{n}\Big)\,
.
$$ This gives \eqref{eq:dist of T gcd} for each $m\ge 2$.
\end{proof}
The above result tells us that $\mathcal{T}_n$ converges in
distribution to $\mathcal{T}$, where $\mathcal{T}$ is the random
variable given by $\P(\mathcal{T}\le m)=1/\zeta(m)$. Formulas for
the expected values of these variables,~$\mathcal{T}_n$
and~$\mathcal{T}$, are given in the following result:

\begin{theorem}
\label{teor:mean of waiting time for gcd}
\begin{equation}\label{eq:mean T lcm}
\lim_{n\to\infty}\E(\mathcal{T}_n)=1-\sum_{k=2}^\infty \frac{\mu(k)}{k-1}=\E(\mathcal{T})=2+\sum_{m=2}^\infty \Big(1-\frac{1}{\zeta(m)}\Big) \,.
\end{equation}
\end{theorem}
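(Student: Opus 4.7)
The plan is to work with the tail-sum identity $\E(\mathcal{T}_n)=\sum_{m=0}^\infty \P(\mathcal{T}_n>m)$ and pass to the limit $n\to\infty$ termwise. Combined with the limits supplied by Lemma~\ref{lemma:waiting time for gcd}(b), this will simultaneously yield $\lim_n \E(\mathcal{T}_n)=\E(\mathcal{T})$ and the representation $2+\sum_{m=2}^\infty(1-1/\zeta(m))$; the remaining identity with $1-\sum_{k=2}^\infty \mu(k)/(k-1)$ is then a purely algebraic rearrangement.

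The key step, and the main obstacle, is to justify the interchange of limit and sum. I plan to do this by dominated convergence, supplying a bound on $\P(\mathcal{T}_n>m)$ that is uniform in $n$ and summable in $m$. The natural bound comes from a union bound over primes: since $\{\mathcal{T}_n>m\}=\{\gcd(X_1^{(n)},\dots,X_m^{(n)})>1\}$ is contained in $\bigcup_{p\le n}\{p\mid X_i^{(n)}\text{ for every }i\}$, independence together with $\P(p\mid X_i^{(n)})\le 1/p$ gives $\P(\mathcal{T}_n>m)\le \zeta(m)-1$ for $m\ge 2$. The dominator is summable, since
\[
\sum_{m=2}^\infty(\zeta(m)-1)=\sum_{k=2}^\infty\sum_{m=2}^\infty k^{-m}=\sum_{k=2}^\infty \frac{1}{k(k-1)}=1.
\]
Together with $\P(\mathcal{T}_n>0)=1$ and $\P(\mathcal{T}_n>1)=1-1/n\to 1$, dominated convergence and Lemma~\ref{lemma:waiting time for gcd}(b) yield
\[
\lim_{n\to\infty}\E(\mathcal{T}_n)= 2+\sum_{m=2}^\infty\Bigl(1-\frac{1}{\zeta(m)}\Bigr),
\]
which coincides with $\E(\mathcal{T})$ by applying the same tail-sum identity to the limiting distribution.

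To conclude, I would derive the other closed form by a Möbius manipulation. Substituting $1-1/\zeta(m)=-\sum_{k\ge 2}\mu(k)/k^m$ and swapping the order of summation (legitimate by the absolute bound $\sum_{k,m\ge 2} k^{-m}=1$ just established), the inner geometric sum in $m$ evaluates to $1/(k(k-1))$, and the partial-fraction decomposition $\tfrac{1}{k(k-1)}=\tfrac{1}{k-1}-\tfrac{1}{k}$ then gives
\[
\sum_{m=2}^\infty\Bigl(1-\frac{1}{\zeta(m)}\Bigr)=-\sum_{k=2}^\infty \frac{\mu(k)}{k-1}+\sum_{k=2}^\infty\frac{\mu(k)}{k}.
\]
Invoking Landau's identity $\sum_{k=1}^\infty \mu(k)/k=0$, equivalent to the prime number theorem, one has $\sum_{k\ge 2}\mu(k)/k=-1$, so $\E(\mathcal{T})=2-1-\sum_{k\ge 2}\mu(k)/(k-1)=1-\sum_{k\ge 2}\mu(k)/(k-1)$. (The series $\sum_{k\ge 2}\mu(k)/(k-1)$ itself converges upon writing $\mu(k)/(k-1)=\mu(k)/k+\mu(k)/(k(k-1))$, whose second summand is absolutely convergent.) This closes the chain of equalities, with the deepest ingredient being the PNT-equivalent Landau identity.
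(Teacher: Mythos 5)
Your proof is correct, and it runs the paper's argument in the opposite order. The paper starts from the same tail-sum identity $\E(\mathcal{T}_n)=\sum_{m\ge 0}\P(\mathcal{T}_n>m)$ but, at finite $n$, substitutes the explicit formula \eqref{eq:dist of Tn gcd}, swaps the sums over $m$ and $k$, and evaluates the geometric series in $m$ to get
\[
\E(\mathcal{T}_n)=2-\frac{1}{n}-\sum_{k=2}^n \mu(k)\,\frac{(\frac{1}{n}\lfloor\frac{n}{k}\rfloor)^2}{1-\frac{1}{n}\lfloor\frac{n}{k}\rfloor}\,;
\]
dominated convergence is then applied in the Möbius index $k$ (with dominator $1/(k(k-1))$), which produces the closed form $1-\sum_{k\ge 2}\mu(k)/(k-1)$ first, the $\zeta$-series form being identified afterwards. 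You instead apply dominated convergence in the time index $m$, taking the termwise limit $\P(\mathcal{T}_n>m)\to 1-1/\zeta(m)$ from Lemma \ref{lemma:waiting time for gcd}(b); the genuinely new ingredient is your uniform-in-$n$ dominator $\P(\mathcal{T}_n>m)\le\zeta(m)-1$, obtained from a union bound over primes rather than from the signed Möbius expansion, with $\sum_{m\ge2}(\zeta(m)-1)=1$ guaranteeing summability. This gets you $2+\sum_{m\ge2}(1-1/\zeta(m))$ directly, and your subsequent Möbius rearrangement (justified by the absolute bound $\sum_{k,m\ge2}k^{-m}=1$) recovers the other closed form; both routes ultimately invoke Landau's $\sum_{k\ge1}\mu(k)/k=0$ at the same point. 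Your version has the small advantage of not needing the exact formula \eqref{eq:dist of Tn gcd} for the interchange step, only the positivity bound and the limit from the lemma; the paper's version yields the finite-$n$ closed expression for $\E(\mathcal{T}_n)$ as a by-product.
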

The numerical value of $\E(\mathcal{T})$ is around 2,7052; by the
way, the sequence $\E(\mathcal{T}_n)$ is not increasing.
\begin{proof} Changing the summation order,
\begin{align*}
\E(\mathcal{T}_n)&=\sum_{m=0}^\infty \P(\mathcal{T}_n>m)=2-\frac{1}{n}-\sum_{m=2}^\infty \sum_{k=2}^n \mu(k) \Big(\frac{1}{n}\Big\lfloor \frac{n}{k}\Big\rfloor\Big)^m
\\
&=2-\frac{1}{n}- \sum_{k=2}^n \mu(k) \sum_{m=2}^\infty\Big(\frac{1}{n}\Big\lfloor \frac{n}{k}\Big\rfloor\Big)^m=2-\frac{1}{n}-\sum_{k=2}^n \mu(k) \frac{(\frac{1}{n}\lfloor\frac{n}{k}\rfloor)^2}{1-\frac{1}{n}\lfloor\frac{n}{k}\rfloor}.
\end{align*}
Now observe that
$$
\frac{(\frac{1}{n}\lfloor\frac{n}{k}\rfloor)^2}{1-\frac{1}{n}\lfloor\frac{n}{k}\rfloor}\le \frac{1}{k(k-1)}\quad\text{and}\quad \lim_{n\to\infty}\frac{(\frac{1}{n}\lfloor\frac{n}{k}\rfloor)^2}{1-\frac{1}{n}\lfloor\frac{n}{k}\rfloor}= \frac{1}{k(k-1)}.
$$
As $\sum_{k\ge 2} 1/(k(k-1))=1$, by dominated convergence,
$$
\lim_{n\to\infty} \E(\mathcal{T}_n)=2-\sum_{k=2}^\infty \frac{\mu(k)}{k(k-1)}=2-\sum_{k=2}^\infty \mu(k) \Big(\frac{1}{k-1}-\frac{1}{k}\Big)=1-\sum_{k=2}^\infty \frac{\mu(k)}{k-1}\,,
$$
where we have used Landau's classical result (see \cite{Landau}) that $\sum_{k=1}^\infty \mu(k)/k=0$. It can be seen seen (again by dominated convergence) that this sum coincides with
$$
\E(\mathcal{T})=2+\sum_{m=2}^\infty \Big(1-\frac{1}{\zeta(m)}\Big).\qedhere
$$
\end{proof}

\subsection{Waiting for the lcm}\label{subsec:wt for lcm}
Consider now the (increasing) sequence $(\mathcal{W}_m^{(n)})$ of random variables given~by
$$
\mathcal{W}_1^{(n)}=X_1^{(n)},\quad \mathcal{W}_m^{(n)}=\lcm(\mathcal{W}_{m-1}^{(n)}, X_m^{(n)})=\lcm(X_1^{(n)},\dots, X_m^{(n)})\quad\text{for $m\ge 2$;}
$$
Let us denote again by $\mathcal{T}_n$ the first time when the sequence $(\mathcal{W}_m^{(n)})$ reaches its limiting value, $\lcm(1,\dots,n)$.

\smallskip
In the analysis of $\mathcal{T}_n$ we will use the following notation: for each prime $p\le n$, denote by $\gamma_p(n)$ the integer such that
\begin{equation}
\label{eq:def of gammap(n)}
p^{\gamma_p(n)}\le n \quad\text{and}\quad p^{\gamma_p(n)+1} >n.
\end{equation}
In formula, $\gamma_p(n)=\lfloor \ln n/\ln p\rfloor$. Notice that
$$
\lcm(1,\dots,n)=\prod_{p\le n} p^{\gamma_p(n)}.
$$

Now define $\beta_p(n)$ as the positive integer satisfying
\begin{equation}
\label{eq:def of betap(n)}
p^{\gamma_p(n)}\, \beta_p(n)\le n \quad\text{and}\quad p^{\gamma_p(n)}\, (\beta_p(n)+1) >n,
\end{equation}
so that $\beta_p(n)=\lfloor n/p^{\gamma_p(n)}\rfloor$. Observe that $1\le \beta_p(n)<p$.

For each prime $p\le n$, we consider the set $\mathcal{C}_p(n)$ of integers given by
$$
\mathcal{C}_p(n)=\{p^{\gamma_p(n)}, 2p^{\gamma_p(n)},\dots,\beta_p(n)\, p^{\gamma_p(n)}\}.
$$
It is immediate to check that, for fixed $n$,  the sets $\mathcal{C}_p(n)$ and $\mathcal{C}_q(n)$ are disjoint if $p$ and $q$ are different primes. Finally, call
$$
\mathcal{C}^*(n)=\{1,\dots,n\}-\bigcup_{p\le n} \mathcal{C}_p(n).
$$

Observe that the event of interest, $\{\lcm(X_1^{(n)},\dots, X_m^{(n)})=\lcm(1,\dots,n)\}$,
can be written as
$$
\{\text{for each $p\le n$, at least one among
$X_1^{(n)},\dots,X_m^{(n)}$ belongs to $\mathcal{C}_p(n)$}\}.
$$

This observation allows us to rewrite the waiting time question as a ``weighted''
coupon collector problem: we draw coupons (independently) from
$\{1,\dots,n\}$, and to complete the collection of interest means to get, at
least, one coupon from each of the classes $\mathcal{C}_p(n)$, $p\le
n$. There are $\pi(n)$ different classes, where $\pi(n)$ denotes the
number of primes $\le n$, each one of them having ``weight''
$\beta_p(n)/n$. The coupons from the set $\mathcal{C}^*(n)$ are
useless for our objective. See, for instance, \cite{FGT}, \cite{BP},
or the survey \cite{BH} for information about a variety of coupon
collector problems.

In this language, the variable $\mathcal{T}_n$ registers
the time when the coupon collection is completed. Observe that $\P(\mathcal{T}_n>l)=1$ if $l<\pi(n)$.
In general,
$$
\{\mathcal{T}_n>l\}=\bigcup_{p\le n} A_p(l)\,,
$$
where $A_p(l)$ is the event in which, among the first $l$ coupons
drawn, none of them belongs to $\mathcal{C}_p(n)$. Applying the
inclusion/exclusion principle, we can write
\begin{align*}
\P(\mathcal{T}_n>l)&=\sum_{p\le n} \P(A_p(l))-\sum_{p< q\le n} \P(A_p(l)\cap A_q(l))+ \cdots
\\
&=\sum_{p\le n} \Big(1-\frac{\beta_p(n)}{n}\Big)^l- \sum_{p<q\le n} \Big(1-\frac{\beta_p(n)+\beta_q(n)}{n}\Big)^l+\cdots
\end{align*}

From this expression for the distribution function of $\mathcal{T}_n$, and following \cite{FGT}, we get a compact formula for the expected  waiting time: $\mathcal{T}_n$:
\begin{lemma}
\label{lemma:formula for E(Tn) lcm}For fixed $n$,
\begin{equation}
\label{eq:formula for E(Tn) lcm}\E(\mathcal{T}_n)=n\int_0^\infty \Big[1-\prod_{p\le n} (1-e^{-t\, \beta_p(n)})\Big]\, dt.
\end{equation}
\end{lemma}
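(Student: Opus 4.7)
The plan is to start from the standard tail formula
\[
\E(\mathcal{T}_n)=\sum_{l=0}^\infty \P(\mathcal{T}_n>l),
\]
insert the inclusion–exclusion expression for $\P(\mathcal{T}_n>l)$ already derived above (with the convention that at $l=0$ every term $(1-\cdots)^0$ equals $1$, and the inclusion–exclusion reduces to $\P(\mathcal{T}_n>0)=1$), and then sum the resulting geometric series term by term. Writing $\beta_S=\sum_{p\in S}\beta_p(n)$ for a nonempty subset $S$ of primes $\le n$, the tail becomes
\[
\P(\mathcal{T}_n>l)=\sum_{\emptyset\ne S\subseteq\{p\le n\}}(-1)^{|S|+1}\Bigl(1-\frac{\beta_S}{n}\Bigr)^l.
\]
Since there are only finitely many $S$ and, for each of them, $0<\beta_S/n\le 1$, absolute convergence permits the interchange of the $l$-sum and $S$-sum, giving
\[
\E(\mathcal{T}_n)=\sum_{\emptyset\ne S}(-1)^{|S|+1}\frac{n}{\beta_S}.
\]

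Next I would rewrite each reciprocal as the elementary integral $n/\beta_S=n\int_0^\infty e^{-t\beta_S}\,dt$, and interchange sum and integral (again a finite sum, so no convergence issue). This yields
\[
\E(\mathcal{T}_n)=n\int_0^\infty\sum_{\emptyset\ne S}(-1)^{|S|+1}\prod_{p\in S}e^{-t\beta_p(n)}\,dt.
\]
The integrand is precisely the combinatorial identity
\[
\sum_{\emptyset\ne S}(-1)^{|S|+1}\prod_{p\in S}x_p=1-\prod_{p\le n}(1-x_p),
\]
applied with $x_p=e^{-t\beta_p(n)}$, which immediately produces \eqref{eq:formula for E(Tn) lcm}.

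The only thing that requires any care is justifying the first interchange of summations and confirming convergence of the final integral. The former is automatic because for each fixed $l$ the $S$-sum is finite, and once reordered the $l$-sum for each $S$ is a convergent geometric series with ratio in $[0,1)$, so Fubini on $\mathbb{N}\times(\text{finite set})$ applies. For the integral: near $t=0$, each factor $1-e^{-t\beta_p(n)}\sim t\beta_p(n)$, so $\prod_{p\le n}(1-e^{-t\beta_p(n)})=O(t^{\pi(n)})$ and the integrand is bounded by $1$; near $t=\infty$, $1-\prod_p(1-e^{-t\beta_p(n)})=O(e^{-t\min_p\beta_p(n)})=O(e^{-t})$ since each $\beta_p(n)\ge 1$. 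Hence the integral converges. None of the steps presents any serious obstacle; the ``hard'' point, if any, is simply recognizing that the inclusion–exclusion sum telescopes back into the product $1-\prod_p(1-e^{-t\beta_p(n)})$, which is the same identity that underlies the classical coupon-collector expected-time formula cited in \cite{FGT}.
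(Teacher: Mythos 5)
Your proposal is correct and follows essentially the same route as the paper: both start from $\E(\mathcal{T}_n)=\sum_{l\ge 0}\P(\mathcal{T}_n>l)$, insert the inclusion--exclusion expansion, interchange the (finite) sum over subsets with the geometric series in $l$ to obtain $n\sum_{S}(-1)^{|S|+1}/\beta_S$, and then recombine via $1/\beta_S=\int_0^\infty e^{-t\beta_S}\,dt$ together with the identity $\sum_{\emptyset\ne S}(-1)^{|S|+1}\prod_{p\in S}x_p=1-\prod_p(1-x_p)$. The only difference is that you spell out this last recombination step and the convergence checks, which the paper leaves implicit with ``the identity now follows.''
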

\begin{proof}
Write
\begin{align*}
\E(\mathcal{T}_n)&=\sum_{l=0}^\infty \P(\mathcal{T}_n>l)=\sum_{l=0}^\infty  \Big(\sum_{p\le n}\Big(1-\frac{\beta_p(n)}{n}\Big)^l- \sum_{p\ne p} \Big(1-\frac{\beta_p(n)+\beta_q(n)}{n}\Big)^l+\cdots\Big)
\\
&=\sum_{p\le n} \sum_{l=0}^\infty \Big(1-\frac{\beta_p(n)}{n}\Big)^l-\sum_{p<q \le n} \sum_{l=0}^\infty \Big(1-\frac{\beta_p(n)+\beta_q(n)}{n}\Big)^l+\cdots
\\
&=n\Big(\sum_{p\le n} \frac{1}{\beta_p(n)}-\sum_{p< q \le n} \frac{1}{\beta_p(n)+\beta_q(n)}+\cdots\Big).
\end{align*}
The identity \eqref{eq:formula for E(Tn) lcm} now follows.
\end{proof}

 If we were to care just for the specific coupons $\{p^{\gamma_p(n)}\}_{p\le n}$ (the first coupon in each class $\mathcal{C}_p(n)$), then the time $\widetilde{\mathcal{T}}_n$ to collect all of them will be, of course, longer than the time $\mathcal{T}_n$; and, in particular, on average: $\E\big(\widetilde{\mathcal{T}}_n\big)\ge \E\big(\mathcal{T}_n\big)$  (see later \eqref{eq:comparing_con_tiempotilde} for a precise comparison).  In this case there are exactly $\pi(n)$ coupons of interest (with weight 1) out of a total on $n$ coupons and, therefore, see Lemma \ref{lemma:harmonic_numbers},
 $$
 \E\big(\widetilde{\mathcal{T}}_n\big)=n\int_0^\infty \Big[1-\prod_{p\le n} (1-e^{-t})\Big]\, dt=n\int_0^\infty 1- (1-e^{-t})^{\pi(n)}\, dt=n H_{\pi(n)}\, .
 $$
 The asymptotic size of $\E\big(\widetilde{\mathcal{T}}_n\big)$ may be obtained by appealing to the elementary Lemma \ref{lemma:harmonic_numbers} and to the  standard error bound on the Prime Number Theorem:
 \begin{equation}
 \label{eq:bound_PNT}\Big|\pi(n)-\frac{n}{\ln(n)}\Big|\le C \frac{n}{\ln(n)^2}\, ,
 \end{equation}
  valid for each $n \ge 1$ ($C$ is an absolute constant). We may write:
 \begin{equation}\label{eq:estimate_tiempo_tilde}
 \begin{aligned}
 \E\big(\widetilde{\mathcal{T}}_n\big)=n H_{\pi(n)}&= n \big(\ln(\pi(n))+\gamma+O({\ln(n)}/{n})\big)\\
 &=n\, \ln(n)-n \ln\ln(n) +n \gamma+O\big(n/\ln(n)\big)\,.
 \end{aligned} \end{equation}

We would like to obtain an asymptotic expression like \eqref{eq:estimate_tiempo_tilde}, but for $\mathcal{T}_n$. For that purpose, we introduce the following frequency counting functions of the $\beta_p(n)$:
\begin{equation}
\label{eq:def of omegas}\omega_j(n)=\#\{p\le n: \beta_p(n)=j\}\, , \quad j \ge 1\, .
\end{equation}

A few properties of these $\omega_j$ are in order.
\begin{lemma}\label{lemma:properties_omegas}
\begin{itemize}
\item[a)] $\omega_j(n)=0$, if $j\ge \sqrt{n}$.\\
\item[b)] $\sum_{j=1}^\infty \omega_j(n)=\pi(n)$.\\
\item[c)] For $j \ge 1$ such that $j+1\le\sqrt{n}$,
\begin{equation}
\label{eq:bound for omegaj}
\pi\Big(\frac{n}{j}\Big)-\pi\Big(\frac{n}{j+1}\Big)\le \omega_j(n)\le \pi\Big(\frac{n}{j}\Big)-\pi\Big(\frac{n}{j+1}\Big)+\pi(\sqrt{n}).
\end{equation}
\item[d)] $\sum_{j=1}^\infty j \, \omega_j(n)=\sum_{p \le n} \beta_p(n) \sim n \ln(2)$, as $n \to \infty$. Actually,
\begin{equation}
\label{eq:average_betas}\frac{1}{n}\sum_{p\le n}\beta_p(n)=\ln(2)+O\Big(\frac{1}{\ln(n)}\Big)\, , \quad \text{as} \quad n \to \infty\, .
\end{equation}
\item[e)] For each $j \ge 1$,
\begin{equation}
\label{eq:limit for omegaj}\lim_{n\to\infty} \frac{\omega_j(n)}{\pi(n)}=\frac{1}{j(j+1)}.
\end{equation}
\end{itemize}
\end{lemma}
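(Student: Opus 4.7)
The plan is to handle the five assertions in the order given, since (c) is the structural statement from which the asymptotics in (d) and (e) will ultimately be read off. Parts (a) and (b) are purely algebraic. For (a), I would combine the two defining inequalities $\beta_p(n) < p$ and $\beta_p(n)\cdot p^{\gamma_p(n)}\le n$ with the observation that $p\le n$ forces $\gamma_p(n)\ge 1$ and hence $p^{\gamma_p(n)}\ge p$; so if $\beta_p(n)=j$, then $j^{2} < j\,p \le j\,p^{\gamma_p(n)} \le n$, giving $j<\sqrt{n}$. For (b), the sets $\{p\le n:\beta_p(n)=j\}$ partition the primes $\le n$ as $j$ varies.

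For (c), the crucial structural remark is that if $p>\sqrt{n}$ then $p^{2}>n$, so $\gamma_p(n)=1$ and consequently $\beta_p(n)=\lfloor n/p\rfloor$. Thus, for such primes, $\beta_p(n)=j$ is equivalent to $p\in\bigl(n/(j+1),\,n/j\bigr]$. Under the hypothesis $j+1\le\sqrt{n}$, we have $n/(j+1)\ge\sqrt{n}$, so every prime in that half-open interval automatically exceeds $\sqrt{n}$ and contributes to $\omega_j(n)$; this yields the lower bound. For the upper bound, I would split the primes counted by $\omega_j(n)$ into those above $\sqrt{n}$ (at most $\pi(n/j)-\pi(n/(j+1))$ by the same argument) and those at most $\sqrt{n}$ (trivially at most $\pi(\sqrt{n})$).

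The real work is in (d). After rewriting $\sum_j j\,\omega_j(n)=\sum_{p\le n}\beta_p(n)$, I would isolate the primes $p\le\sqrt{n}$, whose total contribution is bounded by $\pi(\sqrt{n})\cdot\sqrt{n}=O(n/\ln n)$ via the Prime Number Theorem bound \eqref{eq:bound_PNT} (here using $\beta_p(n)<p\le\sqrt{n}$), and concentrate on $\sqrt{n}<p\le n$. For those primes, $\beta_p(n)=\lfloor n/p\rfloor=n/p+O(1)$, so
$$
\sum_{\sqrt{n}<p\le n}\beta_p(n)=n\sum_{\sqrt{n}<p\le n}\frac{1}{p}+O(\pi(n)).
$$
At this point I would invoke Mertens' estimate $\sum_{p\le x}1/p=\ln\ln x+M+O(1/\ln x)$ to conclude $\sum_{\sqrt{n}<p\le n}1/p=\ln\ln n-\ln\ln\sqrt{n}+O(1/\ln n)=\ln 2+O(1/\ln n)$, which combined with the previous estimate yields \eqref{eq:average_betas}. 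This is the step I expect to be the trickiest to state cleanly, since the error analysis must simultaneously absorb the Mertens remainder, the $O(\pi(n))=O(n/\ln n)$ coming from the floor truncation, and the small-prime contribution.

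Finally, for (e), I would divide the two-sided inequality in (c) by $\pi(n)$ and pass to the limit. By the Prime Number Theorem, for any fixed $k$, $\pi(n/k)/\pi(n)\to 1/k$, so the main term tends to $1/j-1/(j+1)=1/(j(j+1))$; meanwhile $\pi(\sqrt{n})/\pi(n)=O(\ln n/\sqrt{n})\to 0$, so both bounds pinch the ratio to the same limit.
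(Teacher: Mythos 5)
Your arguments for parts a), b), c) and e) are correct and essentially coincide with the paper's (your proof of a), deducing $j^2=\beta_p(n)^2<\beta_p(n)\,p\le\beta_p(n)\,p^{\gamma_p(n)}\le n$ in one stroke, is in fact a slightly cleaner packaging of the paper's case split on $\gamma_p(n)=1$ versus $\gamma_p(n)\ge 2$). The genuine divergence is in part d). The paper stays inside the combinatorics of the $\omega_j(n)$: it writes $\sum_{p\le n}\beta_p(n)=\sum_{j\le J} j\,[\pi(n/j)-\pi(n/(j+1))]+O(n/\ln n)$ with $J=\lfloor\sqrt n\rfloor-1$, applies summation by parts to get $\sum_{j\le J}\pi(n/j)$, invokes the PNT bound \eqref{eq:bound_PNT} termwise, and finally evaluates $\sum_{j\le J}\frac{1}{j\ln(n/j)}$ by comparison with $\int_1^{\sqrt n}\frac{dx}{x\ln(n/x)}=\ln 2$. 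You instead bypass the $\omega_j$ entirely: after discarding the primes $p\le\sqrt n$ (contribution $O(\pi(\sqrt n)\sqrt n)=O(n/\ln n)$, using $\beta_p(n)<p$), you write $\beta_p(n)=\lfloor n/p\rfloor=n/p+O(1)$ for $p>\sqrt n$ and reduce everything to Mertens' second theorem, $\sum_{\sqrt n<p\le n}1/p=\ln\ln n-\ln\ln\sqrt n+O(1/\ln n)=\ln 2+O(1/\ln n)$. Both routes deliver the stated $O(1/\ln n)$ error. Yours is shorter and replaces the Abel summation plus integral comparison by a single classical (and elementary, pre-PNT) estimate; the paper's is self-contained given that \eqref{eq:bound_PNT} is already in play for Lemma \ref{lemma:asymptotics_omega1} and makes the role of the frequencies $\omega_j(n)$, which reappear in parts c) and e) and in the integral formula \eqref{eq:formula2 for E(Tn) lcm}, more transparent. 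Your error bookkeeping (Mertens remainder, the $O(\pi(n))$ from the floor truncation, and the small-prime block, each $O(n/\ln n)$) is exactly right as sketched.
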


Observe that part d) means that asymptotically the proportion of useless coupons out of the total of $n$ coupons (the set $\mathcal{C}^*_n$) is  $1-\ln(2)$, about $30\%$.
\begin{proof} Statement a) is equivalent to $\beta_p(n)< \sqrt{n}$. To verify this, observe that from~\eqref{eq:def of gammap(n)} we deduce that $\gamma_p(n)=1$ if $\sqrt{n}<p\le n$. In this range, if $\beta_p(n)\ge\sqrt{n}$, we would get that $p\beta_p(n)>n$, a contradiction with~\eqref{eq:def of betap(n)}.
Whenever $\gamma_p(n)=\alpha\ge 2$, we have that $\beta_p(n)<p\le n^{1/\alpha}\le \sqrt{n}$.

Claim b) is immediate.

c) Let us introduce $J=J_n=\lfloor \sqrt{n}\rfloor-1$ (so that $J+1 \le \sqrt{n}$). For $1 \le j \le J$,  primes $p\le n$ such that $$
\frac{n}{j+1}<p\le \frac{n}{j}
$$
satisfy $\gamma_p(n)=1$ and $\beta_p(n)=j$; all other primes $p\le n$ not included among these~$J$ classes satisfy $p \le \frac{n}{J+1}\le \sqrt{n}$. Thus \eqref{eq:bound for omegaj} follows.

For d), we write
\begin{align*}
\sum_{j=1}^\infty j \, \omega_j(n)&=\sum_{p \le n} \beta_p(n) =\sum_{j=1}^J j \, \Big[\pi\Big(\frac{n}{j}\Big)-\pi\big(\frac{n}{j+1}\big)\Big]+\sum_{p\le \frac{n}{J+1}}\beta_p(n)\\&=\sum_{j=1}^J \pi\Big(\frac{n}{j}\Big)-J\cdot\pi\Big(\frac{n}{J+1}\Big)+\sum_{p\le \frac{n}{J+1}} \beta_p(n)
=
\sum_{j=1}^J \pi\Big(\frac{n}{j}\Big)+O\Big(\frac{n}{\ln(n)}\Big)\, ,
\end{align*}
where we have used summation by parts and the bounds
$$
\sum_{p\le {n}/{(J+1)}} \beta_p(n)\le \frac{n}{J+1} \,\pi\Big(\frac{n}{J+1}\Big)=O\Big(\frac{n}{\ln(n)}\Big) \quad \text{and} \quad
J\pi\Big(\frac{n}{J+1}\Big)=O\Big(\frac{n}{\ln(n)}\Big)\, .$$
Now, from the error bound \eqref{eq:bound_PNT}, we obtain
$$
\sum_{j=1}^J \pi\Big(\frac{n}{j}\Big)=n \sum_{j=1}^J \frac{1}{j \ln(n/j)}+O\Big(\frac{n}{\ln(n)}\Big)\, ,
$$
and, since,
$$
\sum_{j=1}^J \frac{1}{j \ln(n/j)} =\underset{=\ln(2)}{\underbrace{\int_1^{\sqrt{n}} \frac{1}{x \ln(n/x)}\, dx}}+O\Big(\frac{1}{\ln(n)}\Big),
$$
equation \eqref{eq:average_betas} follows.

Finally, \eqref{eq:limit for omegaj} of statement e) follows by dividing~\eqref{eq:bound for omegaj} by~$\pi(n)$ and invoking the Prime Number Theorem.
\end{proof}

The following lemma furnishes some precise asymptotic estimate for the frequency $\omega_1$.
\begin{lemma}
\label{lemma:asymptotics_omega1}
$$
\omega_1(n)=\frac{1}{2}\frac{n}{\ln(n)}+O\Big(\frac{n}{\ln^2(n)}\Big)
$$
\end{lemma}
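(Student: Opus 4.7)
The plan is to use the bounds from part c) of Lemma \ref{lemma:properties_omegas} specialized to $j=1$ (which requires only $2\le \sqrt{n}$, hence $n\ge 4$), namely
\[
\pi(n)-\pi(n/2)\;\le\;\omega_1(n)\;\le\;\pi(n)-\pi(n/2)+\pi(\sqrt{n}),
\]
so that $\omega_1(n)=\pi(n)-\pi(n/2)+O(\pi(\sqrt{n}))$. The term $\pi(\sqrt{n})$ is at most $\sqrt{n}$, which is absorbed into $O(n/\ln^2(n))$ with room to spare, so all the work is in estimating the difference $\pi(n)-\pi(n/2)$.

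For this, I would invoke the version of the Prime Number Theorem recorded in~\eqref{eq:bound_PNT}, namely $\pi(n)=n/\ln(n)+O(n/\ln^2(n))$. Applying it to $n$ and to $n/2$ gives
\[
\pi(n)-\pi(n/2)\;=\;\frac{n}{\ln(n)}-\frac{n/2}{\ln(n)-\ln(2)}+O\Big(\frac{n}{\ln^2(n)}\Big).
\]
Expanding $\dfrac{1}{\ln(n)-\ln(2)}=\dfrac{1}{\ln(n)}\Big(1+O\big(\tfrac{1}{\ln(n)}\big)\Big)$ yields
\[
\frac{n/2}{\ln(n)-\ln(2)}=\frac{n}{2\ln(n)}+O\Big(\frac{n}{\ln^2(n)}\Big),
\]
and therefore $\pi(n)-\pi(n/2)=\dfrac{n}{2\ln(n)}+O\big(n/\ln^2(n)\big)$. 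Combining with the trivial bound on $\pi(\sqrt{n})$ gives the claim.

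The only subtlety is to verify that the interval $(n/2,n]$ is genuinely the range producing the primes with $\beta_p(n)=1$: for such primes one needs $\gamma_p(n)=1$ and $\beta_p(n)=1$, i.e.\ $p\le n<2p$, which is exactly $n/2<p\le n$ (with at most the finitely many prime powers $p\le\sqrt{n}$ possibly contributing extra terms, accounted for by the $\pi(\sqrt{n})$ slack in~\eqref{eq:bound for omegaj}). I expect no real obstacle: this is a direct specialization of the already-proved frequency bounds plus a one-line PNT expansion.
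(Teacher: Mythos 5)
Your proposal is correct and follows essentially the same route as the paper: specialize \eqref{eq:bound for omegaj} to $j=1$, estimate $\pi(n)-\pi(n/2)$ via the Prime Number Theorem with the error bound \eqref{eq:bound_PNT}, and absorb the $\pi(\sqrt{n})$ slack into $O\big(n/\ln^2(n)\big)$. The only difference is that you write out the expansion of $1/(\ln(n)-\ln(2))$ explicitly, which the paper leaves as ``readily gives.''
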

\begin{proof} The error bound \eqref{eq:bound_PNT} readily gives that
$$
\pi(n)-\pi\Big(\frac{n}{2}\Big)=\frac{1}{2}\frac{n}{\ln(n)}+O\Big(\frac{n}{\ln^2(n)}\Big)\, ;
$$
the result follows from \eqref{eq:bound for omegaj} (for $j=1$) and the bound $\pi(\sqrt{n})=O\Big(\frac{\sqrt{n}}{\ln(n)}
\Big)$.
\end{proof}

We are now ready to estimate $\E(\mathcal{T}_n)$. We start by rewritting formula \eqref{eq:formula for E(Tn) lcm} as:
\begin{equation}
\label{eq:formula2 for E(Tn) lcm}
\E(\mathcal{T}_n)=n\int_0^\infty \Big[1-\prod_{j< \sqrt{n}} (1-e^{-tj})^{\omega_j(n)})\big]\, dt.
\end{equation}

By keeping just the factor corresponding to $j=1$ in \eqref{eq:formula2 for E(Tn) lcm}, we obtain the following lower bound:
\begin{equation}
\label{eq:lower bound of E(Tn) lcm}
\E(\mathcal{T}_n)\ge n\int_0^\infty \big[1-(1-e^{-t})^{\omega_1(n)}\big]\, dt=n\, H_{\omega_1(n)}\,,
\end{equation}
where we have resorted to Lemma \ref{lemma:harmonic_numbers}.

For an upper bound: using that $e^{-jt}\le e^{-2t}$ for $j\ge 2$, that $\sum \omega_j(n)=\pi(n)$ and the identity $1-xy=1-x+x(1-y)$, we may bound
\begin{align*}
&\E(\mathcal{T}_n)\le n\int_0^\infty \big[1-(1-e^{-t})^{\omega_1(n)}\, (1-e^{-2t})^{\pi(n)-\omega_1(n)}\big]\, dt
\\
&  \ =n\int_0^\infty \big[1-(1-e^{-t})^{\omega_1(n)}\big]\, dt+ n\int_0^\infty (1-e^{-t})^{\omega_1(n)}\, \big[1-(1-e^{-2t})^{\pi(n)-\omega_1(n)}\big]\, dt
\\
& \    =n\,H_{\omega_1(n)}+ n\int_0^\infty (1-e^{-t})^{\omega_1(n)}\, \big[1-(1-e^{-2t})^{\pi(n)-\omega_1(n)}\big]\, dt.
\end{align*}
Now, since $1-x^\delta \le \delta (1-x)$, for $0\le x\le 1$ and $\delta >0$, we may further bound
$$
\begin{aligned}
\E(\mathcal{T}_n)&\le n\,H_{\omega_1(n)}+ n\big(\pi(n)-\omega_1(n)\big)\int_0^\infty (1-e^{-t})^{\omega_1(n)}\, e^{-2t}\, dt\\
&=n\,H_{\omega_1(n)}+ n\frac{\pi(n)-\omega_1(n)}{(\omega_1(n)+1)(\omega_1(n)+2)}
\le n\,H_{\omega_1(n)}+ n\frac{\pi(n)-\omega_1(n)}{\omega_1(n)^2}
\end{aligned}
$$

We have proved:
\begin{theorem}
\label{theor:estimate for E(Tn) lcm with omega}The mean value of $\mathcal{T}_n$ satisfies:
\begin{equation}
\label{eq:estimate for E(Tn) lcm with omega}
n\,H_{\omega_1(n)}\le \E(\mathcal{T}_n)\le n\,H_{\omega_1(n)}+n\, \frac{\pi(n)-\omega_1(n)}{\omega_1(n)^2}.
\end{equation}
\end{theorem}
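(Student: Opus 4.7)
The plan is to work directly from the integral representation \eqref{eq:formula2 for E(Tn) lcm}, namely
\[
\E(\mathcal{T}_n)=n\int_0^\infty \Big[1-\prod_{j<\sqrt{n}} (1-e^{-tj})^{\omega_j(n)}\Big]\,dt,
\]
and obtain the two bounds by comparing the product inside the brackets to a simpler quantity isolating the $j=1$ contribution (which, by Lemma~\ref{lemma:asymptotics_omega1}, is the dominant one).

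For the lower bound, I would simply use that every factor $(1-e^{-tj})^{\omega_j(n)}$ lies in $[0,1]$, so dropping the factors with $j\ge 2$ only enlarges the product and therefore shrinks the bracket. Hence
\[
\E(\mathcal{T}_n)\ge n\int_0^\infty \big[1-(1-e^{-t})^{\omega_1(n)}\big]\,dt = n\,H_{\omega_1(n)},
\]
where the evaluation of the integral follows from Lemma~\ref{lemma:harmonic_numbers} applied with $a=\omega_1(n)$ (which is an integer, so the integral is exactly $H_{\omega_1(n)}$).

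For the upper bound, the idea is to replace every factor with $j\ge 2$ by the common lower bound $(1-e^{-2t})^{\omega_j(n)}$ (valid since $e^{-tj}\le e^{-2t}$ for $j\ge 2$), and then collect exponents using $\sum_{j\ge 2}\omega_j(n)=\pi(n)-\omega_1(n)$. This gives
\[
\prod_{j<\sqrt{n}}(1-e^{-tj})^{\omega_j(n)}\ge (1-e^{-t})^{\omega_1(n)}(1-e^{-2t})^{\pi(n)-\omega_1(n)}.
\]
Using the algebraic identity $1-xy=(1-x)+x(1-y)$ to split the resulting integrand, the integral of the first piece again produces $n H_{\omega_1(n)}$, and the cross term becomes
\[
n\int_0^\infty (1-e^{-t})^{\omega_1(n)}\big[1-(1-e^{-2t})^{\pi(n)-\omega_1(n)}\big]\,dt.
\]
To control this remainder, I would apply the elementary bound $1-x^\delta\le\delta(1-x)$ (for $0\le x\le 1$, $\delta>0$) with $x=1-e^{-2t}$ and $\delta=\pi(n)-\omega_1(n)$, reducing the task to the explicit integral $\int_0^\infty(1-e^{-t})^{\omega_1(n)}e^{-2t}\,dt$. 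The substitution $u=1-e^{-t}$ turns it into a Beta integral equal to $1/[(\omega_1(n)+1)(\omega_1(n)+2)]$, which I bound by $1/\omega_1(n)^2$, yielding the stated error term $n(\pi(n)-\omega_1(n))/\omega_1(n)^2$.

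The only real subtlety is choosing the right separator between the ``dominant'' $j=1$ term and the rest: keeping merely $j=1$ is too crude for an upper bound (the tail needs some decay), while retaining all $j$ makes the combinatorics intractable. Replacing the tail by the uniform envelope $(1-e^{-2t})^{\pi(n)-\omega_1(n)}$ strikes the balance — it preserves enough exponential decay to make the Beta-type integral converge at a rate $1/\omega_1(n)^2$, which is comfortably smaller than the main term $H_{\omega_1(n)}$ when combined with the asymptotics of $\omega_1(n)$ from Lemma~\ref{lemma:asymptotics_omega1}.
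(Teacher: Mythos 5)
Your proposal is correct and follows essentially the same route as the paper: keep only the $j=1$ factor for the lower bound, replace the $j\ge 2$ factors by the envelope $(1-e^{-2t})^{\pi(n)-\omega_1(n)}$ for the upper bound, split with $1-xy=(1-x)+x(1-y)$, apply $1-x^\delta\le\delta(1-x)$, and evaluate the resulting Beta integral as $1/[(\omega_1(n)+1)(\omega_1(n)+2)]\le 1/\omega_1(n)^2$. Nothing to add.
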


Finally,
\begin{corollary}\label{cor:asymptotics_waitLCM}
$$
\E(\mathcal{T}_n)=n\ln(n)-n \ln\ln(n)+n \big(\gamma-\ln(2)\big)+O\big(n/\ln(n)\big)\,.
$$
\end{corollary}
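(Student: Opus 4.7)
The plan is to insert the sandwich bound of Theorem \ref{theor:estimate for E(Tn) lcm with omega} into the asymptotic for $H_{\omega_1(n)}$ provided by Lemma \ref{lemma:harmonic_numbers}, using the estimate for $\omega_1(n)$ furnished by Lemma \ref{lemma:asymptotics_omega1}, and then verify that the upper-bound remainder $n(\pi(n)-\omega_1(n))/\omega_1(n)^2$ is absorbed in the advertised error $O(n/\ln(n))$.

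First I would rewrite the estimate from Lemma \ref{lemma:asymptotics_omega1} in the convenient multiplicative form
\[
\omega_1(n)=\frac{n}{2\ln(n)}\Big(1+O\Big(\frac{1}{\ln(n)}\Big)\Big),
\]
so that taking logarithms and using $\ln(1+x)=O(x)$ for small $x$ gives
\[
\ln(\omega_1(n))=\ln(n)-\ln\ln(n)-\ln(2)+O\Big(\frac{1}{\ln(n)}\Big).
\]
Lemma \ref{lemma:harmonic_numbers} applied at $a=\omega_1(n)$ (which tends to infinity with $n$) then yields
\[
H_{\omega_1(n)}=\ln(n)-\ln\ln(n)+\gamma-\ln(2)+O\Big(\frac{1}{\ln(n)}\Big),
\]
since the correction $O(1/\omega_1(n))$ is of order $O(\ln(n)/n)$, which is much smaller. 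Multiplying by $n$ produces the main term
\[
n\,H_{\omega_1(n)}=n\ln(n)-n\ln\ln(n)+n(\gamma-\ln(2))+O\Big(\frac{n}{\ln(n)}\Big).
\]

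For the upper bound, I need to show that $n(\pi(n)-\omega_1(n))/\omega_1(n)^2$ is also $O(n/\ln(n))$. By the Prime Number Theorem with error bound \eqref{eq:bound_PNT} and Lemma \ref{lemma:asymptotics_omega1},
\[
\pi(n)-\omega_1(n)=\frac{n}{\ln(n)}-\frac{n}{2\ln(n)}+O\Big(\frac{n}{\ln^2(n)}\Big)=\frac{n}{2\ln(n)}+O\Big(\frac{n}{\ln^2(n)}\Big),
\]
and $\omega_1(n)^2\sim n^2/(4\ln^2(n))$. Hence
\[
n\,\frac{\pi(n)-\omega_1(n)}{\omega_1(n)^2}=O(\ln(n))=O\Big(\frac{n}{\ln(n)}\Big).
\]

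Combining both bounds through the sandwich \eqref{eq:estimate for E(Tn) lcm with omega} of Theorem \ref{theor:estimate for E(Tn) lcm with omega} yields the claimed asymptotic. There is no real obstacle here: the theorem does the heavy lifting of reducing the waiting-time expectation to a harmonic number, and the only arithmetic issue is verifying that the tails of $\omega_j(n)$ for $j\ge 2$, already absorbed in the upper bound of Theorem \ref{theor:estimate for E(Tn) lcm with omega}, contribute a lower order term. The only mildly delicate point is making sure that the $-\ln(2)$ survives at the right place: it comes entirely from the factor $1/2$ inside $\omega_1(n)\sim n/(2\ln(n))$ (which is itself the PNT-based count of primes in $(n/2,n]$), and not from the mean $\ln(2)$ of $\beta_p(n)/n$ appearing in part d) of Lemma \ref{lemma:properties_omegas}, even though these two manifestations of $\ln(2)$ are reflections of the same prime-counting phenomenon.
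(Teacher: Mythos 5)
Your proposal is correct and follows essentially the same route as the paper: feed Lemma \ref{lemma:asymptotics_omega1} into Lemma \ref{lemma:harmonic_numbers} to expand $n\,H_{\omega_1(n)}$, check that the upper-bound remainder $n(\pi(n)-\omega_1(n))/\omega_1(n)^2$ is $O(\ln(n))$, hence negligible, and conclude via the sandwich of Theorem \ref{theor:estimate for E(Tn) lcm with omega}. The only cosmetic difference is that you estimate $\pi(n)-\omega_1(n)$ directly from the Prime Number Theorem, where the paper invokes the limit \eqref{eq:limit for omegaj} for $j=1$; both give the same bound.
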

\begin{proof}
Using Lemma \ref{lemma:harmonic_numbers}, the limit \eqref{eq:limit for omegaj}, for $j=1$, and Lemma \ref{lemma:asymptotics_omega1}, we have that
\begin{align*}
&H_{\omega_1(n)}=\ln(\omega_1(n))+\gamma+O\Big(\frac{\ln(n)}{n}\Big)\\
&\qquad=\ln\Big(\frac{1}{2}\frac{n}{\ln(n)}\Big)+\gamma+O\Big(\frac{1}{\ln(n)}\Big)+O\Big(\frac{\ln(n)}{n}\Big)
=\ln\Big(\frac{1}{2}\frac{n}{\ln(n)}\Big)+\gamma+O\Big(\frac{1}{\ln(n)}\Big)\, .
\end{align*}
Also, from the limit \eqref{eq:limit for omegaj}, for $j=1$, we deduce
$$
\frac{\pi(n)-\omega_1(n)}{\omega_1(n)^2}=O\Big(\frac{\ln(n)}{n}\Big)\, .
$$
Combining these two estimates, we get the result.
\end{proof}

Observe that, as a consequence of Corollary \ref{cor:asymptotics_waitLCM} and \eqref{eq:estimate_tiempo_tilde}, we deduce that
\begin{equation}
\label{eq:comparing_con_tiempotilde}
\E(\widetilde{\mathcal{T}}_n)-\E(\mathcal{T})=n \ln(2)+O\Big(\frac{n}{\ln(n)}\Big)\, .
\end{equation}

\end{document}